\definecolor{shadecolor}{gray}{0.875}
\newtheorem{thrm}{Theorem}[section]
\newtheorem{lem}[thrm]{Lemma}
\newtheorem{cor}[thrm]{Corollary}
\newtheorem{prop}[thrm]{Proposition}
\newtheorem{conj}[thrm]{Conjecture}
\newtheorem*{conjm}{Movable Strong Conjecture (MSC)}
\newtheorem*{conjmm}{Movable Weak Conjecture (MWC)}
\theoremstyle{definition}
\newtheorem{defn}[thrm]{Definition}
\newtheorem{exmple}[thrm]{Example}
\newtheorem{rmk}[thrm]{Remark}
\newtheorem{ques}[thrm]{Question}
\newtheorem{constr}[thrm]{Construction}
\newtheorem{conv}[thrm]{Convention}
\newtheorem{claim}[thrm]{Claim}
\DeclareMathOperator{\Sym}{Sym}
\DeclareMathOperator{\Eff}{\overline{Eff}}
\DeclareMathOperator{\Nef}{Nef}
\DeclareMathOperator{\Mov}{\overline{Mov}}
\DeclareMathOperator{\mob}{mob}
\DeclareMathOperator{\bpf}{BPF}
\DeclareMathOperator{\contr}{contr}
\begin{document}

\title{Morphisms and faces of pseudo-effective cones}

\author{Mihai Fulger}
\address{Department of Mathematics, Princeton University\\
Princeton, NJ \, \, 08544}
\address{Institute of Mathematics of the Romanian Academy, P. O. Box 1-764, RO-014700,
Bucharest, Romania}
\email{afulger@princeton.edu}

\author{Brian Lehmann}
\thanks{The second author is supported by NSF Award 1004363.}
\address{Department of Mathematics, Boston College  \\
Chestnut Hill, MA \, \, 02467}
\email{lehmannb@bc.edu}

\begin{abstract}
Let $\pi: X \to Y$ be a morphism of projective varieties and suppose that $\alpha$ is a pseudo-effective numerical cycle class satisfying $\pi_{*}\alpha=0$. A conjecture of Debarre, Jiang, and Voisin predicts that $\alpha$ is a limit of classes of effective cycles contracted by $\pi$.  We establish new cases of the conjecture for higher codimension cycles. In particular we prove a strong version when $X$ is a fourfold and $\pi$ has relative dimension one.
\end{abstract}


\maketitle



\section{Introduction}

Let $\pi: X \to Y$ be a morphism of projective varieties over an algebraically closed field.  The pushforward of cycles induces a map $\pi_{*}: N_{k}(X) \to N_{k}(Y)$ on the groups of $\mathbb{R}$-cycles modulo numerical equivalence, and one would like to understand how $\ker(\pi_{*})$ reflects the geometry of the map $\pi$.  In the special case when $\alpha \in N_{k}(X)$ is the class of a closed subvariety $Z$, then $\alpha$ lies in the kernel of $\pi_{*}$ precisely when $\dim \pi(Z) < \dim(Z)$. A similar statement holds when $\alpha$ is the class of an effective cycle. However, the geometry of arbitrary elements of $\ker(\pi_{*})$ is more subtle.

An important idea of \cite{djv13} is that this geometric interpretation of elements of $\ker(\pi_{*})$ should be extended beyond the effective classes.  Recall that the pseudo-effective cone $\Eff_{k}(X)$ is the closure of the cone in $N_{k}(X)$ generated by the classes of effective $k$-cycles. The following is the numerical analogue of the homological statement in \cite{djv13}.

\begin{conj}\label{conj:strongweakconj}
Let $\pi: X \to Y$ be a morphism of projective varieties over an algebraically closed field.  Suppose that $\alpha \in \Eff_{k}(X)$ satisfies $\pi_{*}\alpha = 0$. Then
\begin{itemize}
\item[] \textbf{Weak Conjecture:} $\alpha$ is in the vector space generated by $k$-dimensional subvarieties that are contracted by $\pi$. 
\item[] \textbf{Strong Conjecture:} $\alpha$ is in the closure of the cone generated by $k$-dimensional subvarieties that are contracted by $\pi$.
\end{itemize}
\end{conj}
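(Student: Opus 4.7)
My plan is to prove the Strong Conjecture, which implies the Weak. Using Stein factorization I would first reduce to $\pi$ surjective with connected fibers. The range $k > \dim Y$ is trivial since then every $k$-subvariety of $X$ is automatically contracted, so the conjecture reduces to $\Eff_k(X)$ equaling its own closure. Thus the work is in the range $k \le \dim Y$. I would also attempt to reduce to the case that $\pi$ is equidimensional, by blowing up source and target; this requires verifying that both conjectures are stable under birational pullback, which should follow because the introduced exceptional components are themselves contracted over $Y$.

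\textbf{Limit argument via horizontal/vertical splitting.} Write $\alpha = \lim_n \alpha_n$ with each $\alpha_n$ an effective cycle, and decompose $\alpha_n = \alpha_n^v + \alpha_n^h$ into its \emph{vertical} components (contracted by $\pi$) and \emph{horizontal} components (dominating their image under $\pi$). By construction $\pi_*\alpha_n^v = 0$, so $\pi_*\alpha_n^h \to 0$ in $N_k(Y)$. The goal is to extract a subsequential numerical limit $\beta$ of the $\alpha_n^h$. If such a $\beta$ exists, then $\beta \in \Eff_k(X)$ with $\pi_*\beta = 0$, and $\alpha - \beta = \lim \alpha_n^v$ already lies in the closure of the cone generated by contracted subvarieties. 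One then treats $\beta$ separately, ideally via Noetherian induction on $\dim X$: the limit $\beta$ should be supported on the locus where $\pi$ fails to be equidimensional, allowing one to invoke the conjecture inductively for the restriction of $\pi$ to this proper closed subscheme.

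\textbf{The main obstacle.} The hard step is the compactness of $\{\alpha_n^h\}$. When $\pi$ has relative dimension one, a horizontal $k$-cycle meets each fiber in finitely many points, and I would try to bound its intersection number against $\pi^* A^{\dim Y - k}$, for $A$ ample on $Y$, by $\pi_*\alpha_n^h \cdot A^{\dim Y - k} \to 0$. Relative Hilbert or Chow schemes over $Y$ should then produce a convergent subsequence, with any components that run off to the boundary of the moduli space themselves being limits of contracted cycles. This appears to be what makes the paper's fourfold, relative-dimension-one case tractable. In higher relative dimension, by contrast, horizontal effective cycles of codimension greater than one can be unbounded even when their pushforwards are controlled: movement within a fiber is invisible to $\pi_*$, and a horizontal class can bifurcate into a vertical piece plus a horizontal piece of smaller numerical class in an uncontrolled way. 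Breaking this difficulty seems to require a new compactness mechanism beyond relative Chow varieties, and I expect it to be the decisive obstruction to proving the conjecture in full generality.
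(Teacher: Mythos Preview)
First, note that the statement is a \emph{conjecture}: the paper does not prove it in general, and neither do you. You correctly flag the approach as incomplete, but you misdiagnose the obstruction.

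Your compactness worry is misplaced. The horizontal parts $\alpha_n^h$ are automatically bounded: since $0 \preceq \alpha_n^h \preceq \alpha_n$ and $\alpha_n \to \alpha$, intersecting against a fixed ample class $A^k$ on $X$ shows $\alpha_n^h \cdot A^k$ is bounded, and for pseudo-effective classes this bounds the sequence in $N_k(X)$ (this is exactly the mechanism the paper uses to show that extremal classes with non-pseudo-effective intersection against a Cartier divisor are pushed from that divisor). So a subsequential limit $\beta$ always exists; no relative Hilbert scheme is needed.

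The genuine gap is the next step. You assert that $\beta$ ``should be supported on the locus where $\pi$ fails to be equidimensional,'' but after your own reduction to $\pi$ equidimensional that locus is empty, so your claim forces $\beta = 0$. This is false already for the first projection $\pi\colon\mathbb{P}^1 \times \mathbb{P}^1 \to \mathbb{P}^1$: with $F_1$ the fiber class and $F_2$ a section class, take $\alpha = F_1$ and approximate it by $\alpha_n = \tfrac{1}{n}[C_n]$ with $C_n$ an irreducible curve of class $nF_1 + F_2$. Each $C_n$ is horizontal, so $\alpha_n^v = 0$ and $\alpha_n^h = \alpha_n$; then $\pi_*\alpha_n^h \to 0$ but $\beta = F_1 = \alpha \ne 0$. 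You have reduced the conjecture to itself: $\beta$ lands back in $\Eff_k(X) \cap \ker \pi_*$ with no additional structure, and the horizontal/vertical splitting has made no progress.

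The paper's strategy is orthogonal to yours. Rather than splitting cycles along $\pi$, it splits $\alpha$ into a \emph{movable} piece and a \emph{rigid} piece. Rigid ($\pi$-exceptional) classes are shown to be pushed forward from a fixed proper subscheme by producing an effective Cartier divisor $E$ with $E \cdot \alpha$ not pseudo-effective; this is where a genuine Noetherian induction enters. Movable classes are handled by restricting to auxiliary subvarieties on which the class becomes a movable \emph{curve} class, then invoking the known curve case. Neither half resembles a horizontal/vertical limit argument, and the obstruction the paper leaves open is not compactness but rather the absence of a structure theorem for the non-exceptional, non-movable range.
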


\noindent The improvement to \emph{pseudo}-effective classes is crucial for understanding the geometry of $\pi$.  For example, the interplay between morphisms from $X$ and faces of the Mori cone $\overline{\rm NE}(X)=\Eff_1(X)$ is an essential tool in birational geometry.  The Strong Conjecture predicts that for higher dimensional cycles there is still a distinguished way of constructing a face of $\Eff_{k}(X)$ from a morphism $\pi$, allowing us to deduce geometric facts from intersection theory.  The first cases of the conjecture were settled by \cite[Theorem 1.4]{djv13} which proves them for divisor classes and curve classes over $\mathbb{C}$.




The Strong Conjecture for $\pi: X \to Y$ predicts that effective classes are dense in $\ker\pi_*\cap\Eff_{k}(X)$.  It is then not surprising that the Strong Conjecture has close ties with other well-known problems predicting the existence of special cycles, as in the following example.

\begin{exmple}
Let $S$ be a smooth complex surface with $q = p_{g} = 0$.  Let $\Delta$ denote the diagonal on $S \times S$ and let $F_{1}$ and $F_{2}$ be the fibers of the projections $\pi_{1}$ and $\pi_{2}$.  Bloch's conjecture predicts that the diagonal $\Delta  - F_{1}$ is $\mathbb Q$-\emph{rationally} equivalent to a sum of cycles that are contracted by $\pi_{2}$.  In contrast, the Weak Conjecture (applied to the morphism $\pi_{2}$) predicts that $\Delta - F_{1}$ is \emph{numerically} equivalent to a sum of cycles that are contracted by $\pi_{2}$.  In this case the Weak Conjecture for $\pi_{2}$ can be verified by Hodge Theory so that $S$ admits a ``numerical'' diagonal decomposition.  

More generally, suppose that $X$ is a smooth complex variety satisfying $H^{i,0}(X) = 0$ for $i>0$. Then the Strong Conjecture for currents has implications for the Generalized Hodge Conjecture on $X \times X$ when applied to the projection maps.
This is discussed in more detail in \cite[\S6]{djv13}.\qed
\end{exmple}

As our main result, we prove the Strong Conjecture for arbitrary classes when $X$ is a fourfold and $\pi$ has relative dimension one. (This is a special case of the more general results described below.) The proof for surface classes involves new concepts and techniques concerning the positivity of higher (co)dimension cycles.  The basic principle underlying our work is that it is best to consider the Strong and Weak conjectures separately for ``movable'' classes and ``rigid'' classes.  This is motivated by the proof of the divisor case in \cite{djv13}, which relies on the $\sigma$-decomposition of \cite{nakayama04} in a fundamental way.

We first discuss the Strong Conjecture for movable classes.  In \cite{fl14} we introduced the movable cone of $k$-cycles $\Mov_{k}(X)$ which is the closure in $N_k(X)$ of the cone generated by classes of effective cycles that deform in irreducible families which cover $X$. Since movable cycles deform to cover all of $X$, morally they should not reflect the pathologies of special fibers of $\pi$. Thus it should be easier to settle the Strong and Weak Conjectures for movable classes, and in fact, the conjectures in \S\ref{s:mc} predict stronger statements. A result in this direction is the following

\begin{thrm}[cf. \ref{cor:mscalmostexc}] \label{firstmovthrm}
Let $\pi: X \to Y$ be a morphism of projective varieties over $\mathbb{C}$ of relative dimension $e$.  Fix an ample divisor $H$ on $Y$.  Suppose that $\alpha \in \Mov_{k}(X)$ for some $k \geq e$.  If $\alpha$ satisfies
\begin{equation*}
\alpha \cdot \pi^{*}H^{k-e+1}=0,
\end{equation*}
which in particular implies that $\pi_*\alpha=0$,
then the Strong Conjecture holds for $\alpha$. In particular the Strong Conjecture holds for all movable classes when $e=1$.
\end{thrm}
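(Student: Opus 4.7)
The plan is to realize $\alpha$ as a limit of positive combinations of classes of cycles drawn from covering families of $X$, and then to separate these contributions according to whether the general member of the family is contracted or horizontal with respect to $\pi$.

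First I would use the definition of $\Mov_k(X)$ to write $\alpha=\lim_n\sum_i c_{i,n}[Z_{i,n}]$ with $c_{i,n}\ge 0$ and each $Z_{i,n}$ a general member of an irreducible family of effective $k$-cycles covering $X$. At each stage $n$ I partition the indices according to whether the general member is vertical ($\dim\pi(Z_{i,n})<k$, i.e.\ contracted by $\pi$) or horizontal ($\dim\pi(Z_{i,n})=k$), and write $\alpha_n=V_n+H_n$ accordingly. The vertical sums $V_n$ already lie in the cone generated by contracted $k$-dimensional subvarieties, so it suffices to show that $H_n\to 0$ in $N_k(X)$.

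The key intersection-theoretic input is that for horizontal $Z$ the projection formula gives
\[
\pi_{*}\bigl([Z]\cdot\pi^{*}H^{k-e+1}\bigr)=\deg(\pi|_Z)\cdot[\pi(Z)]\cdot H^{k-e+1},
\]
which is a nonzero pseudo-effective class of dimension $e-1$ on $Y$, while for vertical $Z$ this pushforward is zero (if $\dim\pi(Z)\le k-e$) or still pseudo-effective (if $k-e<\dim\pi(Z)<k$). Fixing an ample class $A$ on $X$ and letting $\gamma:=\pi^{*}H^{k-e+1}\cdot A^{e-1}\in N^{k}(X)$, one checks that $[Z]\cdot\gamma\ge 0$ for every effective $k$-cycle and $[Z]\cdot\gamma>0$ for horizontal $Z$ (by ampleness of $H$ and of $A$ applied to the nonzero effective $(e-1)$-cycle above). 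The hypothesis $\alpha\cdot\pi^{*}H^{k-e+1}=0$ gives $\alpha\cdot\gamma=0$, so $V_n\cdot\gamma+H_n\cdot\gamma\to 0$ with both terms nonnegative; hence $H_n\cdot\gamma\to 0$.

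The main obstacle, and the heart of the argument, is promoting this vanishing of a pairing to vanishing of $H_n$ as classes in $N_k(X)$. My plan is to show that horizontal generators admit a uniform positive lower bound $[Z]\cdot\gamma\ge c\cdot\|[Z]\|$ modulo the vertical face, where $\|\cdot\|$ is an ample-chosen norm on the movable cone. Concretely, one normalizes each horizontal generator by its $\gamma$-value, shows that the resulting slice inside $\Mov_k(X)$ has compact closure (using that horizontal classes push forward to genuine $k$-dimensional effective cycles on $Y$ with positive $H^{k-e+1}$-degree, so the pushforward is controlled, and then lifts this control back to $X$ modulo the kernel of $\pi_{*}$, which is itself contained in the closure of the contracted cone by the lower-codimension cases of the conjecture), and concludes by convex compactness that $H_n\to 0$. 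This is where the assumption $\alpha\in\Mov_k(X)$ (rather than merely pseudo-effective) is essential: movability prevents the $H_n$ from escaping to rigid ``noise'' that evades the pairing with $\gamma$.

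The case $e=1$ is appreciably cleaner because $\pi^{*}H^{k-e+1}=\pi^{*}H^{k}$ has codimension equal to $k$, so $[Z]\cdot\pi^{*}H^{k}$ is already a nonnegative real number, equal to zero exactly when $Z$ is contracted; and the ``in particular'' statement then follows since $\pi_{*}\alpha=0$ together with the projection formula forces $\alpha\cdot\pi^{*}H^{k}=(\pi_{*}\alpha)\cdot H^{k}=0$, so every movable class in $\ker\pi_{*}$ already satisfies the hypothesis of the theorem.
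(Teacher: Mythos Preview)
Your decomposition $\alpha_n=V_n+H_n$ into vertical and horizontal parts is natural, and the observation that $H_n\cdot\gamma\to 0$ is correct. The gap is in the passage from $H_n\cdot\gamma\to 0$ to $H_n\to 0$. Your compactness step asserts that the normalized horizontal classes $[Z]/([Z]\cdot\gamma)$ form a bounded set in $N_k(X)$, and you justify this by saying the pushforward to $Y$ is controlled and then ``lifting this control back to $X$ modulo the kernel of $\pi_*$, which is itself contained in the closure of the contracted cone by the lower-codimension cases of the conjecture.'' But the containment $\ker\pi_*\cap\Eff_k(X)\subset\Eff_k(\pi)$ is precisely the Strong Conjecture for $k$-cycles; there is no induction on codimension here that would supply it. Even granting boundedness, after passing to a subsequence you obtain $V_n\to V\in\Eff_k(\pi)$ and $H_n\to H$, and all you know about $H$ is that it is movable with $\pi_*H=0$ --- exactly the hypothesis you began with. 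So the argument is circular.

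The paper's proof takes a completely different route: it does not attempt to separate vertical from horizontal at all. Instead it proves the structural result (Theorem \ref{thrm:almostexceptionalmsc}) that after passing to a flattening $\pi':X'\to Y'$ with $Y'$ smooth, there exists $\beta\in\Mov_{k-e}(Y')$ with $f_{X*}\pi'^{*}\beta=\alpha$. The mechanism is to restrict $\alpha$ to suitable $(n-k+e)$-dimensional subvarieties $V\subset X$ arising from basepoint free families (where the restriction stays movable by Corollary \ref{cor:movrestricttovgbpf}), observe that $\alpha|_V$ is then a movable $e$-class on $V$ killed by $\pi^*H$, and invoke the special case $k=e$ (Corollary \ref{cor:fiberspecialcase}, ultimately resting on Peternell's theorem for movable curves) to see that $\alpha|_V$ is proportional to a fiber class. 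A finite-pullback lemma then assembles these proportionality constants into the class $\beta$ on $Y'$. Once $\alpha=f_{X*}\pi'^{*}\beta$, approximating $\beta$ by strictly movable cycles and flat-pulling back gives contracted movable cycles limiting to $\alpha$. This is the missing idea: rather than discarding a horizontal remainder, one shows $\alpha$ is \emph{globally} a pulled-back class.
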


For perspective, note that when $\alpha$ is a movable class satisfying $\alpha \cdot \pi^{*}H^{k-e}=0$, then $\alpha=0$ by Theorem \ref{firstexcthrm} below.  Theorem \ref{firstmovthrm} handles one additional step.  

A crucial technical step is to improve our understanding of the ``dual positive classes'' defined in \cite{fl13}.  With this improvement, the proof technique is similar to \cite{lehmann11}, where the second author proves the analogous theorem for divisors.  In fact, we prove a somewhat stronger statement, allowing us to prove many cases of the Strong Conjecture for fourfolds (see Corollary \ref{cor:scfourfolds}).

We next discuss the Strong and Weak Conjectures for ``rigid'' classes.   To capture the notion of rigidity, we use the Zariski decomposition for numerical cycle classes introduced by the authors in \cite{fl15}. A Zariski decomposition of a pseudo-effective class $\alpha$ is an expression
\begin{equation*}
\alpha = P(\alpha) + N(\alpha)
\end{equation*}
where $P(\alpha)$ is a movable class that retains all the ``positivity'' of $\alpha$ and $N(\alpha)$ is \emph{pseudo}-effective.  (This decomposition is an analogue of the $\sigma$-decomposition of \cite{nakayama04}.)  \cite[Conjecture 5.19]{fl14} predicts that any negative part $N(\alpha)$ is the pushforward of a pseudo-effective class from a proper subscheme of $X$.  By definition it is a limit of classes of effective cycles, and the difficulty is showing that such cycles are contained in a fixed proper closed subset of $X$.  We establish this conjecture in a special case:

\begin{defn}
Let $\pi: X \to Y$ be a dominant morphism of projective varieties of relative dimension $e$.  Suppose that $\alpha \in \Eff_{k}(X)$.  We say that $\alpha$ is $\pi$-\emph{exceptional} if there is an ample divisor  $H$ on $Y$ such that $\alpha \cdot \pi^{*}H^{r} = 0$ for some $r \leq k-e$.
\end{defn}

When $\alpha$ is the class of a subvariety $Z$, this definition simply means that the codimension of $\pi(Z)$ is greater than the codimension of $Z$, thus extending the familiar notion for divisors. In general, this definition identifies the classes that are forced to be  ``rigid'' by the geometry of the morphism $\pi$.  A typical example is \emph{any} \emph{pseudo}-effective class in the kernel of $\pi_*$ for a birational map $\pi$.

\begin{thrm}[cf. \ref{exceptionalclassesthrm}] \label{firstexcthrm}
Let $\pi: X \to Y$ be a dominant morphism of projective varieties.  If $\alpha$ is a $\pi$-exceptional class, then:
\begin{enumerate}
\item $\alpha = 0 + N(\alpha)$ is the unique Zariski decomposition for $\alpha$.
\item $\alpha$ is the pushforward of a pseudo-effective class from a proper subscheme of $X$.
\end{enumerate}
\end{thrm}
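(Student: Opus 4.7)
My strategy is to exploit the Zariski decomposition $\alpha=P(\alpha)+N(\alpha)$ of \cite{fl15} to reduce (1) to a positivity lemma for movable classes, and to derive (2) from (1) by approximation. By further intersecting with $\pi^{*}H$ we may assume $r=k-e$. Intersecting the Zariski decomposition with $\pi^{*}H^{k-e}$ in $N_{e}(X)$ gives
\[
0 \;=\; P(\alpha)\cdot\pi^{*}H^{k-e}\;+\;N(\alpha)\cdot\pi^{*}H^{k-e}.
\]
Both summands lie in $\Eff_{e}(X)$: the movable term because movable classes intersect nef complete-intersection classes pseudo-effectively, a fact from \cite{fl14}; the other because pseudo-effective classes intersect nef classes pseudo-effectively. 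Since $\Eff_{e}(X)$ is salient, both vanish, so in particular $P(\alpha)\cdot\pi^{*}H^{k-e}=0$.

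The heart of (1) is the positivity lemma: \emph{a nonzero movable class $\mu\in\Mov_{k}(X)$ cannot satisfy $\mu\cdot\pi^{*}H^{k-e}=0$ in $N_{e}(X)$.} My plan is to use the strongly movable characterization---every $\mu\in\Mov_{k}(X)$ is a limit of classes $f_{*}A^{n-k}$ for $f\colon X'\to X$ birational and $A$ ample on $X'$---together with the projection formula
\[
f_{*}A^{n-k}\cdot \pi^{*}H^{k-e}\;=\;f_{*}\bigl((\pi f)^{*}H^{k-e}\cdot A^{n-k}\bigr).
\]
The inner class on $X'$ is a nonzero effective $e$-cycle: $(\pi f)^{*}\colon N^{k-e}(Y)\to N^{k-e}(X')$ is injective because $\pi f$ is dominant, and a general ample complete intersection meets any nonzero pullback class in a nonzero effective cycle; choosing representatives away from $\mathrm{Exc}(f)$ ensures the pushforward to $X$ is also nonzero. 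The delicate part is passing from strongly movable classes to arbitrary movable limits, for which one needs a uniform quantitative estimate preventing the $e$-classes $\mu_{n}\cdot\pi^{*}H^{k-e}$ from collapsing to zero as $\mu_{n}\to\mu\neq 0$---this is where the improved dual-positivity machinery extending \cite{fl13} enters. The lemma then gives $P(\alpha)=0$, and uniqueness of the Zariski decomposition in \cite{fl15} yields (1).

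Claim (2) follows from (1) by approximation. Write $\alpha=\lim_{n}\sum_{i}c_{n,i}[Z_{n,i}]$ with $Z_{n,i}$ irreducible, and decompose each component as either $\pi$-exceptional ($\dim\pi(Z_{n,i})<k-e$, hence supported on $\pi^{-1}(B_{n})$ with $\dim B_{n}<k-e$) or non-exceptional. The non-exceptional part $\beta_{n}\in\Eff_{k}(X)$ satisfies $\beta_{n}\cdot\pi^{*}H^{k-e}\to 0$ in $N_{e}(X)$. Any limit point $\beta$ of $\beta_{n}$ (extracted after bounding by $\alpha\cdot A^{k}$ for an ample $A$ on $X$) lies in $\Eff_{k}(X)$ and is $\pi$-exceptional, hence by (1) has zero positive part; the quantitative form of the positivity lemma forces $\beta=0$ by ruling out $\pi$-exceptional limits coming from non-exceptional cycles. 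Thus $\beta_{n}\to 0$, so $\alpha$ is approximated by cycles supported on $\pi^{-1}(B_{n})$ with $\dim B_{n}<k-e$, and a Chow-variety compactness argument on the bounded-degree subvarieties $B_{n}$ confines them to a fixed proper $B\subset Y$, realizing $\alpha$ as the pushforward of a pseudo-effective class from $\pi^{-1}(B)\subsetneq X$.

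The principal obstacle is the positivity lemma---specifically, the uniform control needed to transfer nonvanishing from strongly movable classes to arbitrary movable limits in $\Mov_{k}(X)$. Once this is available, both (1) and the boundedness feeding into the approximation argument for (2) fall out cleanly.
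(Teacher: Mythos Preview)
Your proposal has genuine gaps in both parts. For (1), you correctly reduce to the ``positivity lemma'' that a nonzero movable $k$-class cannot satisfy $\mu\cdot\pi^{*}H^{k-e}=0$, but your proof of it rests on the claim that every $\mu\in\Mov_{k}(X)$ is a limit of classes $f_{*}A^{n-k}$ with $f$ birational and $A$ ample. That is the BDPP description of the movable cone of \emph{curves}; for $k\geq 2$ it is neither the definition used here (strictly movable families) nor an established theorem, and you yourself flag the passage to limits as an unresolved obstacle. For (2), the approximation argument fails twice. First, knowing that a limit $\beta$ of the non-exceptional parts is $\pi$-exceptional with vanishing positive part does not give $\beta=0$: that is precisely what (2) is meant to address, not a consequence of (1). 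Second, even granting $\beta_{n}\to 0$, the image loci $B_{n}\subset Y$ have no reason to be bounded in degree or confined to a fixed proper subset; as $n$ varies they may well sweep out all of $Y$, so no Chow-variety compactness argument applies.

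The paper sidesteps both difficulties with a single geometric device: pass to a flattening $\pi':X'\to Y'$ with birational maps $f_{X},f_{Y}$, and set $E=\pi'^{*}F$ for $F$ effective and $f_{Y}$-antiample. The key lemma (Lemma~\ref{exceptionalintersectsnonpsef}) is that for any nonzero $\alpha'\in\Eff_{k}(X')$ with $f_{X*}\alpha'$ $\pi$-exceptional, the class $E\cdot\alpha'$ is \emph{not} pseudo-effective---otherwise $\alpha'$ would be $\pi'$-exceptional, which is impossible for an equidimensional map (Lemma~\ref{flatexceptional}). Part (1) follows immediately: a movable preimage of $P(\alpha)$ intersects every effective Cartier divisor pseudo-effectively, forcing it (and hence $P(\alpha)$) to vanish. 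Part (2) follows by lifting an extremal $\alpha$ to an extremal $\alpha'$ on $X'$; since $E\cdot\alpha'$ is not pseudo-effective, the elementary limiting argument of Proposition~\ref{prop:pushforwardfrome} shows $\alpha'$ is pushed from a component of $E$, and thus $\alpha$ is pushed from the \emph{fixed} proper subscheme $f_{X}(\lvert E\rvert)\subsetneq X$---no boundedness or compactness is needed.
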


\noindent Condition (2) implies that the Strong or Weak Conjecture for a $\pi$-exceptional class can be concluded from a statement in lower dimensions.  For example, since the Strong Conjecture is known for complex threefolds, we immediately obtain the Strong Conjecture for exceptional classes on fourfolds over $\mathbb C$.  This inductive relationship goes both ways:

\begin{prop}[cf. \ref{reddimbir} and \ref{prop:birationalequivalenttoSC}] \label{thrm:birationalnandn-1}
The Strong (resp.~Weak) Conjecture holds for birational maps $\pi: X \to Y$ of varieties of dimension $n$ if and only if the Strong (resp.~Weak) Conjecture holds in dimension $\leq n-1$.
\end{prop}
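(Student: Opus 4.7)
The strategy is to prove both directions of the biconditional. The forward direction (Strong Conjecture in dimension $\leq n-1$ implies Strong Conjecture for birational morphisms in dimension $n$) is essentially a corollary of Theorem \ref{firstexcthrm}, while the reverse direction requires realizing any lower-dimensional morphism as the restriction of a birational morphism in dimension $n$ to its exceptional locus.

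For the forward direction, let $\pi: X \to Y$ be birational in dimension $n$ and let $\alpha \in \Eff_k(X)$ satisfy $\pi_*\alpha=0$. Since $\pi$ is birational, $e=0$, and for any ample $H$ on $Y$ the projection formula gives $\pi_*(\alpha \cdot \pi^* H^k) = \pi_*\alpha \cdot H^k = 0$; as $\pi_*: N_0(X) \to N_0(Y)$ is the degree isomorphism, $\alpha \cdot \pi^* H^k = 0$, so $\alpha$ is $\pi$-exceptional. Theorem \ref{firstexcthrm} then writes $\alpha = i_*\beta$ for a closed embedding $i: V \hookrightarrow X$ of a proper subscheme with $\beta \in \Eff_k(V)$. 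Since $\dim V \leq n-1$ and $(\pi \circ i)_*\beta = \pi_*\alpha = 0$, the Strong Conjecture applied to $\pi \circ i: V \to Y$ writes $\beta$ as a limit of effective combinations of $k$-dimensional subvarieties of $V$ contracted by $\pi \circ i$. Pushing forward by $i$ yields the Strong Conjecture for $\alpha$. The Weak version is identical.

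For the reverse direction, assume the Strong Conjecture for birational morphisms in dimension $n$, and let $\pi': X' \to Y'$ be a morphism with $\dim X' \leq n-1$ and $\alpha' \in \Eff_k(X')$ with $\pi'_*\alpha' = 0$. I would construct a birational morphism $\tilde\pi: \tilde X \to \tilde Y$ of $n$-dimensional projective varieties together with closed embeddings $i: X' \hookrightarrow \tilde X$ and $j: Y' \hookrightarrow \tilde Y$ satisfying $\tilde\pi \circ i = j \circ \pi'$, arranged so that $i(X')$ contains every $\tilde\pi$-contracted $k$-dimensional subvariety of $\tilde X$. Given this, $\tilde\alpha := i_*\alpha'$ satisfies $\tilde\pi_*\tilde\alpha=0$, and the Strong Conjecture for $\tilde\pi$ approximates $\tilde\alpha$ by effective combinations of contracted subvarieties that by design lie in $i(X')$; descending via $i$ gives the Strong Conjecture for $\alpha'$.

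The main obstacle is the construction itself: a naive blow-up of a smooth projective $n$-fold $\tilde Y \supset Y'$ along $Y'$ has exceptional divisor a projective bundle over $Y'$ generally strictly larger than the image of $X'$, so one must produce $\tilde X$ whose exceptional locus is tailored to $\pi'$. I expect this can be accomplished either by an iterative argument, first handling the codimension-one case $\dim X'=n-1$ and then descending one codimension at a time, or by building $\tilde X$ directly from a projectivization associated to the graph of $\pi'$, for instance via a deformation-to-the-normal-cone type construction. Once that geometric step is in place, the equivalence follows.
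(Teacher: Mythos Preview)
Your forward direction is correct and matches the paper's Proposition \ref{reddimbir}: a $\pi$-contracted class for birational $\pi$ is $\pi$-exceptional, so Theorem \ref{firstexcthrm} pushes it from a proper subscheme, and the conjecture in lower dimension finishes. One small point: the subscheme produced by Theorem \ref{firstexcthrm} may be reducible, while the conjectures are stated for varieties; the paper handles this by first reducing to extremal $\alpha$, which then pushes from a single irreducible component.

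The reverse direction has a genuine gap. You correctly diagnose that the crux is a geometric construction embedding $\pi':X'\to Y'$ as the restriction to the exceptional locus of a birational morphism one dimension up, but you do not actually give one, and your speculations (deformation to the normal cone, iterated blow-ups) are in the right spirit but not the argument. The paper's construction is a \emph{relative cone} (Lemma \ref{relcone} and Proposition \ref{prop:birationalequivalenttoSC}): for a sufficiently $\pi'$-ample $H$ on $X'$, set $T=\Spec_{\mathcal{O}_{Y'}}\bigl(\mathcal{O}_{Y'}\oplus\bigoplus_{m\geq 1}\pi'_*\mathcal{O}_{X'}(mH)\bigr)$, with vertex section $Y'\hookrightarrow T$. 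Blowing up this section produces a line bundle over $X'$ whose zero section $E\cong X'$ is the exceptional divisor; compactifying to a $\mathbb{P}^1$-bundle $\eta:W\to X'$ and blowing $E$ back down gives a birational $\sigma:W\to S$ with $\dim W=\dim X'+1$ and $\sigma|_E\cong\pi'$. The identifications $N_k(X')=E\cdot\eta^*N_k(X')\subset N_k(W)$ and $N_k(\pi')=N_k(\sigma)$, compatible with pseudo-effective cones via the projective bundle structure, transfer the conjecture from $\sigma$ back to $\pi'$; iterating handles all $\dim X'\leq n-1$.

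Note also that your descent step assumes more than you state: even granting that all $\tilde\pi$-contracted subvarieties lie in $i(X')$, concluding $\alpha'\in\Eff_k(\pi')$ from $i_*\alpha'\in\Eff_k(\tilde\pi)$ requires that $i_*$ restricted to the relevant cones be injective, which is not automatic for an arbitrary embedding. In the paper's setup this comes for free from the $\mathbb{P}^1$-bundle description of $W$.
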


\begin{exmple}
To illustrate our techniques, in Example \ref{ccexample} we revisit the results of \cite{cc15} and \cite{luca15} which describe the geometry of higher codimension cycles on moduli spaces of pointed curves.  These papers identify classes that lie on extremal rays of the effective cone.  Using the results above, we show that their arguments actually establish extremality in the pseudo-effective cone.   (See also \cite[Remark 2.7]{cc15}.)
\end{exmple}

\subsection{Organization}

Section \ref{numeqsection} recalls the basic properties of numerical groups and positive cones.  We explain the basic features of the Strong and Weak Conjectures in Section \ref{sec:reductionsteps}.  In particular, applying ideas from \cite{fl13}, we recover many of the results of \cite{djv13}, extending some of them over an arbitrary algebraically closed field.  We also give many examples.  Rigid classes are analyzed in Section \ref{sec:excclasses} and movable classes are analyzed in Sections \ref{s:mc} and \ref{movclasssection}.  

\subsection{Acknowledgments}

Correspondence with Dawei Chen has inspired the statements in Section \ref{sec:contractibilityindex}. We also thank Zsolt Patakfalvi for helpful conversations.

\section{Background on numerical equivalence} \label{numeqsection}

\noindent By \textit{variety} we mean a reduced, irreducible, separated scheme of finite type over an algebraically closed field of arbitrary characteristic. Unless otherwise stated, $\pi:X\to Y$ is a morphism of projective varieties over the fixed ground field.

We use standard cycle constructions, but the reader should be cautioned that we work with arbitrary singularities and with numerical (and not homological or algebraic) equivalence.  Thus it is important to give the precise definitions we need.


For a projective variety $X$, we let $N_{k}(X)_{\mathbb{Z}}$ denote the quotient of the group of $\mathbb{Z}$-$k$-cycles by the relation of \emph{numerical equivalence} as in \cite[Chapter 19]{fulton84}.  $N_{k}(X)_{\mathbb{Z}}$ is a lattice inside the \emph{numerical space}
$$N_k(X):=N_k(X)_{\mathbb Z}\otimes_{\mathbb Z}\mathbb R.$$
If $Z$ is a $k$-cycle with $\mathbb{R}$-coefficients, its class in $N_k(X)$ is denoted $[Z]$.

The \emph{numerical dual group} is the vector space $N^{k}(X)$ dual to $N_{k}(X)$.  Any weighted degree $k$ homogeneous polynomial in Chern classes of vector bundles induces an element of $N^k(X)$ by intersecting against $k$-cycle Chow classes, and $N^{k}(X)$ is spanned by such elements.  The Chern class action on Chow groups descends to intersection maps $\cap: N^{r}(X) \times N_{k}(X) \to N_{k-r}(X)$.  We also use ``$\cdot$'' to denote these intersections.



\begin{conv} For the rest of the paper, the term cycle will always refer to a cycle with $\mathbb{Z}$-coefficients, and a numerical class will always refer to a class with $\mathbb{R}$-coefficients, unless otherwise qualified.
\end{conv}

\subsection{Families of cycles}

Due to subtleties of the Chow variety in positive characteristic, we will use a naive notion of a family of cycles.

\begin{defn}  \label{familydef}
Let $X$ be a projective variety.  A family of effective $k$-cycles (always with $\mathbb{Z}$-coefficients) on $X$ consists of a variety $W$, a reduced closed subscheme $U \subset W \times X$, and an integer $a_{i} \geq 0$ for each component $U_{i}$ of $U$ such that for each component $U_{i}$ of $U$ the first projection map $p: U_{i} \to W$ is flat (projective) equidimensional dominant of relative dimension $k$.  We will only consider families of cycles where each $a_{i} > 0$.
\end{defn}

The fiber over a closed point of $W$ defines a cycle $\sum_{i} a_{i} U_{i,w}$ on $X$.  As we vary $w \in W$, the resulting cycles are algebraically equivalent. We denote the corresponding numerical class by $[p]$. A family of cycles can always be extended to a projective base by using a flattening argument (see \cite[Remark 2.13]{fl14}).

\begin{constr}[Strict transform families] \label{stricttransformconstr}
Let $X$ be a projective variety and let $p: U \to W$ be a family of effective $k$-cycles on $X$.  Suppose that $\phi: X \dashrightarrow Y$ is a birational map.  We define the strict transform family of effective $k$-cycles on $Y$ as follows.

First, modify $U$ by removing all irreducible components whose image in $X$ is contained in the locus where $\phi$ is not an isomorphism.  Then define the reduced closed subset $U'$ of $W \times Y$ by taking the strict transform of the remaining components of $U$.  Over an open subset $W^{0} \subset W$, the projection map $p': (U')^{0} \to W^{0}$ is flat equidimensional on each component of $(U')^{0}$.  Each component of $U'$ is the transform of a unique component of $U$, and we assign it the same coefficient.
\end{constr}

\subsection{Positive cones}

\begin{defn}Let $X$ be a projective variety, and let $k\geq 0$. The \emph{pseudo-effective} cone
$\Eff_k(X)$ in $N_{k}(X)$ is the closure of the cone generated by classes of irreducible subvarieties of $X$.  A class is \emph{big} when it lies in the interior of $\Eff_{k}(X)$.  We use the notation $\alpha\preceq\beta$ if $\beta-\alpha\in\Eff_k(X)$.
\end{defn}

If $\pi: Y \to X$ is a surjective map, \cite[Corollary 3.22]{fl13} proves that the induced map $\pi_{*}: \Eff_{k}(Y) \to \Eff_{k}(X)$ is surjective.



It is also useful to identify the cones of ``moving cycles''.  These cones are well-studied for divisors and curves; the set-up for arbitrary cycles was considered in \cite{fl14}.

\begin{defn}Let $X$ be a projective variety.  A family of effective $k$-cycles $p: U \to W$ is \emph{strictly movable} if every component of $U$ dominates $X$.  The cycles defined by this family are called movable cycles.  When $U$ is an irreducible variety we say that $p$ is \emph{strongly movable}.  

The closure of the cone generated by classes of cycle theoretic general fibers of strongly movable families is the \emph{movable} cone $\Mov_k(X)$. Its elements are called \emph{movable} classes.  
\end{defn}

Again, if $\pi: Y \to X$ is a surjective map, \cite[Corollary 3.12]{fl14} proves that the induced map $\pi_{*}: \Mov_{k}(Y) \to Mov_{k}(X)$ is surjective.  We will also need the following key property of movable classes from \cite{fl14}:

\begin{thrm} \label{nullmovpush}
Suppose that $\pi: X \to Y$ is a generically finite map of projective varieties.  If $\alpha \in \Mov_{k}(X)$ and and $\pi_{*}\alpha = 0$, then $\alpha = 0$.
\end{thrm}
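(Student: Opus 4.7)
The plan is to construct a partial pullback $\pi^{\#}\colon \Mov_k(Y) \to \Mov_k(X)$ via fiber products of families and to prove an inequality $\pi^{\#}\pi_*\alpha \succeq \alpha$ on $\Mov_k(X)$; combined with salientness of $\Eff_k(X)$, this will force $\alpha = 0$ from $\pi_*\alpha = 0$.

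To construct $\pi^{\#}$, take a strongly movable class $[q] \in \Mov_k(Y)$ from a family $q\colon V \to S$ with $V$ irreducible dominating $Y$, and form the fiber product $V \times_Y X$. Since $\pi$ is generically finite and surjective, the (reduced) union of dominant components of $V \times_Y X \to X$ projects to $S$ equidimensionally of relative dimension $k$, giving a family of $k$-cycles on $X$ whose total space dominates $X$, and hence a strongly movable class $\pi^{\#}[q] \in \Mov_k(X)$. Extend by $\mathbb{R}_{\geq 0}$-linearity and closure. Now let $\alpha = [p] \in \Mov_k(X)$ be strongly movable from a family with general fiber $F_w$ covering $X$. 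The generic finiteness of $\pi$ forces $\pi(F_w)$ to be $k$-dimensional for general $w$, so $\pi_*[F_w] = \deg(\pi|_{F_w})\cdot[\pi(F_w)]$. The original total space $U$ then embeds into the fiber product as one of the dominant components of $V \times_Y X$ (with $V$ the family of images $\pi(F_w)$), so the irreducible subvariety $F_w$ appears as one of the dominant components of $\pi^{-1}(\pi(F_w))$ for general $w$. This yields
\[
\pi^{\#}\pi_*[p] \;=\; \deg(\pi|_{F_w})\cdot \pi^{\#}[\pi(F_w)] \;\succeq\; \deg(\pi|_{F_w})\cdot [p] \;\succeq\; [p],
\]
the first inequality from discarding the other dominant components (which contribute effective classes), and the second since $\deg(\pi|_{F_w}) \geq 1$. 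Extending by linearity and closure, $\pi^{\#}\pi_*\alpha \succeq \alpha$ for all $\alpha \in \Mov_k(X)$.

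The conclusion now follows: from $\pi_*\alpha = 0$ we get $\pi^{\#}\pi_*\alpha = 0 \succeq \alpha$, i.e., $-\alpha \in \Eff_k(X)$; combined with $\alpha \in \Mov_k(X) \subseteq \Eff_k(X)$ and the salientness of $\Eff_k(X)$ (a standard consequence of strict positivity of the intersection functional $\gamma \mapsto \gamma \cdot A^k$ on $\Eff_k(X)\setminus\{0\}$ for $A$ ample), this forces $\alpha = 0$. The main technical obstacle is the rigorous construction of $\pi^{\#}$ and its good behavior in the presence of singularities: one must verify linearity, continuity, and the correct identification of dominant components in the fiber product, so that the pseudo-effective inequality propagates from strongly movable generators through limits. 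This is likely handled by passing to a resolution of $Y$ and invoking the already-established surjectivity of $\pi_*$ on movable cones to transfer data back to the original setup.
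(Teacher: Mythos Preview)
The paper does not prove this theorem; it is quoted from \cite{fl14}. So there is no ``paper's own proof'' to compare against here, and I will evaluate your argument on its own terms.

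Your strategy has a genuine gap, and it is precisely the one you flag at the end: the operation $\pi^{\#}$ is defined on \emph{families}, not on numerical classes, so ``extend by $\mathbb{R}_{\geq 0}$-linearity and closure'' has no meaning. Two strongly movable families with the same class $[q]$ can yield different $\pi^{\#}[q]$, so linearity fails; and even granting the pointwise inequality $\pi^{\#}\pi_*[p_i]\succeq [p_i]$ for a sequence with $[p_i]\to\alpha$, you cannot pass to the limit without knowing that the left-hand side stays bounded. A pseudo-effective sequence whose pushforward tends to $0$ need not be bounded (think of multiples of an exceptional divisor on a blow-up), so $\pi_*(\pi^{\#}\pi_*[p_i])\to 0$ gives you nothing. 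Your proposed fix---resolving $Y$ and invoking surjectivity of $\pi_*$ on movable cones---does not touch this boundedness problem at all.

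What actually closes the argument (and what the proof in \cite{fl14} uses) is a different, more direct mechanism: for any movable $\alpha$ and any effective Cartier divisor $E$, the class $\alpha\cdot E$ is pseudo-effective, because a general member of a strictly movable family meets $E$ properly. Since $\pi$ is generically finite, $\pi^*A$ is big and nef for $A$ ample on $Y$; write $\pi^*A = H + E$ with $H$ ample and $E$ effective (after scaling). Then, exactly as in the paper's Lemma~\ref{dercycle}, the psefness of $\alpha\cdot E$ gives $\alpha\cdot(\pi^*A)^k \geq \alpha\cdot H^k$. The left side equals $\pi_*\alpha\cdot A^k = 0$, so $\alpha\cdot H^k\leq 0$; since $H$ is ample and $\alpha$ is pseudo-effective, $\alpha = 0$. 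Ironically, this same lemma (movable $\cdot$ effective is psef) is exactly what you would need to rescue your boundedness step---but once you have it, the detour through $\pi^{\#}$ is unnecessary.
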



\subsection{Dual positive cones}

\begin{defn}
The \emph{nef} cone $\Nef^{k}(X)$ is the dual cone in $N^{k}(X)$ of $\Eff_{k}(X) \subset N_{k}(X)$.
\end{defn}

While nef divisors satisfy many desirable geometric properties, nef classes of higher codimension may fail to behave as well.  The basepoint free cone is introduced in \cite{fl13} as a better analogue of the nef cone of divisors.

\begin{defn}
A basepoint free family of effective $k$-cycles on a projective variety $X$ consists of
\begin{itemize}
\item an equidimensional quasi-projective scheme $U$,
\item a (necessarily equidimensional) flat morphism $s: U \to X$,
\item and a proper morphism $p:U \to W$ of relative dimension $k$ to a quasi-projective variety $W$ such that each component of $U$ surjects onto $W$.
\end{itemize}
Note that the term ``family'' here differs from that in Definition  \ref{familydef}, since $U$ is not necessarily a subset of $W\times X$, so that the fibers $U_w$ are not necessarily cycles on $X$.

The basepoint free cone $\bpf_{k}(X) \subset N_{k}(X)$ is the closure of the cone generated by the classes $F_p:=(s|_{U_w})_*[U_w]$, where $U_w$ is the fiber of a basepoint free family $p$ as above over a general $w\in W$ .
If $X$ is smooth, we define the basepoint free cone $\bpf^{k}(X) \subset N^{k}(X)$ using the isomorphism $\cap[X]$.
\end{defn}

As demonstrated by \cite{fl13}, basepoint free classes satisfy many of the desirable properties of nef divisors, improving upon the notion of nefness in higher codimension.  

We will often use a special feature of basepoint freeness, which we outline carefully here.  Suppose $\pi: Y \to X$ is birational and $p: U \to W$ is a family of $k$-cycles admitting a flat map to $X$.  In this case we can consider $p$ both as a family of cycles and  as a basepoint free family.  In this situation, the base change family on $Y$ coincides with the strict transform family as defined earlier.  (Since every component of $U$ maps dominantly onto $X$, so also every component of $U \times_{X} Y$ dominates $Y$.  Thus both families are defined via base change.) In particular, for a general member of $p$ the numerical class of the strict transform cycle is the pullback of the class of the cycle.


\section{Preliminaries on the Strong and Weak Conjectures} \label{sec:reductionsteps}

We next study the basic features of the Strong and Weak Conjectures in the numerical setting.

\begin{defn}Let $\pi:X\to Y$ be a morphism of projective varieties. Let
\begin{itemize}
\item $\Eff_k(\pi)$ denote the closed convex cone in $N_k(X)$ generated by effective $k$-classes of $X$ contracted by $\pi$,
\item $N_k(\pi)$ denote the subspace of $N_k(X)$ generated by effective $k$-classes of $X$ contracted by $\pi$,
\end{itemize}
\end{defn}

\begin{rmk}We can rephrase our conjectures and properties of interest as follows:
\begin{enumerate}[i)]
\item Weak Conjecture: $\ker\pi_*\cap \Eff_k(X)\subseteq N_k(\pi)$.
\item Strong Conjecture: $\ker\pi_*\cap \Eff_k(X)=\Eff_k(\pi)$.
\end{enumerate}
\end{rmk}

\noindent Collectively we call the Strong and Weak Conjectures the \emph{Pushforward Conjectures} and denote them by PC.  (More precisely, to say a property holds for the PC means that it holds for both the Strong Conjecture and the Weak Conjecture.)

\begin{rmk}\label{mindim}If $\dim Y<k$, then $N_k(\pi)=N_k(X)$ and PC holds trivially.\end{rmk}

As we remarked in the introduction, the PC for curves and divisors were proved in the homological setting by \cite{djv13}.  The proof for curves works well in arbitrary characteristic and in the numerical setting.  For varieties over $\mathbb{C}$, we can also deduce the PC conjectures for divisors in the numerical setting.

There are several situations where the PC conjectures are known.  They are trivially true for varieties where the pseudo-effective and effective cones coincide, such as a toric or spherical variety.  The Weak Conjecture can easily be proved for projective bundles; more generally, we have:

\begin{thrm}[\cite{fl15}] \label{firstgkthrm}
Let $\pi: X \to Y$ be a dominant morphism of projective varieties over an uncountable algebraically closed field, with $Y$ smooth.  Suppose that every fiber $F$ (over a closed point) of $\pi$ satisfies $\dim_{\mathbb Q}(CH_{0}(F)_{\mathbb Q}) =1$.  Then the Weak Conjecture holds for $\pi$.
\end{thrm}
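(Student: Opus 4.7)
The plan is to produce, using a single auxiliary multisection of $\pi$ and the $CH_0$-triviality hypothesis, a decomposition of each effective approximating class as a ``pullback-like'' class from $Y$ plus a class in $N_k(\pi)$, and then pass to the limit.

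Fix a subvariety $T\subset X$ of dimension $\dim Y$ for which $\pi|_T\colon T\to Y$ is surjective and generically finite of some degree $d>0$; a general complete intersection of very ample divisors on $X$ works. Since $Y$ is smooth, the pullback $(\pi|_T)^{*}\colon N_k(Y)\to N_k(T)$ is defined for $k\leq\dim Y$, and composing with pushforward along $\iota\colon T\hookrightarrow X$ yields a linear map
\begin{equation*}
\phi\colon N_k(Y)\to N_k(X),\qquad \phi(\beta)=\iota_{*}(\pi|_T)^{*}\beta,
\end{equation*}
satisfying $\pi_{*}\circ\phi=d\cdot\mathrm{id}$ by the projection formula. (When $k>\dim Y$ the Weak Conjecture is automatic by Remark \ref{mindim}, so we may assume $k\leq\dim Y$.) The key technical step is the following relative spreading statement: for every irreducible $k$-subvariety $V\subset X$ with $W:=\pi(V)$ of dimension $k$ and $d_V:=\deg(V/W)$,
\begin{equation*}
[V]=\tfrac{1}{d}\phi(\pi_{*}[V])+[\gamma_V]\quad\text{in }N_k(X),
\end{equation*}
where $\gamma_V$ is a $k$-cycle supported on $\pi^{-1}(D_V)$ for some proper closed $D_V\subsetneq W$; in particular $[\gamma_V]\in N_k(\pi)$. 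To see this, note that $dV$ and $d_V\cdot(T\cap\pi^{-1}(W))$ are relative $0$-cycles of the same degree $dd_V$ over $W$, and on each closed-point fiber $X_w$ their difference has degree $0$, hence vanishes in $CH_0(X_w)_{\mathbb{Q}}$ by the fiberwise hypothesis. Over an uncountable algebraically closed base field, this fiberwise triviality propagates to the generic fiber of $\pi^{-1}(W)\to W$ by the standard Bloch--Srinivas spreading argument, and the localization sequence in $CH_{*}(\pi^{-1}(W))_{\mathbb{Q}}$ then yields the claimed support. Rational equivalence refines numerical equivalence, so the identity holds in $N_k(X)$.

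Granted the decomposition, suppose $\alpha\in\Eff_k(X)$ with $\pi_{*}\alpha=0$, and approximate $\alpha=\lim_n\alpha_n$ by effective combinations of prime classes. Split $\alpha_n=\alpha_n^{\mathrm{c}}+\alpha_n^{\mathrm{nc}}$ into contracted and non-contracted parts, so $\alpha_n^{\mathrm{c}}\in N_k(\pi)$. Applying the decomposition termwise to $\alpha_n^{\mathrm{nc}}$ and using $\pi_{*}\alpha_n^{\mathrm{c}}=0$ gives
\begin{equation*}
\alpha_n=\tfrac{1}{d}\phi(\pi_{*}\alpha_n)+\eta_n,\qquad \eta_n\in N_k(\pi).
\end{equation*}
Since $\pi_{*}\alpha_n\to 0$ and $\phi$ is continuous between finite-dimensional spaces, the first term tends to $0$; hence $\eta_n\to\alpha$. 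Because $N_k(\pi)$ is a finite-dimensional, therefore closed, subspace of $N_k(X)$, it follows that $\alpha\in N_k(\pi)$, which is the Weak Conjecture.

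The main obstacle is the relative spreading decomposition: propagating the $CH_0$-triviality of closed-point fibers to the generic fiber of $\pi^{-1}(W)\to W$ over an uncountable algebraically closed field in arbitrary characteristic (where uncountability is essential), and running the localization argument carefully without assuming smoothness of $X$ (only $Y$ is smooth). Once this is in place, the remaining continuity and linear-algebra steps are formal.
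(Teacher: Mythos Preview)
The paper does not prove this theorem here; it is quoted from \cite{fl15} without argument, so there is no in-paper proof to compare against. That said, your approach---fixing a multisection $T$, using smoothness of $Y$ to build a linear splitting $\phi=\iota_{*}(\pi|_T)^{*}$ of $\pi_{*}$, and then applying a Bloch--Srinivas spreading argument to show that every non-contracted prime $k$-cycle differs from $\tfrac{1}{d}\phi(\pi_{*}[V])$ by a class supported over a proper closed subset of $\pi(V)$---is the standard strategy for results of this type and is essentially correct as a sketch. The final limit argument, using that $N_k(\pi)$ is finite-dimensional hence closed, is clean.

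Two points deserve more care. First, ``$T\cap\pi^{-1}(W)$'' must be read as the refined Gysin class $(\pi|_T)^{!}[W]\in CH_k\bigl((\pi|_T)^{-1}(W)\bigr)$ coming from the regular embedding $W\hookrightarrow Y$, not a set-theoretic intersection: $T$ need not meet $\pi^{-1}(W)$ in the expected dimension, and only the refined class is guaranteed to push forward to $d[W]$ on $W$ (hence to have degree $d$ over the generic point). Second, in the spreading step you should only claim triviality of the specialization at \emph{very general} closed points $w\in W$, not all of them; this is precisely what the countability argument over an uncountable base delivers, and it suffices to force rational triviality over the generic point of $W$ and then invoke localization. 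With these clarifications the argument goes through, and singularities of $X$ cause no trouble since neither localization nor refined Gysin from a smooth target requires smoothness of the source.
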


The Strong Conjecture is much harder to prove, even for maps with an easy geometric structure.  (For example, we do not know how to prove the PC for projective bundles of relative dimension $\geq 2$.)  The difficulties are illustrated by the following example.

\begin{exmple} \label{surfaceproductexample2}
Let $S$ be a smooth surface such that $A_{0}(S) = \mathbb{Z}$.  By the work of \cite{mumford68} and \cite{roitman72}, this implies that $p_{g} = 0$ and $\mathrm{Alb}(S)$ is trivial.  Examples include any rational surface $S$ and conjecturally any surface with $q=p_{g}=0$.

Suppose that $Y$ is another smooth surface.  There is an isomorphism
\begin{equation*}
N_{2}(S \times Y) \cong \mathbb{R} \oplus (N_{1}(S) \otimes N_{1}(Y)) \oplus \mathbb{R}.
\end{equation*}
For surfaces over $\mathbb{C}$, this follows easily from Hodge theory and the K\"unneth formula.   (In fact, this argument also works for any surface $S$ satisfying $q = p_{g} = 0$.)  Over an arbitrary algebraically closed field, this follows from \cite[Theorem 1.3]{fl15}.  Note that the Weak Conjecture holds for the second projection map and for the first projection map as well.  

We now discuss the Strong Conjecture for the map $\pi=\pi_2:S\times Y\to Y$.  We first need to understand the geometry of $\pi$-vertical surfaces.  Suppose that $Z$ is an irreducible $\pi$-vertical surface so $\pi(Z)$ is a curve $C$ on $Y$.  Let $C'$ be a normalization of $C$ and $Z'$ denote the strict transform of $Z$ on $S\times C'$. If $Z'$ does not dominate $S$, then it is the pullback of a divisor on $S$.  If it does dominate $S$, then it induces a morphism $S \to \mathrm{Jac}(C')$.  But by assumption on the Albanese map this morphism is trivial.  So after twisting by the pullback of a line bundle from $S$, the divisor $Z'$ is the pullback of a divisor on $C'$.

To prove the Strong Conjecture, it suffices to consider the case when $\alpha\in\Eff_2(X)\cap\ker\pi_*$ is extremal.

\begin{claim} The Strong Conjecture holds for an extremal class $\alpha$ if and only if the projection $\alpha^{(1)}$ of $\alpha$ onto the $N_1(S)\otimes N_1(Y)$ component of $N_2(S\times Y)$ has shape $a\otimes b$ with $a\in N_1(S)$ and $b\in N_1(Y)$. (If the SC is true, then by Lemma \ref{irraprox} we can write $\alpha$ as a limit of cycles $Z_i$, each with irreducible support that does not dominate $Y$. The above argument shows that $Z_i=a_i\otimes b_i$ or $Z_i\in\mathbb R_+F_2$, and by passing to limits $\alpha=\alpha^{(1)}=a\otimes b$, or $\alpha$ is a multiple of $F_2$ and $\alpha^{(1)}=0$. Conversely, if $\alpha=a\otimes b+cF_2$, then $c\geq 0$ because it identifies with $\pi_*\alpha$. Let $\eta$ be an arbitrary nef class in $N_{1}(S)$. Then $(a\cdot \eta)b=\pi_{2*}(\alpha\cdot\pi_1^*\eta)\in\Eff_1(Y)$ and up to signs we can assume that $a$ and $b$ are both psef. Consequently $\alpha=\alpha^{(1)}+cF_2$ is a sum of psef cycles, both contracted by $\pi$ and SC is straightforward.)
\end{claim}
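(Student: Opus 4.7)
The plan is to prove both implications using the direct sum decomposition
\begin{equation*}
N_2(S\times Y)\cong \mathbb{R}[F_1]\oplus(N_1(S)\otimes N_1(Y))\oplus \mathbb{R}[F_2]
\end{equation*}
together with the geometric description of irreducible $\pi_2$-vertical surfaces given in the preceding paragraph. As a preliminary observation, $\ker\pi_{2*}\subset N_2(S\times Y)$ equals $(N_1(S)\otimes N_1(Y))\oplus \mathbb{R}[F_2]$: for a pure tensor $a\otimes b=\pi_1^*a\cdot\pi_2^*b$, the class $\pi_{2*}(a\otimes b)$ lies in the (zero) group of $3$-cycles on the surface $Y$, while $\pi_{2*}[F_1]=[Y]\neq 0$. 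Thus every $\alpha\in\ker\pi_{2*}$ admits a unique decomposition $\alpha=\alpha^{(1)}+c[F_2]$ with $\alpha^{(1)}\in N_1(S)\otimes N_1(Y)$ and $c\in\mathbb R$.

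For the forward direction, suppose the Strong Conjecture holds for the extremal class $\alpha$. I would invoke Lemma \ref{irraprox} to obtain an approximation $\alpha=\lim c_i[Z_i]$ with the $Z_i$ irreducible and $\pi_2$-contracted. Applying the geometric analysis of the preceding paragraph to each $Z_i$ — normalize the image curve $\pi_2(Z_i)$ to $C_i'$, apply K\"unneth on the threefold $S\times C_i'$, and use the vanishing of $\mathrm{Alb}(S)$ to trivialize the induced morphism to $\mathrm{Jac}(C_i')$ — yields $[Z_i]^{(1)}=a_i\otimes b_i$ for some pure tensor. Projecting onto the middle summand, $\alpha^{(1)}=\lim c_i(a_i\otimes b_i)$ is a limit of pure tensors. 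Since the locus of pure tensors inside $N_1(S)\otimes N_1(Y)$ is closed (it is the affine cone over the Segre variety in the finite-dimensional tensor product), $\alpha^{(1)}$ is itself a pure tensor, possibly zero.

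For the reverse direction, write $\alpha=a\otimes b+c[F_2]$ and verify the positivity needed to realize $\alpha$ as a limit of contracted effective cycles. The scalar $c$ agrees with $\pi_{1*}\alpha$ measured against the fundamental class $[S]$ (since $\pi_{1*}(a\otimes b)=0$ for dimension reasons and $\pi_{1*}[F_2]=[S]$), and pseudo-effectivity of $\alpha$ forces $c\geq 0$. For a nef class $\eta\in N^1(S)$ the projection formula gives
\begin{equation*}
\pi_{2*}(\alpha\cdot\pi_1^*\eta)=(a\cdot\eta)\,b,
\end{equation*}
and since $\pi_1^*\eta$ is basepoint free it preserves pseudo-effectivity under intersection and pushforward, so $(a\cdot\eta)b\in\Eff_1(Y)$. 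Symmetrically $(b\cdot\xi)a\in\Eff_1(S)$ for $\xi$ nef on $Y$. Taking $\eta$ and $\xi$ ample, and exploiting the symmetry $a\otimes b=(-a)\otimes(-b)$, one concludes that $a\in\Eff_1(S)$ and $b\in\Eff_1(Y)$ after a common sign change. Approximating $a=\lim[A_j]$ and $b=\lim[B_k]$ by effective divisor classes gives $a\otimes b=\lim[A_j\times B_k]$, and each $A_j\times B_k$ is a $\pi_2$-vertical surface. Adding the fiber contribution $c[F_2]$ presents $\alpha$ as a limit of $\pi_2$-contracted effective cycles, establishing the Strong Conjecture.

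The main obstacle is the forward direction, specifically the convergence step: the individual sequences $a_i$ and $b_i$ need not converge, so one cannot naively extract a limit pure tensor $a\otimes b$. This is resolved by the closedness of the decomposable locus in a finite-dimensional tensor product, which bypasses any need for separate control over the two factors and any ad hoc use of extremality beyond what is already encoded in Lemma \ref{irraprox}. The reverse direction is comparatively routine once the two pseudo-effectivity statements for $a$ and $b$ have been arranged via the bilinear pushforward identities above.
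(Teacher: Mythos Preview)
Your argument is correct and follows the paper's approach; your explicit appeal to the closedness of the cone over the Segre variety makes rigorous the limit step the paper compresses into ``by passing to limits,'' and it also cleanly absorbs the case where an irreducible vertical $Z_i$ has class $a_i\otimes b_i + d_i[F_2]$ rather than a pure tensor. One minor wording point: for $\eta$ merely nef the class $\pi_1^*\eta$ need not be basepoint free, but it is a limit of ample pullbacks, and that suffices to give $\alpha\cdot\pi_1^*\eta\in\Eff_1(S\times Y)$.
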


By the claim, the SC holds if and only if $\Eff_2(S\times Y)\cap (N_1(S)\otimes N_1(Y)\oplus\mathbb R[F_2])$ is generated by $F_2$ and by classes $a\otimes b$, where $a\in\Eff_1(S)$ and $b\in\Eff_1(Y)$. This holds for example when either $\Eff_1(S)$ or $\Eff_1(Y)$ is simplicial, e.g.~when the Picard number of $S$ or $Y$ is at most two. (Say $\Eff_1(S)$ is simplicial, generated by a basis $a_1,\ldots,a_{\rho}$ of $N^1(S)$. Then the dual basis $a_i^*$ generates $\Nef^1(S)$. Writing $\alpha$ in the unique way as $\alpha=\sum_ia_i\otimes b_i+cF_2\in\Eff_2(S\times Y)$, we see $c\geq 0$ and $\pi_{2*}(\alpha\cdot\pi_1^*(a_i^*))=b_i\in\Eff_1(Y)$.)\qed 
\end{exmple}

\subsection{Reduction steps}

\cite{djv13} shows that the (homological) SC/WC follow from certain special cases.  We note that all the reduction steps of \cite{djv13} hold in the setting of numerical equivalence using essentially the same proofs.  The following proposition is the basic tool for making this comparison.

\begin{prop}[\cite{fl13} Corollary 3.15]
\label{numchar}Let $\pi: X \to Y$ be a morphism of projective varieties.  If $\alpha\in\Eff_k(X)$ and $h$ is an ample class on $Y$, then $\pi_*\alpha=0$ if and only if $\alpha{\cdotp} \pi^*h^k=0$.
\end{prop}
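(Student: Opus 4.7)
The plan is to prove the equivalence by reducing both directions to the projection formula, and then identifying the converse with a positivity statement on $Y$.

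For the ``only if'' direction, the computation is one line. The intersection $\alpha \cdot \pi^*h^k$ is the degree of the $0$-cycle class obtained by capping $\pi^*h^k$ with $\alpha$, and the projection formula gives
\[
\alpha \cdot \pi^*h^k \;=\; \pi_{*}\bigl(\alpha \cap \pi^*h^k\bigr) \cdot 1 \;=\; \pi_{*}\alpha \cdot h^k.
\]
Hence $\pi_{*}\alpha=0$ implies $\alpha\cdot \pi^*h^k=0$.

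For the converse, I would use the same identity in reverse. Granting the hypothesis $\alpha\cdot\pi^*h^k=0$, it yields $\pi_{*}\alpha\cdot h^k=0$. Now $\pi_{*}\alpha \in \Eff_{k}(Y)$: effective cycles push forward to effective cycles (with the convention that the pushforward is $0$ when the image has smaller dimension), and pseudo-effectivity is preserved under limits, so $\pi_{*}\alpha$ lies in $\Eff_{k}(Y)$. The problem therefore reduces to the following positivity claim, which is the real content:

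\smallskip
\noindent\textbf{Claim.} If $\beta\in\Eff_{k}(Y)$ satisfies $\beta\cdot h^k=0$ for some ample class $h$, then $\beta=0$.
\smallskip

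To prove the claim, the strategy is to show that $h^k$ lies in the \emph{interior} of the dual cone $\Nef^{k}(Y)=\Eff_{k}(Y)^{\vee}$. Granting this, the standard duality for pointed full-dimensional closed convex cones identifies interior elements of $\Eff_{k}(Y)^{\vee}$ with functionals that are strictly positive on $\Eff_{k}(Y)\setminus\{0\}$, which is precisely the claim. To exhibit $h^k$ in the interior, one uses that the ample cone is open in $N^{1}(Y)$: for any divisor class $D$ and sufficiently small $\varepsilon_{i}>0$, the class $h+\varepsilon_{i}D$ remains ample, so the $k$-fold products $\prod_{i}(h+\varepsilon_{i}D_{i})$ lie in $\bpf^{k}(Y)\subseteq\Nef^{k}(Y)$; expanding these products and varying the $D_{i}$ produces a full open neighborhood of $h^k$ in $\Nef^{k}(Y)$. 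The main obstacle is precisely this last ``spanning'' step: verifying that such perturbations cover an open neighborhood of $h^k$ in $N^{k}(Y)$ is routine for smooth $Y$ where products of divisors generate $N^{k}$, but for a general projective $Y$ it needs the more careful argument of \cite[Corollary 3.15]{fl13}, combined with the pointedness of $\Eff_{k}(Y)$ established in \cite[Corollary 3.22]{fl13}.
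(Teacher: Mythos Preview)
The paper does not supply a proof of this proposition; it is quoted from \cite[Corollary 3.15]{fl13} without argument. So there is no paper proof to compare against, and your proposal has to be judged on its own.

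Your reduction via the projection formula is correct and standard: both directions come down to the identity $\alpha\cdot\pi^*h^k = (\pi_*\alpha)\cdot h^k$, and the real content is your Claim that a nonzero $\beta\in\Eff_k(Y)$ satisfies $\beta\cdot h^k>0$.

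The gap is in your argument for the Claim. The perturbation $h\mapsto h+\varepsilon_i D_i$ only shows that $h^k$ is interior to $\Nef^k(Y)$ \emph{relative to the subspace of $N^k(Y)$ spanned by products of Cartier divisor classes}. On a singular $Y$ this subspace can be proper, so you have not placed $h^k$ in the interior of $\Nef^k(Y)\subset N^k(Y)$. You recognize this and then defer to ``the more careful argument of \cite[Corollary 3.15]{fl13}'' --- but that is precisely the statement under discussion, so the proposal becomes circular at the decisive step. (Separately, \cite[Corollary 3.22]{fl13} is invoked in this paper for surjectivity of $\pi_*$ on pseudo-effective cones, not for pointedness.) A self-contained route is to prove directly that the slice $\{\beta\in\Eff_k(Y): h^k\cdot\beta\le 1\}$ is compact, e.g.\ via a Chow-variety boundedness argument after embedding $Y$ in projective space; the Claim follows at once from that.
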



A key consequence is:

\begin{cor}[\cite{djv13} Proposition 2.1]\label{finitecase} Let $\pi:X\to Y$ be a finite morphism of projective varieties. Let $\alpha\in\Eff_k(X)$. Then $\pi_*\alpha=0$ if and only if $\alpha=0$.\end{cor}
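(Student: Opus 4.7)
The plan is to reduce the corollary, via Proposition \ref{numchar}, to a higher-codimension Kleiman-type positivity statement. The implication $\alpha = 0 \Rightarrow \pi_*\alpha = 0$ is immediate.

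For the converse, fix an ample class $h$ on $Y$. By Proposition \ref{numchar}, $\pi_*\alpha = 0$ is equivalent to $\alpha \cdot \pi^*h^k = 0$. Because $\pi$ is finite, $A := \pi^*h$ is ample on $X$ (pullback of an ample class by a finite morphism is ample, as some multiple is very ample and pulls back via a finite morphism to $\mathbb{P}^N$ to an ample class). Hence the corollary reduces to the following claim: for an ample divisor class $A$ on $X$ and any $\alpha \in \Eff_k(X)$, the vanishing $\alpha \cdot A^k = 0$ forces $\alpha = 0$.

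For this claim, consider the continuous linear functional $L(\beta) := \beta \cdot A^k$ on $N_k(X)$; it is non-negative on $\Eff_k(X)$, and for every integral $k$-dimensional subvariety $Z \subset X$ we have $L([Z]) = (A|_Z)^k > 0$ by Nakai's criterion applied to the ample class $A|_Z$ on $Z$. Thus $L$ is strictly positive on every nonzero effective class. The main obstacle is promoting this strict positivity to the closure $\Eff_k(X)$, equivalently showing $A^k$ lies in the interior of the dual cone $\Nef^k(X)$. I would handle this by writing any approximation $\alpha = \lim \alpha_n$ with $\alpha_n = \sum_i a_{n,i}[Z_{n,i}]$ effective, and splitting each $\alpha_n$ into a bounded-$A$-degree part (a finite collection of numerical classes, since integral subvarieties of bounded degree lie in finitely many components of the Chow variety) and a high-$A$-degree part; the vanishing $L(\alpha_n) \to 0$ together with the positive integer lower bound $A^k \cdot Z \geq 1$ for integral $Z$ then forces both parts to vanish in the limit, contradicting $\alpha \neq 0$. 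This higher-codimension extension of Kleiman's criterion is the only nontrivial step; the rest of the argument is formal.
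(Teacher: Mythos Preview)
Your reduction via Proposition~\ref{numchar} is exactly the paper's intended argument: since $\pi$ is finite, $A=\pi^*h$ is ample, and the corollary follows once one knows that $A^k\cdot\alpha=0$ forces $\alpha=0$ for $\alpha\in\Eff_k(X)$. The paper does not reprove this last fact; it is the content of \cite[Corollary~3.15]{fl13} (equivalently, $A^k$ lies in the interior of $\Nef^k(X)$), which the paper uses freely elsewhere (e.g.\ in the proof of Proposition~\ref{prop:pushforwardfrome}).

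Your attempt to supply a direct proof of this interior statement, however, has a genuine gap in the treatment of the ``high-degree part.'' From $L(\alpha_n)\to 0$ and $A^k\cdot Z\ge 1$ you correctly deduce that the \emph{sum of coefficients} $\sum_i a_{n,i}$ tends to $0$. But to conclude that the high-degree part tends to $0$ \emph{in $N_k(X)$} you would need a bound of the form $\|[Z]\|\le C\cdot\deg_A(Z)$ for some norm $\|\cdot\|$ on $N_k(X)$---and that bound is precisely equivalent to the statement that $A^k$ is interior to $\Nef^k(X)$, which is what you are trying to prove. The Chow-variety finiteness handles the bounded-degree part, but buys you nothing for the unbounded part.

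A non-circular route: choose finitely many products $Q=H_1\cdots H_k$ of ample divisors spanning $N^k(X)$. For each such $Q$ pick $C$ with $CA-H_i$ nef for all $i$; then for every subvariety $Z$ one has $Q\cdot[Z]\le C^k\,A^k\cdot[Z]$ (telescoping $C^kA^k-Q$ as a sum of terms of the form $(\text{nef divisor})\cdot(\text{product of nef divisors})$, each nonnegative on effective classes). Hence $A^k\pm\epsilon Q\in\Nef^k(X)$ for small $\epsilon$, and since the $Q$ span, $A^k$ is interior. This replaces your Chow-variety sketch and completes the argument.
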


\begin{cor}The homological PC implies the numerical PC.
\end{cor}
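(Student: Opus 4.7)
The strategy is to reduce the numerical PC to the homological PC via a lifting argument. Let $\phi_X \colon H_{2k}(X, \mathbb{R}) \to N_k(X)$ denote the natural surjection from Betti homology to numerical cycles, and define $\phi_Y$ analogously; these maps intertwine the pushforward $\pi_{*}$ and carry homologically effective cycle classes to numerically effective ones.

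First I would verify that $\phi_X$ sends the homological pseudo-effective cone surjectively onto $\Eff_k(X)$, and that the homological analogues of the cone $\Eff_k(\pi)$ and the subspace $N_k(\pi)$ surject onto their numerical counterparts. This is essentially formal: effective cycles generate both sides compatibly, and $\phi_X$ is a continuous linear surjection between finite-dimensional $\mathbb{R}$-vector spaces, so closures of cones spanned by matched sets of generators behave compatibly.

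The central step is then a lifting. Given $\alpha \in \Eff_k(X) \cap \ker \pi_{*}$, I would produce a class $\tilde{\alpha}$ in the homological pseudo-effective cone of $X$ with $\phi_X(\tilde{\alpha}) = \alpha$ and $\pi_{*}^{\mathrm{hom}}\tilde{\alpha} = 0$ in $H_{2k}(Y, \mathbb{R})$. Starting from any psef homological lift $\tilde{\alpha}'$ of $\alpha$ (which exists by the preceding paragraph), the obstruction $\pi_{*}^{\mathrm{hom}}(\tilde{\alpha}')$ is psef on $Y$ and, via $\phi_Y$, maps to $\pi_{*}\alpha = 0$, so it lies in the intersection of the homological psef cone of $Y$ with $\ker \phi_Y$. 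I would invoke the surjectivity of $\pi_{*}$ on homological pseudo-effective cones (a Betti analogue of \cite[Cor.~3.22]{fl13}) to find a class $\delta$ in the homological psef cone of $X$, lying in $\ker \phi_X$, whose pushforward equals this obstruction; then $\tilde{\alpha} := \tilde{\alpha}' - \delta$ has the desired properties, possibly after a positive rescaling to keep everything inside the cone. Applying the homological PC to $\tilde{\alpha}$ places it in the homological analogue of $\Eff_k(\pi)$ (Strong) or $N_k(\pi)$ (Weak), and pushing via $\phi_X$ yields the numerical statement.

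The hard part will be the lifting step. Because numerical and homological equivalence are not known to coincide on cycles (Grothendieck's standard conjecture D), $\ker \phi_Y$ can meet the homological psef cone of $Y$ nontrivially, so one cannot argue that $\pi_{*}^{\mathrm{hom}}(\tilde{\alpha}')$ is automatically zero; the adjustment $\delta$ must be constructed explicitly. Preserving positivity after subtracting $\delta$ is also delicate, so in practice I would approximate $\alpha$ by $\alpha + \epsilon\omega$ with $\omega$ in the interior of $\Eff_k(X)$, perform the lift for each perturbation, and pass to the limit using that the numerical conclusion is a closed condition.
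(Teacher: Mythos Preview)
Your framework---lift to homology via $\phi_X$, apply the homological PC, and descend---is exactly the paper's. But you are missing the single observation that collapses the ``hard part'' you identify: a pseudo-effective homology class that is numerically zero is already zero in homology. Indeed, \cite[Lemma~2.2]{djv13} shows that a psef class $\beta \in H_{2k}(Y,\mathbb{R})$ vanishes iff $h^k \cdot \beta = 0$ for an ample $h$; Proposition~\ref{numchar} gives the identical numerical criterion; and the pairing with $h^k$ factors through $\phi_Y$. Thus $\Eff_k^{\mathrm{hom}}(Y) \cap \ker \phi_Y = \{0\}$, contrary to your worry about standard conjecture~D (which is irrelevant here precisely because of the psef hypothesis). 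Your obstruction $\pi_*^{\mathrm{hom}}(\tilde\alpha')$ therefore vanishes automatically, and \emph{any} psef homological lift of $\alpha$ already lies in $\ker \pi_*^{\mathrm{hom}}$. No correction term $\delta$, no rescaling, and no $\epsilon$-perturbation is needed.

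Without that observation your correction step has a real gap anyway: surjectivity of $\pi_*$ on homological psef cones produces some psef preimage $\delta$ of the obstruction, but gives no reason for $\delta$ to lie in $\ker \phi_X$, which you need so that $\tilde\alpha' - \delta$ still maps to $\alpha$. With the observation in hand, the only substantive point remaining is that $\phi_X$ maps $\Eff_k^{\mathrm{hom}}(X)$ (and hence its $\pi_*^{\mathrm{hom}}$-contracted face) onto $\Eff_k(X)$ (and its $\pi_*$-contracted face); this is the \cite[Cor.~3.22]{fl13}-style argument you already anticipate, and it is exactly what the paper invokes as well.
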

\begin{proof}
\cite[Lemma 2.2]{djv13} shows that a \emph{pseudo-effective} cycle class $\alpha \in H_{2k}(X,\mathbb{R})$ is $0$ if and only if $h^{k} \cdot \alpha = 0$ for some/any ample divisor class $h$.  Applying the previous corollary, this is true if and only if the corresponding numerical equivalence class is $0$.  The face (or subspace) of contracted pseudo-effective homology classes maps surjectively onto the face (or subspace) of contracted pseudo-effective numerical classes. The surjectivity is clear for the linear subspace; the statement for the face requires an argument similar to the proof of \cite[Corollary 3.22]{fl13}. The conclusion of the PC clearly descends.
\end{proof}


Just as in \cite{djv13}, we then see that to prove the PC conjectures for morphisms $\pi: X \to Y$, it suffices to consider the case when:
\begin{itemize}
\item $X$ is smooth, and
\item $\pi$ is surjective with connected fibers.
\end{itemize}
In fact, given any morphism $\pi$, we can precompose by an alteration and use a Stein factorization to reduce to this situation.  

\subsection{Counterexamples to generalizations}

We next show that versions of the PC fail if we alter the hypotheses.  



\begin{exmple}
We show that the Weak Conjecture for nef classes is false.  That is, we find a nef class $\alpha$ with $\pi_{*}\alpha = 0$ which fails to be contained in the subspace generated by cycles with $\pi$-vertical support. 

Let $(S,\theta)$ be a very general principally polarized  complex abelian surface. Put $X=S\times S$. The numerical space $N^1(X)$ has a basis given by  $\{\theta_1,\theta_2,\lambda\},$ where $\theta_1$ and $\theta_2$ are the pullbacks of $\theta$ via the two projections, and $\lambda=c_1(\mathcal P)$, where $\mathcal P$ is the Poincar\' e line bundle on $X$.  Furthermore, the product map
$\Sym^2N^1(X)\to N^2(X)$
is an isomorphism. 

By \cite[Example 4.3]{delv11}, the class $\beta=8\theta_1\theta_2+3\lambda^2$ is a nef surface class. Note that $\beta \cdot\theta_1^2=0$, so that the first projection $\pi: X \to S$ yields $\pi_{*}\beta = 0$.  However, $\beta \not \in N_{k}(\pi)$ as this space is generated by $\theta_{1}^{2}, \theta_{1} \theta_{2}, \theta_{1} \lambda$.  


 We note in passing that the Weak Conjecture for pseudo-effective classes holds for $\pi$, as can easily be seen by the explicit computations of \cite[Theorem 4.1]{delv11}.  In particular $\beta$ is nef but not pseudo-effective as explained in \cite{delv11}.  \qed 
\end{exmple}

We also show that it is essential that our cone face is coming from a morphism -- even with a very minor change to the construction of the face, the analogue of the PC is false.

\begin{exmple}[Semiample intersections are necessary]\label{nefcounterexample}
If $h$ is an ample class on $Y$, consider the semiample class $\eta=\pi^*h$ and $\varphi_{\eta}$ 
the linear form on $N_k(X)$ given by $\varphi_{\eta}(\alpha)=\alpha\cdot\eta^k$.
Via Proposition \ref{numchar}, the PC say that the pseudoeffective face $\Eff_k(X)\cap\ker\varphi_{\eta}$ 
should be determined by the effective face ${\rm Eff}_k(X)\cap\ker\varphi_{\eta}$. 

From this perspective, even the Weak Conjecture may fail if $\eta$ is only nef instead of semiample: Mumford constructs an example (cf. \cite[Example 1.5.2]{lazarsfeld04}) of a surface $X$ with a nef Cartier divisor class $\eta$ such that $\eta^2=0$, and $\eta$ has positive intersection with any curve.
In particular the nonzero $\eta\in\Eff_k(X)\cap\ker\varphi_{\eta}$ cannot be determined by the empty face ${\rm Eff}_k(X)\cap\ker\varphi_{\eta}$.
\qed
\end{exmple}

\section{Exceptional classes} \label{sec:excclasses}

As discussed in the introduction, a key idea is to study the Strong and Weak Conjectures separately for movable classes and for ``rigid'' classes.  The prototypical example of a rigid divisor is an exceptional divisor for a morphism.  In this section we define and study an analogous notion for arbitrary cycle classes.
We start by recalling a useful lemma concerning cones.

\begin{lem}\label{irraprox}Let $C$ be a closed full-dimensional salient convex cone generated by a set $\{c_i\}$ inside a finite dimensional vector space. Let $\alpha\in C$ span an extremal ray. Then there exists a subsequence $\{c_j\}$ of $\{c_i\}$ and positive real numbers $r_j$ such that $\alpha=\lim_{j\to\infty}r_jc_j$.\end{lem}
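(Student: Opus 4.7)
The plan is to reduce the lemma to a classical fact about extreme points of compact convex bodies, by passing to a cross section of the cone. Since $C$ is closed, salient and full-dimensional, the interior of its dual cone $C^{\vee}$ is nonempty and consists of linear functionals that are strictly positive on $C\setminus\{0\}$. Fixing such a $\phi$, the affine slice $K := C \cap \{\phi = 1\}$ is a compact convex body, and the map $c \mapsto c/\phi(c)$ sets up a bijection between nonzero rays of $C$ and points of $K$, under which extremal rays correspond to extreme points. Discarding any $c_i = 0$ and setting $c_i' := c_i/\phi(c_i) \in K$, the hypothesis that $C$ is generated by $\{c_i\}$ translates into $K = \overline{\operatorname{conv}}\{c_i'\}$, while the extremality of $\alpha$ says that $\alpha' := \alpha/\phi(\alpha)$ is an extreme point of $K$.

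The heart of the matter is then Milman's converse to the Krein--Milman theorem: every extreme point of $\overline{\operatorname{conv}}\{c_i'\}$ lies in the closure $\overline{\{c_i'\}}$. In our finite dimensional setting I would prove this directly via Carath\'eodory. Writing $\alpha' = \lim_n y_n$ with $y_n \in \operatorname{conv}\{c_i'\}$, Carath\'eodory expresses each $y_n$ as a convex combination of at most $d+1$ of the $c_i'$, where $d$ is the ambient dimension. Compactness of $K$ lets me pass to a subsequence along which all of the (bounded) coefficients and all of the (bounded) selected vectors converge, yielding a representation $\alpha' = \sum_{k=1}^{d+1} t_k s_k$ with $t_k \geq 0$, $\sum t_k = 1$, and $s_k \in \overline{\{c_i'\}} \subseteq K$. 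Extremality of $\alpha'$ in $K$ then forces $\alpha' = s_k$ whenever $t_k > 0$, so $\alpha' \in \overline{\{c_i'\}}$, and hence some subsequence $c_{i_j}'$ converges to $\alpha'$.

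To finish, I would unwind the normalization: from $c_{i_j}' = c_{i_j}/\phi(c_{i_j}) \to \alpha' = \alpha/\phi(\alpha)$, setting $r_j := \phi(\alpha)/\phi(c_{i_j}) > 0$ gives $r_j c_{i_j} \to \alpha$, as required. The only substantive step is the Carath\'eodory/Milman extraction; the rest is just bookkeeping relating the cone $C$ to its compact base $K$.
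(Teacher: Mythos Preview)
Your argument is correct. The reduction to a compact base via a strictly positive linear functional, followed by the Carath\'eodory-plus-compactness proof of Milman's partial converse to Krein--Milman, is the standard route to this statement, and all the steps (in particular the identification $K=\overline{\operatorname{conv}}\{c_i'\}$ and the extraction of convergent subsequences) are handled carefully.

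As for comparison: the paper does not actually supply a proof. The lemma is introduced with ``We start by recalling a useful lemma concerning cones'' and is stated without argument, so there is nothing to compare against beyond noting that your proof fills in exactly what the authors regarded as well known.
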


\begin{prop} \label{prop:pushforwardfrome} Let $X$ be a projective variety and let $\alpha \in \Eff_{k}(X)$ be an extremal ray.  Suppose that there is an effective Cartier divisor $E$ such that $\alpha \cdot E$ is not pseudo-effective.  Then there is some component $E_{1}$ of $E$ and a pseudo-effective class $\beta \in N_{k}(E_{1})$ such that $\alpha$ is the pushforward of $\beta$ under the inclusion map.
\end{prop}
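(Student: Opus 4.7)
The plan is to use Lemma \ref{irraprox} to approximate $\alpha$ by rescaled classes of irreducible subvarieties, then to localize those subvarieties inside a single component of $E$, and finally to bootstrap the resulting convergence in $N_k(X)$ to a convergence inside the pseudo-effective cone of that component.

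First, by Lemma \ref{irraprox} applied to $\Eff_k(X)$ and its generating set of classes of irreducible $k$-dimensional subvarieties, write $\alpha=\lim_{j\to\infty} r_j[Z_j]$ with each $Z_j$ irreducible and $r_j>0$. If $Z_j$ is not contained in any component of $E=\sum a_iE_i$, then the proper intersection realizes $[Z_j]\cdot E$ as the class of an effective $(k-1)$-cycle, so $[Z_j]\cdot E\in\Eff_{k-1}(X)$. Were this the case for all but finitely many $j$, closedness of $\Eff_{k-1}(X)$ would force $\alpha\cdot E\in\Eff_{k-1}(X)$, contradicting our hypothesis. Therefore infinitely many $Z_j$ are contained in components of $E$, and since $E$ has only finitely many components, pigeonholing and passing to a subsequence lets us assume every $Z_j$ lies in a single fixed component $E_1$.

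Let $\iota:E_1\hookrightarrow X$ denote the inclusion, and let $\tilde{\beta}_j\in\Eff_k(E_1)$ denote the class $r_j[Z_j]$ computed in $N_k(E_1)$, so that $\iota_*\tilde{\beta}_j=r_j[Z_j]\to\alpha$ in $N_k(X)$. The central step is to extract a convergent subsequence of $\{\tilde{\beta}_j\}$ inside the finite-dimensional space $N_k(E_1)$. Fix an ample class $A$ on $X$; then $\iota^*A$ is ample on $E_1$, and by the projection formula
\[
\tilde{\beta}_j\cdot(\iota^*A)^k \;=\;(\iota_*\tilde{\beta}_j)\cdot A^k \;\longrightarrow\;\alpha\cdot A^k.
\]
In particular the numbers $\tilde{\beta}_j\cdot(\iota^*A)^k$ are bounded. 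I would then argue that $\{\tilde{\beta}_j\}$ is itself bounded in $N_k(E_1)$: if not, normalizing by any fixed norm would produce (after passing to a subsequence) a nonzero limit $\gamma\in\Eff_k(E_1)$ satisfying $\gamma\cdot(\iota^*A)^k=0$, contradicting Proposition \ref{numchar} applied to the identity $E_1\to E_1$ with the ample class $\iota^*A$.

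By compactness of balls in finite dimensions one can then pass to a further subsequence with $\tilde{\beta}_j\to\beta\in\Eff_k(E_1)$, and the continuity of $\iota_*$ yields $\iota_*\beta=\alpha$, as required. The main obstacle is the boundedness step just described: the pushforward $\iota_*:N_k(E_1)\to N_k(X)$ may have a nontrivial kernel, so convergence of the $\iota_*\tilde{\beta}_j$ in $N_k(X)$ does not automatically give convergence of the $\tilde{\beta}_j$ upstairs. It is precisely the strict positivity of top ample intersections on pseudo-effective classes, encoded in Proposition \ref{numchar}, that rules out escape to infinity along this kernel. The extremality hypothesis on $\alpha$, by contrast, enters only in the first step, to secure an approximation by irreducible subvarieties rather than arbitrary effective cycles.
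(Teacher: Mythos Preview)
Your proof is correct and follows essentially the same approach as the paper's: approximate $\alpha$ by rescaled irreducible subvarieties via Lemma~\ref{irraprox}, pigeonhole into a single component $E_1$, then use the projection formula with a restricted ample class to get boundedness and extract a convergent subsequence in $\Eff_k(E_1)$. The only cosmetic difference is that the paper cites \cite[Corollary 3.15]{fl13} (which is exactly Proposition~\ref{numchar}) directly for boundedness, whereas you unpack this via the normalization-and-contradiction argument; these are the same reasoning.
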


\begin{proof} We use Lemma \ref{irraprox} to write $\alpha=\lim_{j\to\infty}\alpha_j$ where $\alpha_{j} = r_{j}[S_{j}]$ for some positive real number $r_{j}$ and some irreducible $k$-dimensional subvariety  $S_j$.

Let $\{ E_{i} \}_{i=1}^{r}$ denote the components of $E$.  Note that there is some index $i$ such that an infinite subsequence of the $S_j$ is contained in $E_i$.  If this were not the case, then the classes $E \cdot S_{j}$ would be pseudo-effective for sufficiently large $j$, hence $E \cdot \alpha$ would also be pseudo-effective.  Therefore up to passing to a subsequence and renumbering the $i$'s, we can assume that $S_j\subset E_1$ for all $j$. Define the pseudo-effective classes
$$\alpha'_j=r_j[S_j]\mbox{ in }N_k(E_{1}).$$
Then letting $\imath:E_1\to X$ denote the closed embedding, we have $\alpha_j=\imath_*\alpha'_j$. Fix an ample class $h$ on $X$.  By the projection formula, 
$$(h|_{E_1})^k\cdotp\alpha'_j=h^k\cdotp\alpha_j.$$ 
\cite[Corollary 3.15]{fl13} implies that $\alpha'_j$ is a bounded sequence in $N_k(E_1)$ so that  we can extract a convergent subsequence.   If $\alpha'$ denotes the limit, then by continuity, $\imath_*\alpha'=\alpha$.
\end{proof}

\subsection{Exceptional classes}

\begin{defn} \label{def:exceptional}
Let $\pi: X \to Y$ be a morphism of projective varieties of relative dimension $e$ and fix $k\geq e$.  We say that $\alpha \in \Eff_{k}(X)$ is an exceptional (pseudo-effective) class of $\pi$ if it satisfies the following equivalent conditions:
\begin{itemize}
\item there is some (equivalently any) ample divisor $H$ on $Y$ such that $\alpha \cdot \pi^{*}H^{k-e} = 0$, or equivalently
\item there is some (equivalently any) ample divisor $A$ on $X$ such that $\pi_*(\alpha\cdot A^e)=0$.
\end{itemize}
\end{defn}

\begin{exmple}  For effective classes, being $\pi$-exceptional is a geometric condition.  Suppose that $Z$ is a subvariety of $X$.  Then $[Z]$ is $\pi$-exceptional if and only if ${\rm reldim}(X/Y) < {\rm reldim}(Z/\pi(Z))$.  (Indeed, keeping the notation from Definition \ref{def:exceptional}, note that the condition $[Z]\cdot\pi^*H^{k-e}=0$ implies $\pi|_Z^*H^{k-e}=0$.  The injectivity of $\pi|_Z^*$ for dominant maps then says $k-e>\dim\pi(Z)$.)
\end{exmple}

\begin{exmple} \label{exceptionaldivisorexample}
Suppose $E$ is an effective divisor.  Then $[E]$ is exceptional for a morphism $\pi$ precisely when $\mathrm{codim}(\pi(E)) \geq 2$, that is, when $E$ is exceptional by the usual definition.

In fact, it follows from Theorem \ref{exceptionalclassesthrm} below that any pseudo-effective $\pi$-exceptional divisor class $\alpha$ is represented by an effective divisor that is exceptional in the traditional sense.
\end{exmple}

\begin{exmple} \label{exmple:exceptionalforbirational}
Let $\pi: X \to Y$ be birational, or only generically finite and dominant.  Then a pseudo-effective class is $\pi$-contracted if and only if it is $\pi$-exceptional.
\end{exmple}

\begin{exmple}
A pseudo-effective curve class can only be exceptional for generically finite dominant maps.
\end{exmple}

\begin{lem}\label{flatexceptional}
Suppose $\pi: X \to Y$ is an equidimensional morphism of projective varieties.  Then there are no $\pi$-exceptional classes on $X$ besides the $0$ class.
\end{lem}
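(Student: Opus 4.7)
The plan is to leverage the alternative formulation of $\pi$-exceptionality via pushforward, and then exploit equidimensionality to rule out any nonzero pseudo-effective class.

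First I will set up the projection-formula reduction. Assuming $\alpha\in\Eff_k(X)$ is $\pi$-exceptional, fix a very ample divisor $A$ on $X$; by the basepoint-freeness of $A^e$, the product $\alpha\cdot A^e$ lies in $\Eff_{k-e}(X)$, so its pushforward $\pi_*(\alpha\cdot A^e)$ lies in $\Eff_{k-e}(Y)$. Intersecting the class equation $\alpha\cdot\pi^*H^{k-e}=0$ with $A^e$ and applying the projection formula gives
\[
0 \;=\; \alpha\cdot\pi^*H^{k-e}\cdot A^e \;=\; \pi_*(\alpha\cdot A^e)\cdot H^{k-e}.
\]
Since $\pi_*(\alpha\cdot A^e)\in\Eff_{k-e}(Y)$ and $H^{k-e}$ pairs strictly positively with any nonzero pseudo-effective $(k-e)$-class on $Y$ (Proposition \ref{numchar} applied to the identity map of $Y$), we conclude $\pi_*(\alpha\cdot A^e)=0\in N_{k-e}(Y)$. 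This is exactly the equivalent condition appearing in Definition \ref{def:exceptional}.

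Next I will exploit equidimensionality on the level of irreducible subvarieties. For any irreducible $k$-dimensional subvariety $Z\subset X$, equidimensionality forces $\dim\pi(Z)\geq k-e$. Choosing general members $A_1,\dots,A_e\in|mA|$, the cycle $V:=Z\cap A_1\cap\cdots\cap A_e$ has pure dimension $k-e$, and a dimension count using the equidimensional fibers of $\pi$ shows that the generic fiber of $\pi|_V\colon V\to\pi(V)$ is zero-dimensional, so $\pi(V)$ has dimension exactly $k-e$ and $\pi_*[V]$ is a positive multiple of $[\pi(V)]$. Consequently, the linear functional $\ell(\beta):=\beta\cdot\pi^*H^{k-e}\cdot A^e$ is strictly positive on the class of every irreducible $k$-dimensional subvariety of $X$.

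Finally, to promote this to all of $\Eff_k(X)$, I would argue as follows. The set $F:=\{\beta\in\Eff_k(X):\beta\cdot\pi^*H^{k-e}\cdot A^e=0\}$ is a face of $\Eff_k(X)$ (since $\pi^*H^{k-e}\cdot A^e$ is a product of basepoint-free classes, hence nef in $N^k(X)$). Every extremal ray of $F$ is also an extremal ray of $\Eff_k(X)$, and by Lemma \ref{irraprox} can be written as the limit $\alpha=\lim r_j[Z_j]$ of positive multiples of classes of irreducible subvarieties $Z_j$. Passing to normalized classes $[Z_j]/\|[Z_j]\|$ on the compact unit sphere in $\Eff_k(X)$ and applying the strict positivity of $\ell$ on each $[Z_j]$, combined with the positivity we established above, would yield $\alpha\in F\Rightarrow\alpha=0$.

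The main obstacle is the last paragraph: one must upgrade the pointwise strict positivity of $\ell$ on classes of irreducible subvarieties to a \emph{uniform} lower bound on normalized classes. The cleanest way to achieve this is to show that the class $\pi^*H^{k-e}\cdot A^e$ actually lies in the interior of $\Nef^k(X)$ when $\pi$ is equidimensional; one possibility is to compare $(\pi^*H+A)^k$ (which is strictly positive on $\Eff_k(X)\setminus\{0\}$ by Kleiman's criterion for the ample class $\pi^*H+A$) with the mixed term $\binom{k}{e}\pi^*H^{k-e}\cdot A^e$ appearing in its binomial expansion, using that the remaining terms are nef and hence pair nonnegatively with $\alpha$. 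Once such a bound is in hand, $\alpha\cdot\pi^*H^{k-e}\cdot A^e=0$ together with $\alpha\in\Eff_k(X)$ forces $\alpha=0$.
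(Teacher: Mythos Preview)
Your first two steps are sound, and you correctly identify the gap: strict positivity of $\ell(\beta)=\beta\cdot\pi^*H^{k-e}\cdot A^e$ on classes of irreducible subvarieties does not immediately give strict positivity on all of $\Eff_k(X)$. However, your proposed fix via the binomial expansion of $(\pi^*H+A)^k$ does not close this gap. The expansion gives $(\pi^*H+A)^k\cdot\alpha=\sum_i\binom{k}{i}(\pi^*H^i\cdot A^{k-i})\cdot\alpha$ with each summand nonnegative and total strictly positive for $\alpha\neq 0$, but this only tells you that \emph{some} summand is positive, not that the particular term with $i=k-e$ is. Nothing in that argument uses equidimensionality, and indeed the claim that $\pi^*H^{k-e}\cdot A^e$ lies in the interior of $\Nef^k(X)$ is false without it (take $\pi$ birational, so $e=0$ and $\pi^*H^k$ vanishes on any $\pi$-contracted class).

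The paper's proof instead upgrades your first step from a vanishing number to a vanishing class. It takes a general complete intersection $i:Z\hookrightarrow X$ of codimension $e$; a separate lemma, using equidimensionality, shows that $\pi|_Z$ is finite. The restriction $i^*\alpha$ is pseudo-effective on $Z$ (restriction of a pseudo-effective class to an ample complete intersection stays pseudo-effective), and your first-paragraph computation gives $(\pi|_Z)_*(i^*\alpha)=\pi_*(\alpha\cdot[Z])=0$. Now Corollary~\ref{finitecase} (the finite case, which you never invoke) forces $i^*\alpha=0$, hence $\alpha\cdot[Z]=0$ as a class in $N_{k-e}(X)$, not merely after further pairing with $\pi^*H^{k-e}$. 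Since $[Z]$ is a complete intersection of amples it lies in the interior of $\Nef^e(X)$, so $\alpha\cdot[Z]=0$ yields $\alpha=0$. The missing idea is to restrict to a subvariety on which the map becomes finite, rather than to try to show directly that the single number $\ell(\alpha)$ controls $\alpha$.
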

\begin{proof}
Let $Z$ be a complete intersection on $X$ of codimension $e={\rm reldim}(\pi)$
with embedding morphism $i:Z\to X$. Let $f=\pi|_Z$. By Lemma \ref{lem:cutdownflatmap}, if we choose $Z$ general we may ensure that $f$ is finite. If $\alpha$ is $\pi$-exceptional, then $f_*(i^*\alpha)=0$. By Lemma \ref{psefrestriction} below, $i^*\alpha$ is pseudoeffective. By Corollary \ref{finitecase} and the projection formula, $\alpha\cdot [Z]=0$ on $X$, which implies $\alpha=0$ since $[Z]$ is in the interior of $\Nef^e(X)$.
\end{proof}

\begin{lem} \label{lem:cutdownflatmap}
Let $\pi: X \to Y$ be a map of projective varieties, with equidimensional fibers of relative dimension $\geq 1$.  Fix a very ample divisor $H$ on $X$.  For some sufficiently large integer $m$ and for a general member $A$ of $|mH|$, the fibers of $\pi: A \to Y$ are equidimensional. 
\end{lem}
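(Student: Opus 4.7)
Let $e \geq 1$ denote the relative dimension of $\pi$. Since $H$ is very ample, $|mH|$ is basepoint free for every $m \geq 1$, so a general member meets every positive-dimensional closed subscheme of $X$. It therefore suffices to produce an $A \in |mH|$ that contains no irreducible component of any fiber of $\pi$: for such an $A$, each intersection $A \cap \pi^{-1}(y)$ is non-empty and cut down in pure codimension $1$ inside the equidimensional fiber $\pi^{-1}(y)$, hence has pure dimension $e-1$, making $\pi|_A$ equidimensional of relative dimension $e-1$.

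The key ingredient, and the main obstacle, is a boundedness statement: the collection $\mathcal{F}$ of irreducible components of fibers of $\pi$ forms a bounded family of $e$-dimensional subvarieties of $X$. I would establish this using the relative Chow scheme $\Chow_e(X/Y)$, which decomposes as a countable disjoint union of projective $Y$-schemes indexed by degree, combined with a generic-flatness stratification $Y = \bigsqcup_i Y_i$ on whose strata $\pi$ is flat. On each stratum the Hilbert polynomial of the fiber is locally constant, so the $H$-degrees of its irreducible components are bounded; Noetherian induction on $Y$ combines the finitely many strata into a uniform bound. Hence there is a finite-type parameter scheme $T$ with a universal closed subscheme $\mathcal{Z} \subset T \times X$ whose fibers $Z_t$ range exactly over $\mathcal{F}$.

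With boundedness in hand, the remainder is a standard incidence-variety argument. Set
\begin{equation*}
I_m = \{(t,A) \in T \times |mH| : Z_t \subseteq A\}.
\end{equation*}
For $m$ sufficiently large, Serre vanishing applied to $0 \to \mathcal{I}_{Z_t}(mH) \to \mathcal{O}_X(mH) \to \mathcal{O}_{Z_t}(mH) \to 0$ makes $H^0(X,mH) \to H^0(Z_t, mH|_{Z_t})$ surjective, so the fiber of $I_m \to T$ over $t$ is a linear subspace of $|mH|$ of codimension $h^0(Z_t, mH|_{Z_t})$. This codimension is a polynomial in $m$ of degree $e \geq 1$ with leading coefficient proportional to the $H$-degree of $Z_t$, hence grows as $\Theta(m^e)$ uniformly in $t$ once the $H$-degrees are uniformly bounded. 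Since $\dim T$ is fixed, we obtain $\dim I_m < \dim |mH|$ for $m \gg 0$, so the projection $I_m \to |mH|$ is not dominant and a general $A \in |mH|$ contains no $Z_t$, completing the proof.
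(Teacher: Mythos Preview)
Your proof is correct and follows essentially the same route as the paper: reduce to showing that a general $A \in |mH|$ contains no irreducible component of any fiber, then use an incidence correspondence together with the growth $h^0(Z_t, mH) \sim c\, m^e$ from Serre vanishing to beat the dimension of the parameter space. The only cosmetic difference is bookkeeping: the paper treats components of the general fiber first (building an explicit parameter variety $Z$ with $\dim Z = \dim Y$ via the geometric generic fiber) and then disposes of the special fibers separately, whereas you package all fiber components at once via a degree bound and the Chow scheme; both arguments rest on the same degree/Hilbert-polynomial bound and the same dimension count.
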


\begin{proof}
For degree reasons there is an upper bound on the number of components of a fiber of $\pi$. We first show that the supports of the irreducible components of a general fiber of $\pi$ are parameterized by a quasi-projective variety $Z$ dominating $Y$ generically finitely. In particular $\dim Z=\dim Y$. For the claim, let $k$ denote the base field, and note that there exists a finite extension $K(Y)\subset K$ such that every irreducible component of $X_K$ as a scheme over ${\rm Spec}\, K$ is geometrically irreducible. Let $\mathcal T$ be the support of a component of $X_K$ that dominates the generic fiber of $\pi$. Then there exists a projective morphism $\rho:T\to Z$ over $k$ with $Z$ affine, $K(Z)=K$, with $\mathcal T$ the generical fiber of $\rho$, and $Z$ finite dominant over an affine open subset of $Y$. Furthermore $\rho$ has irreducible and reduced fibers, each a component of a general fiber of $\pi$.  

Returning to the lemma, it suffices to show that there are elements of $|mH|$ which do not contain a component of a fiber of $\pi$ over any general closed point of $Y$.  Fix a general fiber $F$ of $\pi$ and let $F_{1},\ldots,F_{s}$ be its components.  Consider the exact sequence:
\begin{equation*}
H^{0}(X,\mathcal{I}_{F_{i}} \otimes \mathcal{O}_{X}(mH)) \hookrightarrow H^{0}(X,\mathcal{O}_{X}(mH)) \to H^{0}(F_{i},\mathcal{O}_{F_{i}}(mH)) \to H^{1}(X,\mathcal{I}_{F_{i}} \otimes \mathcal{O}_{X}(mH))
\end{equation*}
For $m$ sufficiently large, the last term vanishes for every $i$ and the dimension of the next to last term is larger than $\dim\, Z=\dim\, Y$ for every $i$.  Furthermore, since $F$ is general the same statements will hold for any general fiber of $\pi$.  Thus, $\dim |mH|$ is strictly greater than the dimension of the space of divisors containing a component of $F$ plus $\dim\, Y$.  Constructing the incidence correspondence, one sees that the general element of $|mH|$ does not contain a component of any general fiber.  After removing the proper closed subset of $|mH|$ parametrizing divisors which contain a component of the special fibers, we obtain the conclusion of the theorem.
\end{proof}


\begin{lem}\label{psefrestriction}
Let $X$ be a projective scheme, and let $D$ be a complete intersection of ample divisors of codimension $d$ with embedding morphism $\imath:D\hookrightarrow X$. If $\alpha\in\Eff_k(X)$, then $\alpha|_D:=\imath^*\alpha\in\Eff_{k-d}(D)$.
\end{lem}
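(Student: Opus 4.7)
The plan is to induct on the codimension $d$. Write $D=A_1\cap\cdots\cap A_d$ with $A_i$ ample Cartier divisors on $X$, and set $D_j=A_1\cap\cdots\cap A_j$ so that $D_0=X$ and $D_d=D$. Each inclusion $D_{j+1}\hookrightarrow D_j$ is the inclusion of an ample Cartier divisor on the projective scheme $D_j$, since ampleness is preserved by restriction. Thus it suffices to prove the base case $d=1$: for any projective scheme $W$ and any ample Cartier divisor $A\subset W$ with embedding $\jmath:A\hookrightarrow W$, the Gysin pullback $\jmath^*:N_k(W)\to N_{k-1}(A)$ sends $\Eff_k(W)$ into $\Eff_{k-1}(A)$.

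For the base case, the map $\jmath^*$ is continuous, so by the definition of $\Eff_k(W)$ as the closure of the cone generated by classes of irreducible $k$-dimensional subvarieties it suffices to check that $\jmath^*[V]\in\Eff_{k-1}(A)$ for every such $V$. Split into two cases according to whether $V$ is contained in $A$. If $V\not\subset A$, then $V\cap A$ has pure codimension one in $V$, and standard intersection theory for regularly embedded Cartier divisors gives $\jmath^*[V]=[V\cap A]$, which is effective (with scheme-theoretic multiplicities). If instead $V\subset A$, the excess intersection formula identifies $\jmath^*[V]$ with the class $c_1(\mathcal{O}_W(A)|_V)\cap [V]\in N_{k-1}(V)$, pushed forward along the closed immersion $V\hookrightarrow A$. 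Since $A$ is ample on $W$, the line bundle $\mathcal{O}_W(A)|_V$ is ample on the projective variety $V$, so some positive multiple of $A|_V$ is represented by an effective Cartier divisor on $V$; hence $c_1(\mathcal{O}_W(A)|_V)\cap[V]$ lies in $\Eff_{k-1}(V)$, and its pushforward to $A$ is pseudo-effective.

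The only step requiring care is the identification of $\jmath^*[V]$ in each case, which follows from the standard theory of refined Gysin pullback for regularly embedded Cartier divisors (\cite[Chapter 6]{fulton84}). Once these identifications are in place, the rest is formal: continuity of intersection, the fact that closed pushforward preserves pseudo-effectivity, and the elementary observation that an ample Cartier divisor class on a projective variety is pseudo-effective (as some positive multiple admits a global section). No essential obstacle is anticipated; the only conceptual subtlety is arranging the induction so that each successive restriction is to an ample divisor in the correct ambient projective scheme, which is exactly what the hypothesis of a complete intersection of ample divisors provides.
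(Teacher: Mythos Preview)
Your proof is correct and follows essentially the same approach as the paper: induct on $d$ to reduce to a single ample divisor, use continuity to reduce to an irreducible subvariety $V$, and then split into the proper-intersection case (yielding an effective cycle) and the $V\subset A$ case (where the excess intersection formula gives the pushforward of $c_1(\mathcal{O}_X(A)|_V)\cap[V]$, which is pseudo-effective since $\mathcal{O}_X(A)|_V$ is ample). Your write-up is somewhat more explicit about setting up the inductive chain $D_0\supset D_1\supset\cdots\supset D_d$ and noting that ampleness is preserved under restriction, but the argument is the same.
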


\begin{proof}By induction we can assume that $D$ is a divisor. By continuity and additivity of intersections, we can assume that $\alpha=[Z]$ for some irreducible subvariety $Z$ of $X$.
If $D$ and $Z$ meet properly, then $[Z]|_D$ is the class of an effective cycle supported on $D\cap|Z|$. Otherwise $Z\subset D$ and then $[Z]|_D$ is the pushforward of $c_1(\mathcal O_Z(D))\cap[Z]$ from $Z$ to $D$. But $\mathcal O_Z(D)$ is ample and the conclusion follows.
\end{proof}

The key lemma controlling the behavior of exceptional classes is the following.

\begin{lem} \label{exceptionalintersectsnonpsef}
Let $\pi: X \to Y$ be a dominant morphism of projective varieties. There is a birational model $f_{X}: X' \to X$ and an effective Cartier divisor $E$ on $X'$ satisfying the following condition: for any $k$ and for any $0\neq\alpha' \in \Eff_{k}(X')$ such that $f_{X*} \alpha'$ is a $\pi$-exceptional class, $E \cdot \alpha'$ is not pseudo-effective. Furthermore the support of $E$ does not dominate $Y$.

In the special case when $\pi$ is generically finite, we may take $X' = X$.
\end{lem}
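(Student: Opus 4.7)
The plan is to handle the generically finite case directly with $X' = X$, and to reduce the general case to it by Raynaud--Gruson flattening.

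\emph{Generically finite case.} Using Stein factorization and the classical negativity of contraction, I would construct an effective divisor $E$ on $X$ supported on the non-isomorphism locus of the birational factor of $\pi$ so that $-E$ is $\pi$-relatively ample; equivalently $\pi^*A - \epsilon E$ is ample for any ample $A$ on $Y$ and all sufficiently small $\epsilon > 0$. By Zariski's main theorem this locus maps into codimension $\geq 2$ in $Y$, giving the support condition. In the generically finite setting the $\pi$-exceptional condition coincides with $\pi_*\alpha = 0$ (Example \ref{exmple:exceptionalforbirational}), so let $0 \neq \alpha \in \Eff_k(X)$ with $\pi_*\alpha = 0$. The cone $\Eff_k(X) \cap \ker \pi_*$ is salient and is generated by classes of $\pi$-contracted irreducible subvarieties, so by Lemma \ref{irraprox} I reduce to $\alpha$ extremal and write $\alpha = \lim_j r_j [S_j]$ with $S_j$ irreducible and contained in fibers of $\pi$. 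Since $-E$ is $\pi$-ample, $-E|_{S_j}$ is ample on $S_j$, and iteratively intersecting shows $(-E)^i \cdot [S_j] \in \Eff_{k-i}(X)$ for all $i \geq 0$; passing to the limit gives $(-E)^i \cdot \alpha \in \Eff_{k-i}(X)$ for all $i$. If $E \cdot \alpha$ were pseudo-effective, salience forces $E \cdot \alpha = 0$, hence $E^i \cdot \alpha = 0$ for every $i \geq 1$, and
\[
(\pi^*A - \epsilon E)^k \cdot \alpha = (\pi^*A)^k \cdot \alpha = A^k \cdot \pi_*\alpha = 0,
\]
contradicting the strict positivity of an ample $k$-power against a nonzero pseudo-effective $k$-class. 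A decomposition of general $\alpha$ into finitely many extremal rays and termwise application of salience recovers the result in the non-extremal case.

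\emph{General case.} Apply Raynaud--Gruson flattening to produce a birational morphism $\mathrm{bu}: Y' \to Y$ and a birational $f_X: X' \to X$ with $g': X' \to Y'$ flat and composite $g = \pi \circ f_X = \mathrm{bu} \circ g'$. The exceptional locus of $f_X$ lies over the blown-up centers of $\mathrm{bu}$, which form a proper closed subset of $Y$. Apply the generically finite case to $f_X$ to obtain an effective divisor $E$ on $X'$ supported in $\mathrm{Exc}(f_X)$, with $\Supp(E)$ not dominating $Y$, and with the property that $E \cdot \gamma$ is not pseudo-effective whenever $0 \neq \gamma \in \Eff_j(X')$ satisfies $f_{X*}\gamma = 0$. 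Given $0 \neq \alpha' \in \Eff_k(X')$ with $f_{X*}\alpha'$ being $\pi$-exceptional, the projection formula gives $f_{X*}(g^*H^{k-e} \cdot \alpha') = \pi^*H^{k-e} \cdot f_{X*}\alpha' = 0$. If $f_{X*}\alpha' = 0$ then $\alpha'$ itself is $f_X$-contracted and the conclusion for $f_X$ applies directly. Otherwise set $\beta := g^*H^{k-e} \cdot \alpha'$: this is pseudo-effective with $f_{X*}\beta = 0$, and applying the conclusion for $f_X$ to $\beta$ yields $E \cdot \beta = g^*H^{k-e} \cdot (E \cdot \alpha')$ not pseudo-effective. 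If $E \cdot \alpha'$ were pseudo-effective, intersecting with the basepoint-free class $g^*H^{k-e}$ would produce a pseudo-effective class, a contradiction.

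The main obstacle is verifying $\beta \neq 0$ when $f_{X*}\alpha' \neq 0$: the composite $g$ is not flat (only $g'$ is), so Lemma \ref{flatexceptional} does not apply to $g$ directly. The plan is to factor $g = \mathrm{bu} \circ g'$, use Kodaira's lemma to write $\mathrm{bu}^*H = A' + E_0'$ with $A'$ ample and $E_0'$ effective on $Y'$, and use the flat-pullback identity $g^*H^{k-e} = g'^*(\mathrm{bu}^*H)^{k-e}$ together with Lemma \ref{flatexceptional} applied to $g'$ and $A'$ to force the leading term of the expansion of $\beta$ to be nonzero, with the remaining terms handled inductively.
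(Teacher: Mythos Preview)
Your generically finite case contains a circular step. You assert that ``the cone $\Eff_k(X) \cap \ker\pi_*$ is salient and is generated by classes of $\pi$-contracted irreducible subvarieties,'' and then use Lemma~\ref{irraprox} to approximate $\alpha$ by $r_j[S_j]$ with $S_j$ contained in fibers. But the claim that $\Eff_k(X)\cap\ker\pi_*$ is generated by $\pi$-contracted subvarieties is precisely the Strong Conjecture for $\pi$ (see \S\ref{sec:reductionsteps}), which this lemma is meant to help establish. Lemma~\ref{irraprox} applied to $\Eff_k(X)$ only gives $S_j$ irreducible, not $\pi$-contracted; and once $S_j$ is not in a fiber, $-E|_{S_j}$ need not be ample, so the chain $(-E)^i\cdot[S_j]\in\Eff_{k-i}(X)$ collapses. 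The subsequent ``decomposition of general $\alpha$ into finitely many extremal rays'' is also not available, since the cone need not be polyhedral.

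The paper avoids any approximation by contracted cycles. It argues by contradiction: assuming $E\cdot\alpha'$ is pseudo-effective, Lemma~\ref{dercycle} gives $\alpha'\cdot f_X^*A^e \succeq \alpha'\cdot A'^e$ for an ample $A'$ on $X'$, and pushing forward shows $\alpha'$ is exceptional for the \emph{flat} map $\pi'$; then Lemma~\ref{flatexceptional} forces $\alpha'=0$. The key point is that Lemma~\ref{dercycle} needs only the hypothesis $E\cdot\alpha'\in\Eff_{k-1}$ (no approximation), and the contradiction comes from the flat case where exceptional classes vanish. Your general-case strategy of reducing to the map $f_X$ is reasonable in spirit, but it inherits the gap from the generically finite case; and even granting that, your acknowledged obstacle ($\beta\neq 0$) is real --- the paper bypasses it entirely by working with $\pi'$ rather than $f_X$ and taking $E=\pi'^*F$ for an $f_Y$-antiample $F$ on $Y'$.
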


\begin{proof}
Let $\pi': X' \to Y'$ be a flattening of $\pi$ and let $f_{X}, f_{Y}$ denote the corresponding birational maps.  Set $e={\rm reldim}(\pi)$ and $\alpha = f_{X*}\alpha'$.  As in Definition \ref{def:exceptional} the condition that $\alpha$ is $\pi$-exceptional is equivalent 
to $$f_{Y*}\pi'_*(\alpha'\cdot f_X^*A^e)=0$$ for any ample class $A$ on $X$.  Let $F$ be an effective $f_Y$-anti-ample Cartier divisor on $Y'$. Then $E=\pi'^*F$ is an effective $f_X$-anti-ample Cartier divisor on $X'$ (since $f_X$ is the composition of a finite inclusion map with a base change of $f_Y$).  

Suppose that $E\cdot\alpha'$ is pseudo-effective.  By Lemma \ref{dercycle}, there is an ample divisor $A'$ on $X'$ satisfying $\alpha' \cdot f_X^{*}A^{e} \succeq \alpha' \cdot A'^{e}$.  Thus $f_{Y*}\pi'_*(\alpha'\cdot (A')^e)=0$ so that $\alpha'$ is $f_Y\circ\pi'$-exceptional. Furthermore, since
\begin{equation*}
F\cdot\pi'_*(\alpha'\cdot (A')^e)  = \pi_{*}(E\cdot\alpha' \cdot (A')^{e})
\end{equation*}
is pseudo-effective and $f_{Y}$ is generically finite, Lemma \ref{dercycle} implies
$\pi'_*(\alpha'\cdot (A')^e)=0$, i.e.~$\alpha'$ is $\pi'$-exceptional.  But this is impossible by Lemma \ref{flatexceptional}. 

To see the final statement, note that by considering the Stein factorization of $\pi$ one immediately reduces to the birational case; but then the flattening of $\pi$ is the identity map of $X$.
\end{proof}

\begin{lem}\label{dercycle}Let $\pi:X\to Y$ be a generically finite dominant morphism of projective varieties, and let $E$ be an effective $\pi$-antiample Cartier divisor on $X$. Let $\alpha\in\Eff_k(X)$ and let $H$ be an ample divisor on $Y$ such that $\pi^*H-E$ is ample. If $\alpha\cdot [E]\in\Eff_{k-1}(X)$, then $\alpha\cdot\pi^*H^e\succeq\alpha\cdot(\pi^*H-E)^e$ for any $1\leq e\leq k$. \end{lem}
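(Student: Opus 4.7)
The plan is to reduce everything to the case $e=1$ through an algebraic factorization. Using the identity $a^{e}-b^{e} = (a-b)\sum_{i=0}^{e-1} a^{e-1-i}b^{i}$, applied in the intersection ring of numerical classes with $a = \pi^{*}H$ and $b = \pi^{*}H - E$ (so $a-b = [E]$), I would write
\begin{equation*}
\pi^{*}H^{e} - (\pi^{*}H - E)^{e} \;=\; [E]\cdot Q, \qquad Q := \sum_{i=0}^{e-1} \pi^{*}H^{\,e-1-i}(\pi^{*}H - E)^{i} \in N^{e-1}(X).
\end{equation*}
Intersecting with $\alpha$, the desired statement $\alpha \cdot \pi^{*}H^{e} \succeq \alpha \cdot (\pi^{*}H - E)^{e}$ becomes the assertion that $(\alpha \cdot [E]) \cdot Q$ lies in $\Eff_{k-e}(X)$.

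By hypothesis $\alpha \cdot [E] \in \Eff_{k-1}(X)$. Since $H$ is ample, $\pi^{*}H$ is nef, and $\pi^{*}H - E$ is ample by assumption, so $Q$ is a nonnegative integer combination of $(e-1)$-fold products of nef divisor classes. I would then conclude by invoking the standard fact that the intersection of a pseudo-effective cycle class with a nef Cartier divisor class is again pseudo-effective, and iterating this $e-1$ times across the factors making up $Q$.

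If the nef-meets-pseudo-effective fact requires justification in this cycle-theoretic setting, I would reduce by continuity and linearity to the case of a single nef divisor $N$ intersecting the class $[Z]$ of an irreducible subvariety. Writing $N = \lim_{t \to 0^{+}}(N + tH')$ for $H'$ ample, the restriction $(N+tH')|_{Z}$ is ample and hence represented by an effective cycle on $Z$; pushing this forward to $X$ and letting $t \to 0$ exhibits $N \cdot [Z]$ as a limit of effective cycle classes, which is exactly pseudo-effectivity.

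The main obstacle, to the extent there is one, is really just the basic nef-times-pseudo-effective preservation statement; the factorization itself is a one-line identity. Note that the generically finite hypothesis on $\pi$ is not used directly in the argument --- it only ensures that the hypothesis ``$\pi^{*}H - E$ ample'' can ever be realized, since $\pi^{*}H$ must at least be big. Likewise, $\pi$-antiampleness of $E$ is not invoked in the proof of this lemma itself; it is part of the ambient context coming from Lemma \ref{exceptionalintersectsnonpsef}.
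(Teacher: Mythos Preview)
Your argument is correct and is essentially identical to the paper's proof: both use the factorization $\pi^{*}H^{e}-(\pi^{*}H-E)^{e}=[E]\cdot\sum_{i}\pi^{*}H^{i-1}(\pi^{*}H-E)^{e-i}$ and then observe that the sum is a nonnegative combination of intersections of nef divisors, so that multiplying the pseudo-effective class $\alpha\cdot[E]$ by it stays pseudo-effective. Your added justification of the nef-times-pseudo-effective step and your remarks on the unused hypotheses are accurate and go slightly beyond what the paper spells out.
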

\begin{proof}Observe that $$\pi^*H^e-(\pi^*H-E)^e=E\cdot \left( \sum_{i=1}^e  \pi^*H^{i-1}(\pi^*H-E)^{e-i} \right),$$ and $\sum_{i=1}^e \pi^*H^{i-1}(\pi^*H-E)^{e-i}$ is a positive combination of complete intersections of nef divisors.\end{proof}

\begin{thrm} \label{exceptionalclassesthrm}
Let $\pi: X \to Y$ be a morphism of projective varieties of relative dimension $l$ and let $\alpha$ be a $\pi$-exceptional class.
\begin{enumerate}
\item The only way to write $\alpha = P + N$ for a movable class $P$ and a pseudo-effective class $N$ is if $P=0$ and $\alpha = N$.
\item $\alpha$ is the pushforward of a pseudo-effective class on a proper subvariety of $X$.  
\end{enumerate}
In fact, there is a proper closed subset $W \subset X$ such that \emph{every} $\pi$-exceptional class is the pushforward of a pseudo-effective class on $W$.
\end{thrm}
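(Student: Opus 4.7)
The plan is to deduce both statements from Lemma \ref{exceptionalintersectsnonpsef}, which provides a flattening $f_X : X' \to X$ together with an effective Cartier divisor $E$ on $X'$; for (2) this is combined with Proposition \ref{prop:pushforwardfrome}.

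For (1), write $\alpha = P + N$ with $P \in \Mov_k(X)$ and $N \in \Eff_k(X)$. First observe that $P$ must itself be $\pi$-exceptional: from $(P \cdot \pi^*H^{k-e}) + (N \cdot \pi^*H^{k-e}) = 0$, both summands are psef (as intersections of psef classes with powers of the nef divisor $\pi^*H$, each representable by moving general members of $|mH|$ to avoid given subvarieties), so salience of $\Eff_e(X)$ forces both to vanish. Next, use \cite[Cor.~3.12]{fl14} to lift $P$ to $P' \in \Mov_k(X')$. If $P' \neq 0$, Lemma \ref{exceptionalintersectsnonpsef} forces $E \cdot P'$ to fail to be psef. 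However, $P'$ is a limit of classes of cycles from strongly movable families whose general members avoid $|E|$ (since these families cover $X'$), so they intersect $E$ in effective cycles; passing to the limit, $E \cdot P'$ is psef, a contradiction. Hence $P' = 0$ and $P = f_{X*}P' = 0$.

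For (2), set $W := f_X(|E|)$. Because $|E|$ has codimension one in $X'$ and $f_X$ is birational, $\dim W \leq \dim X - 1$, so $W$ is a proper closed subset of $X$; it will serve uniformly for all $\pi$-exceptional classes. The $\pi$-exceptional classes form a face $F$ of $\Eff_k(X)$ by the same salience argument as in (1), and the preimage $F' := \{\alpha' \in \Eff_k(X') \mid f_{X*}\alpha' \in F\}$ is a face of $\Eff_k(X')$. Each nonzero extremal ray of $F'$ is extremal in $\Eff_k(X')$ and lies in $F'$, so Lemma \ref{exceptionalintersectsnonpsef} forces its intersection with $E$ to fail to be psef; Proposition \ref{prop:pushforwardfrome} then expresses each such extremal ray as the pushforward of a psef class from some irreducible component of $|E|$.

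Lift an arbitrary $\pi$-exceptional $\alpha$ to some $\alpha' \in F'$ via \cite[Cor.~3.22]{fl13}. Because $F'$ is a closed pointed convex cone in a finite-dimensional vector space, Minkowski--Carath\'eodory writes $\alpha'$ as a finite nonnegative combination of its extremal rays. Grouping by the finitely many components of $|E|$ gives $\alpha' = \imath_{|E|*}\gamma$ for some $\gamma \in \Eff_k(|E|)$, and pushing down yields $\alpha = f_{X*}\alpha' = \imath_{W*}((f_X|_{|E|})_*\gamma)$, a pushforward of a psef class from $W$. The main obstacle is the transfer from $X'$ back to $X$: Lemma \ref{exceptionalintersectsnonpsef} naturally operates on the flattening, so in (1) we need movability to survive lifting and to yield psef intersections with $E$, while in (2) we need the face $F'$ to admit a Minkowski--Carath\'eodory decomposition so that Proposition \ref{prop:pushforwardfrome} localizes $\alpha'$ on $|E|$, which then descends to a common $W \subsetneq X$ because $f_X$ is birational.
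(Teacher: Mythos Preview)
Your proof is correct and follows essentially the same route as the paper: both parts pass to the flattening $X'$ of Lemma \ref{exceptionalintersectsnonpsef}, use that $E\cdot\alpha'$ fails to be pseudo-effective, and for (2) invoke Proposition \ref{prop:pushforwardfrome} on extremal rays to localize on $|E|$, with $W=f_X(|E|)$. Two small remarks: in (1) your phrase ``avoid $|E|$'' should be ``have no component contained in $|E|$'' (that is what makes $E\cdot[\text{cycle}]$ effective; general members of a strongly movable family need not miss $E$), and the paper reaches the same contradiction by citing Theorem \ref{nullmovpush} rather than arguing directly; in (2) the paper reduces to extremal $\alpha$ on $X$ and then lifts to an extremal preimage, whereas you lift first and apply Carath\'eodory on the face $F'\subset\Eff_k(X')$ --- these are interchangeable bookkeeping choices.
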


\begin{proof}
(1) Since $\alpha$ is $\pi$-exceptional, so are $P$ and $N$.  Choose a flattening $\pi': X' \to Y'$ of $\pi$ and let $\alpha' \in \Mov_{k}(X')$ be a preimage of $P$.  Lemma \ref{exceptionalintersectsnonpsef} implies that there is an effective Cartier divisor $E$ on $X'$ such that $\alpha' \cdot E$ is not pseudo-effective if $\alpha'\neq 0$.  This is impossible by Theorem \ref{nullmovpush}, showing that $P = 0$ as well.

(2) It suffices to consider the case when $\alpha$ lies on an extremal ray.  Choose a flattening $\pi': X' \to Y'$ of $\pi$.  Then there is some $\alpha'$ lying on an extremal ray of $\Eff_{k}(X')$ that is a preimage of $\alpha$ and an effective Cartier divisor $E$ satisfying $E \cdot \alpha'$ is not pseudo-effective. Proposition \ref{prop:pushforwardfrome} shows that $\alpha'$ is the pushforward of a pseudo-effective class from a subvariety $E_{1}$ of $X'$.  Then $\alpha$ is the pushforward of a pseudo-effective class from the image of $E_{1}$ in $X$.

To see the final statement, note that by Lemma \ref{exceptionalintersectsnonpsef} we  can choose $E_{1}$ independently of $\alpha$ in (2); set $W = f_{X}(|E_{1}|)$. 
\end{proof}

\subsection{Contractibility index} \label{sec:contractibilityindex}

It turns out to be useful to quantify ``how close'' a $\pi$-contracted class is to being $\pi$-exceptional.

\begin{defn}
Let $\pi: X \to Y$ be a dominant morphism of projective varieties.  Suppose $H$ is an ample divisor on $Y$.  For a class $\alpha \in \Eff_{k}(X)$, the contractibility index of $\alpha$ is the largest non-negative integer $c \leq k+1$ such that $\alpha \cdot \pi^{*}H^{k+1-c} = 0$.  The definition is independent of the choice of $H$.  We denote the $\pi$-contractibility index of $\alpha$ by $\contr_{\pi}(\alpha)$.


If $V \subset X$ is a subvariety, we define the contractibility index of $V$ to be the contractibility index of $[V]$.
\end{defn}

Note that when $c=0$ we have $\alpha \cdot \pi^{*}H^{k+1} = 0$ for dimension reasons, so that the $\pi$-contractibility index is well-defined.  The following properties are immediate:

\begin{itemize}
\item A pseudo-effective class has $\contr_{\pi}(\alpha) > 0$ precisely when $\pi_{*}\alpha = 0$.
\item The contractibility index is (by definition) at most $k+1$, and $0 \in \Eff_{k}(X)$ is the only pseudo-effective class achieving this maximal value.
\item The contractibility index of $\alpha \in \Eff_{k}(X)$ is at least $k - \dim Y$.
\item The contractibility index of $[X] \in N_{\dim X}(X)$ is the relative dimension of $\pi$.  More generally, if $\alpha$ is the class of an irreducible cycle $Z$, then $\contr_{\pi}(\alpha) = \mathrm{reldim}(\pi|_{Z})$.
\item A pseudo-effective class $\alpha$ is $\pi$-exceptional precisely when its contractibility index is greater than $\mathrm{reldim}(\pi)$.
\end{itemize}


The following theorem was inspired by a question of Dawei Chen.

\begin{thrm} \label{contindexextremality}
Let $\pi: X \to Y$ be a dominant morphism of projective varieties.  Fix a positive integer $m$.  Let $k=k(m)$ be the largest integer such that there is a subvariety of dimension $k$ of contractibility index $\geq m$.  Then
\begin{enumerate}
\item There are only finitely many subvarieties $V_{1},\ldots,V_{s}$ of dimension $k$ and contractibility index $\geq m$.
\item Any $\alpha \in \Eff_{k}(X)$ of contractibility index $\geq m$ is a non-negative linear combination of the $[V_{i}]$.
\end{enumerate}
\end{thrm}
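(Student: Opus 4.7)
My plan is to prove (1) through a dimension analysis of the ``exceptional loci'' $Y_{\geq m'} := \{y \in Y : \dim \pi^{-1}(y) \geq m'\}$, and then to deduce (2) via an extremal-ray argument in the closed convex face
\[
F \;:=\; \Eff_k(X) \cap \{\alpha : \alpha \cdot \pi^*H^{k+1-m} = 0\} \;\subseteq\; \Eff_k(X).
\]
Write $\mathcal{C}$ for the set of irreducible subvarieties $V \subseteq X$ of dimension $k$ with $\contr_\pi([V]) \geq m$, so that (1) becomes the assertion that $\mathcal{C}$ is finite.

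The main step for (1) is the dimension bound $\dim Y_{\geq m'} \leq k(m') - m'$ whenever $Y_{\geq m'}$ is nonempty. The set $Y_{\geq m'}$ is closed by upper semi-continuity of fiber dimension. If some irreducible component $C \subseteq Y_{\geq m'}$ had dimension $> k(m') - m'$, then picking an irreducible component $F_\eta$ of the generic fiber $\pi^{-1}(\eta)$ over the generic point $\eta$ of $C$ with $\dim F_\eta \geq m'$, and taking its closure $Z \subseteq X$, would give an irreducible subvariety with $\pi(Z) = C$, $\dim Z = \dim C + \dim F_\eta > k(m')$, and contractibility $\dim Z - \dim C \geq m'$, contradicting the maximality defining $k(m')$. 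Since $k(\cdot)$ is non-increasing, in particular $\dim Y_{\geq m'} \leq k - m'$ for all $m' \geq m$. Now for $V \in \mathcal{C}$ with $W := \pi(V)$ of dimension $d \leq k-m$, the general fiber of $V \to W$ has dimension $k-d \geq m$, so $W \subseteq Y_{\geq k-d}$, and the bound forces $W$ to be an irreducible component of $Y_{\geq k-d}$ of the maximal dimension $d$. Ranging $d$ over $\{0, 1, \ldots, k-m\}$ and noting that each $Y_{\geq k-d}$ has finitely many irreducible components yields finitely many possible $W$; for each such $W$, the closed subset $\pi^{-1}(W) \subseteq X$ has finitely many $k$-dimensional irreducible components, proving (1).

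For (2), any extremal ray of $F$ is also extremal in $\Eff_k(X)$: given a decomposition $\alpha = \beta + \gamma$ with $\beta, \gamma \in \Eff_k(X)$, pairing against the nef class $\pi^*H^{k+1-m} \cdot A^{m-1}$ (for any ample $A$ on $X$) forces $\beta, \gamma \in F$. By Lemma~\ref{irraprox}, such an extremal ray has the form $\alpha = \lim_j r_j [S_j]$ with $r_j > 0$ and $S_j \subseteq X$ an irreducible subvariety of dimension $k$. For any $S_j$ with $\contr_\pi [S_j] < m$, the class $[S_j] \cdot \pi^*H^{k+1-m}$ is represented by a non-zero effective $(m-1)$-cycle, so $[S_j] \cdot \pi^*H^{k+1-m} \cdot A^{m-1}$ is a positive integer; combined with $r_j [S_j] \cdot \pi^*H^{k+1-m} \cdot A^{m-1} \to 0$, the corresponding $r_j$ must tend to $0$. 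A normalization/compactness argument then shows that we may pass to a subsequence in which all $S_j$ lie in $\mathcal{C}$; by pigeonhole and the finiteness of $\mathcal{C}$, $S_j$ is eventually equal to some $V_{i_0}$, so $\alpha \in \mathbb{R}_{\geq 0}[V_{i_0}]$.

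The step I expect to be the main obstacle is the compactness argument in part (2): the approximating classes $[S_j]$ could, a priori, have unbounded degree and all lie outside $\mathcal{C}$, while the rescaled limit nonetheless lands in $F$. To close this, I would normalize to $A^k \cdot [S_j] = 1$ and examine the limiting direction, which again lies in $F$; I expect to force it into the subcone $\sum_i \mathbb{R}_{\geq 0}[V_i]$ either by a ``negativity'' property of the Gram matrix among the $[V_i]$ (in the spirit of the classical negativity lemma for exceptional divisors) or by invoking the Zariski-decomposition framework of \cite{fl15} to show that any movable contribution to the limit must vanish inside $F$.
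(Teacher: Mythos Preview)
Your argument for (1) is correct and gives a pleasant direct proof, different from the paper's approach. The paper proceeds by induction on the codimension $n-k$: once $n-k>0$ one has $m>{\rm reldim}(\pi)$, so every class of contractibility $\geq m$ is $\pi$-exceptional; Theorem~\ref{exceptionalclassesthrm} then produces a fixed proper closed subscheme $W\subset X$ from which every such class is pushed forward, and one applies the induction hypothesis to the components of $W$. Your stratification by the loci $Y_{\geq m'}$ avoids this machinery and yields both the finiteness and an explicit description of the $V_i$ as $k$-dimensional components of $\pi^{-1}(W)$ for $W$ a top-dimensional component of some $Y_{\geq m'}$.

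Your argument for (2), however, has a genuine gap, and you have correctly located it. If every approximating subvariety $S_j$ lies outside $\mathcal{C}$, your integrality bound gives $r_j\to 0$, but normalizing by $A^k\cdot[S_j]$ only recovers the direction of $\alpha$ itself: since $r_j(A^k\cdot[S_j])\to A^k\cdot\alpha$ and $r_j[S_j]\to\alpha$, the normalized limit is $\alpha/(A^k\cdot\alpha)$, and nothing new is learned. Neither proposed fix is available in this generality: there is no negative-definiteness statement for intersection matrices of higher-codimension exceptional loci, and invoking the Zariski-decomposition framework to kill movable contributions in $F$ amounts to using Theorem~\ref{exceptionalclassesthrm} anyway. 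The missing idea is precisely Lemma~\ref{exceptionalintersectsnonpsef} together with Proposition~\ref{prop:pushforwardfrome}: after passing to a flattening there is an effective Cartier divisor $E$ with $E\cdot\alpha'$ not pseudo-effective for any nonzero lift $\alpha'$, which forces infinitely many of the $S_j$ into a single component of $E$, hence pushes $\alpha$ forward from a fixed proper subscheme. This is what lets the paper run the induction and close (2).
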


\begin{proof}
Set $n=\dim X$.  The proof is by induction on the codimension $n-k$.  The base case $n-k=0$ is obvious.  

Now suppose $n-k>0$.  In particular $m$ is greater than the contractibility index of $X$, so that any class with contractibility index $\geq m$ is $\pi$-exceptional.  Theorem \ref{exceptionalclassesthrm} guarantees that there is a proper subscheme $i: W \hookrightarrow X$ such that every $\pi$-exceptional class is pushed forward from $W$.   Let $\{ W_{i} \}$ be the irreducible components of $W$.  Note that for any pseudo-effective class $\beta$ on a component $W_{i}$, the contractibility index for $\pi|_{W_{i}}$ is the same as the contractibility index of the pushforward of $\beta$ to $X$.

In particular, for each $W_{i}$ any subvariety with contractibility index $\geq m$ has dimension no more than $k$.  Applying the induction assumption to each $W_{i}$ in turn, we immediately obtain (1) for $X$.  Suppose now that $\alpha \in \Eff_{k}(X)$ has contractibility index $\geq m$.  Since $\alpha$ is $\pi$-exceptional, it is pushed forward from $W$, and hence is a sum of pushforwards of pseudo-effective classes of contractibility index $\geq m$ from the $W_{i}$.  Applying (2) to each $W_{i}$, we obtain (2) for $X$ by pushing forward.
\end{proof}

If $\pi:X\to Y$ is birational and $Z\subset X$ is the exceptional locus of $\pi$, then there are no nonzero effective $\pi$-exceptional (equivalently $\pi$-contracted by Example \ref{exmple:exceptionalforbirational}) classes of dimension bigger than $\dim Z$. As a consequence of the theorem above, the same is true for pseudo-effective classes. In particular the SC holds in this case.

\begin{cor}Let $\pi:X\to Y$ be a birational morphism and let $Z={\rm Exc}(\pi)\subset X$. If $d=\dim Z$, then $\Eff_k(\pi)=0$ for all $k>d$ and $\Eff_d(\pi)$ is polyhedral, generated by the components of $Z$ that are contracted by $\pi$. 
\end{cor}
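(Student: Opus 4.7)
The plan is geometric: identify $\pi$-contracted subvarieties as lying in $Z$, and then use finiteness of top-dimensional components.

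First I would verify the key containment: every irreducible subvariety $V \subset X$ with $\dim \pi(V) < \dim V$ is contained in $Z$. If some point $v \in V \setminus Z$ existed, then $\pi$ would be a local isomorphism at $v$, so $\pi|_V$ would be an isomorphism from a neighborhood of $v$ in $V$ onto a neighborhood of $\pi(v)$ in $\pi(V)$, forcing $\dim V = \dim \pi(V)$ and contradicting contractedness. Consequently every effective $\pi$-contracted $k$-cycle is supported in $Z$, which is impossible once $k > d$.

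For $k > d$, the cone of effective $\pi$-contracted $k$-classes is therefore trivial, so its closure $\Eff_k(\pi)$ is zero. For $k = d$, the $d$-dimensional $\pi$-contracted subvarieties of $X$ are exactly the $d$-dimensional irreducible components of $Z$ that happen to be contracted by $\pi$; call them $V_1, \ldots, V_s$. Every effective $\pi$-contracted $d$-cycle is a non-negative integer combination of the classes $[V_1], \ldots, [V_s]$, so the convex cone they generate in $N_d(X)$ is finitely generated. A finitely generated convex cone in a finite-dimensional real vector space is automatically closed, so this cone already equals its closure $\Eff_d(\pi)$ and is polyhedral with the claimed generators.

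I expect no serious obstacle: the only conceptual step is the geometric containment in $\mathrm{Exc}(\pi)$, and the rest is just bookkeeping with finiteness of irreducible components. The paragraph preceding the corollary notes that Theorem \ref{contindexextremality} (applied with $m=1$) additionally transfers this description to the pseudo-effective side $\Eff_k(X) \cap \ker\pi_*$, thereby establishing the Strong Conjecture for the birational morphism $\pi$.
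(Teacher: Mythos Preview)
Your proof is correct and matches the paper's own reasoning. The paragraph immediately preceding the corollary in the paper gives exactly your argument for the effective side (contracted subvarieties lie in $Z$, hence none exist in dimension $>d$ and only the finitely many $d$-dimensional contracted components of $Z$ occur in dimension $d$), and your observation that a finitely generated cone is closed is what identifies this with $\Eff_d(\pi)$. You also correctly note that the paper's placement of this as a corollary of Theorem~\ref{contindexextremality} is because that theorem upgrades the same description to $\Eff_k(X)\cap\ker\pi_*$, yielding the Strong Conjecture in these degrees; but for the statement as written, about $\Eff_k(\pi)$, your direct argument is all that is needed.
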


\begin{exmple} \label{ccexample}
\cite{cc15} and \cite{luca15} identify cycle classes which lie on extremal rays of the \emph{effective} cone of various moduli spaces of curves.  
Some of these results can be extended to the \emph{pseudo}-effective setting.   We will prove one of these extensions as a demonstration of our techniques.

\cite[Proposition 2.5]{cc15} considers a generically finite morphism $\pi: X \to Y$ of projective varieties.  Suppose that $Z\subset X$ is a subvariety which contains the $\pi$-exceptional locus and the pushforward $N_{k}(Z) \to N_{k}(X)$ is injective.  Then (under some additional hypotheses) \cite{cc15} shows that any effective class $\alpha \in N_{k}(X)$ which is $\pi$-exceptional, and the pushforward of a class lying on an extremal ray of ${\rm Eff}_k(Z)$, is also extremal in $\Eff_{k}(X)$.

The analogous statement for pseudo-effectivity is also true, even without the additional hypotheses.  Suppose that $\beta_{i}$ are pseudo-effective classes on $X$ satisfying $\sum \beta_{i} = \alpha$.  Then each $\beta_{i}$ is also $\pi$-exceptional; arguing as in Lemma \ref{exceptionalintersectsnonpsef}, we see that any such $\beta_{i}$ is the pushforward of a pseudo-effective class on $N_{k}(Z)$.  Using the injectivity of the pushforward, we deduce that $\sum \beta_{i} = \alpha$ as classes on $N_{k}(Z)$ -- by extremality, each $\beta_{i}$ must be proportional to $\alpha$ in $N_{k}(Z)$, hence also in $N_{k}(X)$.)

Using these strengthened versions, one can extend many of the results of \cite{cc15} to the pseudo-effective cone.  For example, \cite[Theorem 7.2]{cc15} shows that the effective cone of codimension $2$ cycles on $\overline{M}_{1,n}$ is not finite polyhedral.  The analogous statement for $\Eff_{n-1}(\overline{M}_{1,n})$ is also true.  Indeed, one just needs to replace \cite[Proposition 2.5]{cc15} by the corresponding pseudo-effective extension above.  The same argument in the context of \cite[Theorem 8.2]{cc15} shows that the pseudo-effective cone of codimension $2$ cycles on $\overline{M}_{2,n}$ is not finite polyhedral.

\end{exmple}

\subsection{Further reduction steps}

Theorem \ref{exceptionalclassesthrm} allows us to make some further reductions to the pushforward conjectures beyond \cite{djv13}.  Note that it is crucial to allow singularities to make these further reductions.

\begin{prop}[Reducing dimension in the relative dimension zero case]\label{reddimbir} Let $\pi:X\to Y$ be a generically finite dominant morphism of projective varieties of dimension $n$. Let $E_i$ be the components of an effective $\pi$-antiample Cartier divisor $E$ of class $e$ on $X$. If the PC are true for $\pi|_{E_i}$, then they are also true for $\pi$. 
\end{prop}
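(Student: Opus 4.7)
The plan is to combine the cone-pushforward machinery of Section~\ref{sec:excclasses} with the inductive hypothesis. Since $\pi$ is generically finite and $\Eff_k(X)\cap\ker\pi_*$ is a closed convex salient cone, I first reduce to showing the PC for classes $\alpha$ spanning extremal rays of this cone; both $N_k(\pi)$ and $\Eff_k(\pi)$ are closed convex, so this suffices. Fix such an $\alpha\neq 0$, and note that by Example~\ref{exmple:exceptionalforbirational} it is $\pi$-exceptional. I may assume $k<n$, since for $k=n$ the map $\pi_*$ is injective on $\Eff_n(X)=\mathbb R_{\geq0}[X]$.

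The key technical step is to check that $\alpha\cdot E$ is not pseudo-effective, which will allow Proposition~\ref{prop:pushforwardfrome} to extract a component of $E$ that carries $\alpha$. Choose an ample $H$ on $Y$ such that $\pi^*H-E$ is ample on $X$; this is possible because $-E$ is $\pi$-ample. Suppose for contradiction that $\alpha\cdot E\in\Eff_{k-1}(X)$. Then Lemma~\ref{dercycle} applied with exponent $k$ gives
\[
  \alpha\cdot\pi^*H^k\succeq \alpha\cdot(\pi^*H-E)^k.
\]
The left hand side is $0$ because $\pi_*\alpha=0$ and $\pi$ is generically finite (Proposition~\ref{numchar}), while the right hand side is a non-negative real number because $\pi^*H-E$ is ample and $\alpha$ is pseudo-effective. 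Hence $\alpha\cdot(\pi^*H-E)^k=0$, and applying Proposition~\ref{numchar} to the identity morphism forces $\alpha=0$, a contradiction.

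With $\alpha\cdot E$ non-pseudo-effective in hand, Proposition~\ref{prop:pushforwardfrome} supplies a component $E_i$ of $E$ and a class $\beta\in\Eff_k(E_i)$ such that $\alpha=\imath_{i*}\beta$, where $\imath_i\colon E_i\hookrightarrow X$ is the inclusion. Since $(\pi|_{E_i})_*\beta=\pi_*\imath_{i*}\beta=\pi_*\alpha=0$, the assumed PC for $\pi|_{E_i}$ places $\beta$ in $\Eff_k(\pi|_{E_i})$ (Strong version) or in $N_k(\pi|_{E_i})$ (Weak version). Pushing forward by $\imath_i$ then realises $\alpha$ as an element of $\Eff_k(\pi)$ respectively $N_k(\pi)$, because subvarieties of $E_i$ contracted by $\pi|_{E_i}$ remain $k$-dimensional subvarieties of $X$ contracted by $\pi$. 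The main obstacle, and the only place where the $\pi$-antiampleness of $E$ is used crucially, is the intersection-theoretic step producing a component of $E$ that supports $\alpha$; once that is available, the reduction to the PC on each $E_i$ is essentially formal.
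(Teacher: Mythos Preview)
Your proof is correct and follows essentially the same approach as the paper's, which simply invokes Lemma~\ref{exceptionalintersectsnonpsef} (in its generically finite case with $X'=X$) rather than re-deriving the non-pseudo-effectivity of $\alpha\cdot E$ via Lemma~\ref{dercycle} as you do. One small point worth making explicit: to apply Proposition~\ref{prop:pushforwardfrome} you need $\alpha$ to be extremal in $\Eff_k(X)$, not merely in $\Eff_k(X)\cap\ker\pi_*$; this holds because the latter is an extremal face of the former (being cut out by the nef class $\pi^*H^k$, via Proposition~\ref{numchar}), so its extremal rays are also extremal in $\Eff_k(X)$.
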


\begin{proof} Let $\alpha\in\Eff_k(X)$ with $\pi_*\alpha=0$. By Example \ref{exmple:exceptionalforbirational}, $\alpha$ is $\pi$-exceptional. We may assume that $\alpha$ is also extremal. The last statement in Lemma \ref{exceptionalintersectsnonpsef} shows that $\alpha$ is the pushforward of a pseudo-effective class from a component of $E$, whence the result.
\end{proof}

In particular, to deduce the SC/WC by an inductive argument, we may always precompose by generically finite maps.

\begin{prop}[Reduction to the flat case]The general case of the PC is implied by
the flat case.\end{prop}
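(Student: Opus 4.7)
The plan is to use Raynaud--Gruson flattening to dominate $\pi:X\to Y$ by a flat model $\pi':X'\to Y'$ and then transfer the Pushforward Conjectures (PC) across the resulting birational projections $f_X:X'\to X$ and $f_Y:Y'\to Y$, proceeding by induction on $\dim X$. By the reductions from \cite{djv13} recalled earlier in this section, we may assume $\pi$ is surjective with connected fibers and that $X$ is smooth. The first observation is that $f_{X*}$ carries $\Eff_k(\pi')$ into $\Eff_k(\pi)$: if $Z\subset X$ is a $\pi$-contracted $k$-subvariety, then its strict transform $\tilde Z\subset X'$ is $\pi'$-contracted and satisfies $f_{X*}[\tilde Z]=[Z]$, so by passing to limits the induced map $f_{X*}:\Eff_k(\pi')\to \Eff_k(\pi)$ is surjective.

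Given $\alpha\in\Eff_k(X)\cap \ker \pi_*$, I will seek a lift $\alpha'\in\Eff_k(X')$ of $\alpha$ satisfying the stronger condition $\pi'_*\alpha'=0$. Granted such a lift, the assumed flat PC for $\pi'$ gives $\alpha'\in \Eff_k(\pi')$, and pushing forward via $f_X$ yields $\alpha\in\Eff_k(\pi)$. An arbitrary initial lift $\alpha'_0\in\Eff_k(X')$ of $\alpha$, which exists by \cite[Corollary 3.22]{fl13}, satisfies only $f_{Y*}\pi'_*\alpha'_0=\pi_*\alpha=0$, so $\pi'_*\alpha'_0$ is merely $f_Y$-exceptional by Example \ref{exmple:exceptionalforbirational}. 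Theorem \ref{exceptionalclassesthrm} then provides a fixed proper subscheme $j_W:W\hookrightarrow Y'$ and a class $\gamma_0\in \Eff_k(W)$ with $\pi'_*\alpha'_0 = j_{W*}\gamma_0$. Since $\pi'$ is flat, the restriction $\pi'|_V:V\to W$ to $V:=(\pi')^{-1}(W)$ is also flat, and \cite[Corollary 3.22]{fl13} lets us lift $\gamma_0$ to some $\gamma\in\Eff_k(V)$, giving an auxiliary class $j_{V*}\gamma\in\Eff_k(X')$ with $\pi'_*j_{V*}\gamma=\pi'_*\alpha'_0$.

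The main obstacle I expect is that the correction $\alpha'_0-j_{V*}\gamma$ satisfies $\pi'_*(\alpha'_0-j_{V*}\gamma)=0$ but need not be pseudo-effective, so the formal subtraction does not produce a legal lift. I plan to circumvent this by interleaving the correction with the inductive hypothesis: $j_{V*}\gamma$ is supported on the proper subscheme $V\subsetneq X'$, and its image $f_X(V)\subsetneq X$ has strictly smaller dimension than $X$, so the inductive PC in lower dimensions (together with Proposition \ref{thrm:birationalnandn-1}, which reduces the birational case in dimension $n$ to PC in dimension $\leq n-1$) applies to the morphism $\pi|_{f_X(V)}$ and lets us represent the contribution of $j_{V*}\gamma$ by classes drawn from $\Eff_k(\pi)$. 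Combining these inductively supplied pieces with the part of $\alpha'_0$ killed by $\pi'_*$, and then pushing forward under $f_X$, will yield the desired $\alpha\in \Eff_k(\pi)$. The same argument with subspaces in place of cones handles the Weak Conjecture.
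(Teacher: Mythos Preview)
Your plan has a genuine gap at the point you yourself flag. You correctly note that $\alpha'_0 - j_{V*}\gamma$ lies in $\ker\pi'_*$ but need not be pseudo-effective, so the flat PC cannot be applied to it. Your proposed workaround does not close this gap: showing that $f_{X*}j_{V*}\gamma\in\Eff_k(\pi)$ by induction is fine, but this says nothing about $\alpha$ itself. What you would need is that $\alpha - f_{X*}j_{V*}\gamma = f_{X*}(\alpha'_0 - j_{V*}\gamma)$ also lies in $\Eff_k(\pi)$, and the only tool available for that is the flat PC applied to $\alpha'_0 - j_{V*}\gamma$---which requires exactly the pseudo-effectivity you do not have. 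The phrase ``combining these inductively supplied pieces with the part of $\alpha'_0$ killed by $\pi'_*$'' is not an argument; there is no such decomposition of $\alpha'_0$ into pseudo-effective summands, one in $\ker\pi'_*$ and one supported on $V$.

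The paper avoids this subtraction entirely by a dichotomy. Take $\alpha'$ an \emph{extremal} pseudo-effective lift of $\alpha$ and let $E=\pi'^*F$ for an effective $f_Y$-antiample divisor $F$ on $Y'$. Either $\alpha'\cdot E$ is pseudo-effective, in which case Lemma~\ref{dercycle} and Proposition~\ref{numchar} force $\pi'_*\alpha'=0$ outright (no correction term needed) and the flat PC applies; or $\alpha'\cdot E$ is not pseudo-effective, in which case Proposition~\ref{prop:pushforwardfrome} pushes $\alpha'$ from a component of $E$ and induction finishes. Extremality is what makes one of the two alternatives hold for the \emph{same} class, replacing your attempted subtraction by a clean case split. (A minor side note: your claim that the strict transform of a $\pi$-contracted subvariety is $\pi'$-contracted is false in general---$f_Y$ can enlarge the image---but only the inclusion $f_{X*}\Eff_k(\pi')\subset\Eff_k(\pi)$ is needed, and that is immediate from $\pi_*f_{X*}=f_{Y*}\pi'_*$.)
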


\begin{proof}Let $\pi:X\to Y$ be a morphism of projective varieties, and let $\alpha\in\Eff_k(X)\cap\ker\pi_*$ which we can assume to be extremal. 
Let $\pi':X'\to Y'$
be a flattening of $\pi$ with birational morphisms $f_X:X'\to X$ and $f_Y:Y'\to Y$. Let $\alpha'$ be an extremal pseudo-effective preimage of $\alpha$ on $X'$. It is enough to show that the PC hold for the pair $(\pi\circ f_X,\alpha')$. Let $F$ be an effective $f_Y$-anti-ample Cartier divisor on $Y'$, and let $H$ be an ample class
on $Y$ such that $f_Y^*H-F$ is ample on $Y'$. Put $E=\pi'^*F$.

If $\alpha'\cdot E\in\Eff_{k-1}(X')$, then by the projection formula, $\pi'_*\alpha'\cdot F\in\Eff_{k-1}(Y')$. Proposition \ref{numchar} and Lemma \ref{dercycle} then show that $\pi'_*\alpha'=0$. If the PC hold for the pair $(\pi',\alpha')$, then they also hold for $(\pi,\alpha)$. 

If $\alpha'\cdot E$ is not pseudo-effective, then Proposition \ref{prop:pushforwardfrome} shows that $\alpha'$ is pushed forward from one of the irreducible components of the support of $E$. Conclude by induction on $\dim X$. 
\end{proof}

Finally we show that the Strong or Weak Conjecture follows from the birational case. The argument which works by increasing dimension is based on a relative version of a classical cone construction. 

\begin{lem}\label{relcone}Let $\pi:X\to Y$ be a projective morphism. Let $H$ be a sufficiently
$\pi$-ample divisor on $X$, and let \begin{equation}\label{eq:conevsstein}\mathcal R(H):=\mathcal O_Y\oplus
\bigoplus_{m\geq 1}\pi_*\mathcal O_X(mH).\end{equation} Put $T={\rm Spec}_{\mathcal O_Y}\mathcal R(H)$ with induced affine morphism $\rho:T\to Y$. The natural
map $$\mathcal R(H)\twoheadrightarrow\mathcal O_Y$$ induces a section $i:Y\hookrightarrow T$ of $\rho$. Then $Z:={\rm Bl}_{i(Y)}T$ admits a morphism 
$f:Z\to X$ that is isomorphic to the bundle map for the geometric line bundle
$Z':={\rm Spec}_{\mathcal O_X}\bigoplus_{m\geq 0}\mathcal O_X(mH)$.
Moreover if $E$ denotes the exceptional divisor of the blow-up, then $f|_E:E\to X$ is an isomorphism whose inverse is the zero section of $f$, and the induced map $E\to Y$ is naturally isomorphic to $\pi$.
\end{lem}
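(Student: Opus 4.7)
The plan is to relativize the classical fact that blowing up the vertex of the affine cone over $(X_0, \mathcal{O}(1))$ yields the total space of $\mathcal{O}_{X_0}(-1)$, with the exceptional divisor identified with the zero section. Since $H$ is sufficiently $\pi$-ample, standard relative Proj theory gives a canonical identification $X \cong \mathrm{Proj}_Y \mathcal{R}(H)$ matching $\mathcal{O}_X(H)$ with $\mathcal{O}(1)$, and ensures that $\mathcal{R}(H)$ is generated in degree one over $\mathcal{O}_Y$. Writing $p \colon Z' \to X$ for the bundle projection, I would construct a $Y$-morphism $g \colon Z' \to T$ from the natural map of graded $\mathcal{O}_Y$-algebras
$$\mathcal{R}(H) \longrightarrow (\pi \circ p)_* \mathcal{O}_{Z'} = \bigoplus_{m \geq 0} \pi_* \mathcal{O}_X(mH),$$
which is the unit in degree zero and the identity on each positive graded piece.

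Next, I would verify that $g$ factors through the blow-up. The ideal sheaf of the zero section in $Z'$ is $\bigoplus_{m \geq 1} \mathcal{O}_X(mH)$, which as an $\mathcal{O}_{Z'}$-submodule is canonically isomorphic to $p^* \mathcal{O}_X(H)$ and hence invertible. A direct check shows that $g^{-1}(\mathcal{I}_{i(Y)}) \cdot \mathcal{O}_{Z'}$ equals this ideal, since both are generated by the image of $\pi_* \mathcal{O}_X(H) \subset \mathcal{R}(H)$. By the universal property of blow-ups, $g$ factors through a $T$-morphism $\varphi \colon Z' \to Z$.

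To upgrade $\varphi$ to an isomorphism, I would work locally on $Y$: after restricting to an affine $\Spec A \subset Y$ over which $\pi_* \mathcal{O}_X(H)$ is generated by finitely many sections $s_0, \ldots, s_n$, both $\mathrm{Bl}_{i(Y)} T$ and $Z'$ admit the same incidence description inside $T \times_Y \mathbb{P}^n_A$, cut out by the $2 \times 2$ minors $s_i y_j - s_j y_i$. This is the classical local identification $\mathrm{Bl}_0 \mathbb{A}^{n+1} \cong \mathrm{Tot}(\mathcal{O}_{\mathbb{P}^n}(-1))$. Gluing these compatible local identifications yields $Z' \cong Z$ over $T$, identifying the bundle projection $p$ with the map $f \colon Z \to X$ of the statement. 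The remaining claims then follow: under this isomorphism the exceptional divisor corresponds to the zero section of $Z'$, so $f|_E$ is the restriction of the bundle projection to the zero section — an isomorphism onto $X$ whose inverse is the zero-section embedding — and composing with $\pi$ identifies $E \to Y$ with $\pi$.

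I expect the main obstacle to be the gluing step: the coordinate identification depends on the choice of generators $s_0, \ldots, s_n$ and naturality across overlaps must be checked. A cleaner coordinate-free alternative would be to compute the Rees algebra directly — since $\mathcal{R}(H)$ is generated in degree one, one has $\mathcal{I}_{i(Y)}^d = \bigoplus_{m \geq d} \pi_* \mathcal{O}_X(mH)$, and one can intrinsically recognize $\mathrm{Proj}_T$ of the bi-graded algebra $\bigoplus_d \mathcal{I}_{i(Y)}^d$ (taken with respect to the $d$-grading) as $\Spec_X \bigoplus_{m \geq 0} \mathcal{O}_X(mH) = Z'$ by a direct bi-graded manipulation.
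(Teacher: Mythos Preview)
The paper states this lemma without proof, treating it as a relative version of the classical cone construction (as the text preceding the lemma indicates). Your outline is a correct way to supply the argument, with one caveat.

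The issue with the local computation is not gluing but rather that the $2\times 2$ minors $s_iy_j - s_jy_i$ do \emph{not} in general cut out the blow-up inside $T\times_Y\mathbb{P}^n_A$. Over the section $i(Y)$ all the $s_i$ vanish, so the minors impose no condition and the fiber of the minor locus is all of $\mathbb{P}^n_A$, whereas the exceptional fiber of the blow-up is $\mathrm{Proj}\bigl(\bigoplus_d I^d/I^{d+1}\bigr)=\mathrm{Proj}_Y\mathcal R(H)=X$. The blow-up is only the scheme-theoretic closure of the graph, which is strictly smaller unless $X=\mathbb{P}^n_A$. So the ``same incidence description'' claim fails as stated.

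Your alternative Rees-algebra route is the correct one and goes through cleanly. Since $\mathcal R(H)$ is generated in degree one, the equality $\mathcal I_{i(Y)}^d=\bigoplus_{m\ge d}\pi_*\mathcal O_X(mH)$ holds, and one checks on the standard affine opens $D_+(s)$ for $s\in\pi_*\mathcal O_X(H)$ that the degree-zero part of the localized Rees algebra coincides with $\bigoplus_{m\ge 0}\mathcal O_X(m)|_{D_+(s)}$. The identification of $E$ with the zero section then follows from $\mathrm{gr}_I\mathcal R(H)\cong\mathcal R(H)$ and $\varphi^*\mathcal O_Z(-E)=p^*\mathcal O_X(H)$, exactly as you describe. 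With this fix, your argument is complete and matches the intended classical picture that the paper invokes.
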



\begin{rmk}If one replaces $\mathcal O_Y$ with $\pi_*\mathcal O_X$ in degree 0 in the formula for $\mathcal R(H)$ in \eqref{eq:conevsstein}, then $T={\rm Spec}_{\mathcal O_Y}\mathcal R(H)$ is the cone over the Stein factorization of $\pi$.
\end{rmk}

\begin{prop} \label{prop:birationalequivalenttoSC}
If SC (or WC) holds for birational maps, then it holds in general.
More precisely, if the conjecture is valid for birational maps in dimension $n+1$,
then it is valid for all morphisms $\pi:X\to Y$ of projective varieties with $\dim X=n$.
\end{prop}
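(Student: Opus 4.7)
The plan is to use Lemma~\ref{relcone} to embed the data of $\pi:X\to Y$ into a birational morphism of projective varieties of dimension $n+1$, and then invoke the assumed conjecture in that dimension. Fix a sufficiently $\pi$-ample $H$ on $X$ and let $b:Z\to T$, $f:Z\to X$, $i:Y\hookrightarrow T$, and $E\subset Z$ be as in the lemma. Since $Z$ is the total space of a line bundle over $X$, it sits naturally as an open subset of the $\mathbb{P}^1$-bundle $\overline Z:=\mathbb{P}_X(\mathcal{O}_X\oplus\mathcal{O}_X(H))$, while $T$ sits inside the relative projective cone $\overline T:=\mathrm{Proj}_{\mathcal O_Y}(\mathcal R(H)[t])$; both are projective of dimension $n+1$. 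The blowup $b$ extends to a projective birational morphism $\overline b:\overline Z\to\overline T$ with exceptional divisor still $E\subset Z\subset\overline Z$, identifying $\overline Z$ as the blowup of $\overline T$ along $i(Y)$.

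Given $\alpha\in\Eff_k(X)$ with $\pi_*\alpha=0$, let $\sigma:X\xrightarrow{\sim}E\hookrightarrow\overline Z$ denote the closed embedding obtained by composing the inverse of $f|_E$ with the inclusion. Set $\tilde\alpha:=\sigma_*\alpha\in\Eff_k(\overline Z)$. Since $\overline b\circ\sigma=i\circ\pi$ by Lemma~\ref{relcone}, one has $\overline b_*\tilde\alpha=i_*\pi_*\alpha=0$. Applying the assumed Strong (resp.~Weak) Conjecture to the birational morphism $\overline b$ in dimension $n+1$, I can write $\tilde\alpha$ as a limit (resp.~as a finite linear combination) of classes of $k$-dimensional subvarieties $V\subset\overline Z$ that are contracted by $\overline b$.

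To recover the conclusion on $X$, I push forward via the $\mathbb{P}^1$-bundle projection $\overline f:\overline Z\to X$. Because $\overline f\circ\sigma=\mathrm{id}_X$, one has $\overline f_*\tilde\alpha=\alpha$, so $\alpha$ is the corresponding limit (resp.~combination) of classes $\overline f_*[V]$. For each $\overline b$-contracted $V$, either $\overline f_*[V]=0$ or $\overline f(V)\subset X$ has dimension $k$; in the latter case, using the identity $\overline\rho\circ\overline b=\pi\circ\overline f$ (where $\overline\rho:\overline T\to Y$ is the structure morphism), one gets $\dim\pi(\overline f(V))=\dim\overline\rho(\overline b(V))\leq\dim\overline b(V)<k$, so that $\overline f(V)$ is a $\pi$-contracted $k$-dimensional subvariety of $X$. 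This yields SC (resp.~WC) for the pair $(\pi,\alpha)$.

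The main technical obstacle I anticipate is the compactification step: one must verify that $\overline b$ is a genuine birational morphism between the chosen projective compactifications, that its exceptional locus remains contained in the original $E$ (so that no new contractions are introduced at infinity), and that the identity $\overline\rho\circ\overline b=\pi\circ\overline f$ extends from $Z$ to all of $\overline Z$. Once these structural points are in hand, the remainder is a routine exercise combining functoriality of pushforward with the elementary fact that dimension can only decrease under images of morphisms.
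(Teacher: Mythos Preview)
Your proposal is correct and follows essentially the same strategy as the paper: both arguments compactify the relative line bundle of Lemma~\ref{relcone} to a $\mathbb{P}^1$-bundle $\overline Z$ over $X$, blow down the zero section $E$ via a birational morphism to a projective target, push $\alpha$ into $\overline Z$ via the zero section, and invoke the conjecture for the birational blowdown in dimension $n+1$. The only cosmetic difference is in the return step: the paper observes directly that the exceptional locus of the blowdown is exactly $E\simeq X$, so that $N_k(\pi)=N_k(\sigma)$ and $\Eff_k(\pi)=\Eff_k(\sigma)$ under this identification, whereas you project back via the bundle map $\overline f$ and use the commutative square $\overline\rho\circ\overline b=\pi\circ\overline f$ to see that $\overline b$-contracted varieties push forward to $\pi$-contracted ones. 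Since every $\overline b$-contracted subvariety already lies in $E$, these two viewpoints coincide. The compactification issues you flag are real but routine, and the paper itself does not spell them out in more detail than you do.
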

\begin{proof} Let $\pi:X\to Y$ be a morphism of projective varieties, with $\dim X=n$. Consider $\eta:W\to X$ a projective bundle of rank 1 over $X$ that compactifies the geometric line bundle from Lemma \ref{relcone}. Let $E\subset W$ be the zero section of $\eta$. By Lemma \ref{relcone}, the divisor $E\subset W$ can be blown-down as $\sigma:W\to S$, with $\sigma(E)=Y$, and $\sigma|_E^Y$ is naturally isomorphic to $\pi$. The proposition is a consequence of the following identifications:
$$N_k(X)=E\cdot\eta^*N_k(X)\subset N_k(W),$$
$$N_k(\pi)=N_k(\sigma),$$
which are compatible with the respective pseudo-effective cones. 
\end{proof}


\section{The Movable Strong and Weak Conjectures} \label{s:mc}

We next focus our attention on the Strong and Weak Conjectures for movable classes.  It seems likely that whenever the Strong/Weak Conjectures hold, a stronger version will hold for movable classes.

\begin{defn}Let $\pi:X\to Y$ be a morphism of projective varieties. Let
\begin{itemize}
\item $\Mov_k(\pi)$ denote the closed convex cone in $N_k(X)$ generated by movable effective $k$-classes of $X$ contracted by $\pi$,
\item $M_k(\pi)$ denote the subspace of $N_k(X)$ generated by movable effective $k$-classes of $X$ contracted by $\pi$.
\end{itemize}
\end{defn}

The analogues of the Strong and Weak Conjectures for movable classes are:

\begin{conjm} \label{conj:msc}
Let $\pi: X \to Y$ be a morphism of projective varieties.  Let $\alpha \in \Mov_{k}(X)$ satisfy $\pi_{*}\alpha = 0$.  Then $\alpha$ belongs to $\Mov_{k}(\pi)$.
\end{conjm}

\begin{conjmm} \label{conj:mwc}
Let $\pi: X \to Y$ be a morphism of projective varieties.  Let $\alpha \in \Mov_{k}(X)$ satisfy $\pi_{*}\alpha = 0$.  Then $\alpha$ belongs to $M_{k}(\pi)$.
\end{conjmm}

There are no immediate implications between these conjectures and the Strong/Weak Conjectures.

These conjectures seem substantially easier than their non-movable counterparts.  The key insight is that movable cycles should not reflect the properties of special fibers of a map, inviting arguments that only rely on general fibers.  For example, Theorem \ref{nullmovpush} implies that the Movable Strong Conjecture is true for generically finite maps $\pi$ (compare with Corollary \ref{finitecase}).  This should be contrasted with Proposition \ref{prop:birationalequivalenttoSC}, which shows the Strong Conjecture in dimension $n$ for birational maps is equivalent to the Strong Conjecture in all lower dimensions (and for maps of arbitrary relative dimension).

We denote the Movable Strong Conjecture and Movable Weak Conjecture collectively as the Movable Pushforward Conjectures (MPC). As with the case of PC, the MPC reduce immediately to 
maps $\pi: X \to Y$ where
\begin{itemize}
\item $X$ is smooth,
\item $\pi$ is surjective, and has connected fibers
\end{itemize}

\begin{rmk}\label{bothsmconnC}Over $\mathbb C$, we can also assume that $Y$ is smooth. (Let $\pi:X\to Y$ be a surjective projective morphism with connected fibers with $X$ smooth, and let $\pi':X'\to Y'$ be a birational  model of $\pi$ with $X'$ and $Y'$ both smooth, and with birational morphisms $f_X:X'\to X$ and $f_Y:Y'\to Y$. We can assume that $\pi$ and $\pi'$ coincide over an open subset of $Y$. Consequently $\pi'$ also has connected fibers. If $\alpha'$ is a movable $f_X$-lift of $\alpha$ (as in \cite[Corollary 3.12]{fl14}), then $(f_Y)_*(\pi'_*\alpha')=0$ implies $\pi'_*\alpha'=0$ by Theorem \ref{nullmovpush}. If the MPC are true for $\pi'$ and $\alpha'$, then they are also true for $\pi$ and $\alpha$.)
\end{rmk}


\subsection{The Movable Strong Conjecture for curves}



\begin{thrm}
Let $\pi: X \to Y$ be a morphism of projective varieties over $\mathbb{C}$.  Then the MPC hold for curve classes on $X$.
\end{thrm}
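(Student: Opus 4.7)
The plan is to prove the Movable Strong Conjecture (MSC) for curves; the Movable Weak Conjecture then follows immediately because $\Mov_{1}(\pi)\subset M_{1}(\pi)$ and the subspace $M_{1}(\pi)$ is closed in $N_{1}(X)$. By the standard reductions together with Remark \ref{bothsmconnC}, I may assume that $X$ and $Y$ are smooth and $\pi$ is surjective with connected fibers. If $\pi$ has relative dimension $0$, Theorem \ref{nullmovpush} gives $\alpha=0$ immediately, so I assume the relative dimension $e\geq 1$ and let $\iota\colon F\hookrightarrow X$ be the inclusion of a general (smooth) fiber of $\pi$.

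The strategy is cone duality. Over $\mathbb{C}$, BDPP gives $\Mov_{1}(X)=\Eff^{1}(X)^{\vee}$, and since $\pi$ is surjective $(\ker\pi_{*})^{\perp}=\pi^{*}N^{1}(Y)$, so
\[
(\Mov_{1}(X)\cap\ker\pi_{*})^{\vee}=\overline{\Eff^{1}(X)+\pi^{*}N^{1}(Y)}.
\]
Combined with the trivial containment $\Mov_{1}(\pi)\subset\Mov_{1}(X)\cap\ker\pi_{*}$, taking biduals of closed convex cones reduces MSC to
\[
\Mov_{1}(\pi)^{\vee}\subset\overline{\Eff^{1}(X)+\pi^{*}N^{1}(Y)}.
\]
To verify this, fix $D\in\Mov_{1}(\pi)^{\vee}$. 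Because $\pi$ is smooth on a neighborhood of $F$, any strongly movable family of curves on $F$ spreads algebraically to a strongly movable family on $X$ of $\pi$-contracted curves, so $\iota_{*}\Mov_{1}(F)\subset\Mov_{1}(\pi)$; the projection formula then gives $D|_{F}\cdot\gamma\geq 0$ for all $\gamma\in\Mov_{1}(F)$, and BDPP on $F$ yields $D|_{F}\in\Eff^{1}(F)$.

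The final step upgrades pseudo-effectivity on $F$ to a global decomposition via a relative Kodaira lemma: bigness on the general fiber becomes global bigness after twisting by a sufficiently ample pullback from $Y$. Fix ample classes $A$ on $X$ and $H$ on $Y$. For every $\epsilon>0$, $D|_{F}+\epsilon A|_{F}$ is big on $F$, so there exists $N=N(\epsilon)\gg 0$ with $D+\epsilon A+N\pi^{*}H$ big, hence pseudo-effective, on $X$. Rewriting
\[
D+\epsilon A=(D+\epsilon A+N\pi^{*}H)+\pi^{*}(-NH)
\]
places $D+\epsilon A$ in $\Eff^{1}(X)+\pi^{*}N^{1}(Y)$, and letting $\epsilon\to 0$ gives $D$ in the closure, as required. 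The main obstacle is the interplay between the ``spreading'' lemma $\iota_{*}\Mov_{1}(F)\subset\Mov_{1}(\pi)$ and the relative Kodaira step; everything else is soft cone duality. Both ingredients are standard in characteristic zero through generic smoothness and asymptotic section counting, and the hypothesis that the ground field is $\mathbb{C}$ enters only through the use of BDPP on $X$ and on $F$.
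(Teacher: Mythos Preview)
Your argument is correct and gives a genuinely different proof from the paper's. The paper simply quotes Peternell's theorem \cite[6.8 Theorem]{peternell12}, which for a very general fiber $F$ with inclusion $\iota\colon F\hookrightarrow X$ asserts directly that $\iota_{*}\Mov_{1}(F)=\Mov_{1}(X)\cap\ker\pi_{*}$; this is the MSC on the nose. You instead dualize via BDPP: the equality $(\Mov_{1}(X)\cap\ker\pi_{*})^{\vee}=\overline{\Eff^{1}(X)+\pi^{*}N^{1}(Y)}$ reduces the MSC to showing that any $D\in\Mov_{1}(\pi)^{\vee}$ lies in this closure, and you accomplish this with the spreading inclusion $\iota_{*}\Mov_{1}(F)\subset\Mov_{1}(\pi)$ (the ``easy'' half of Peternell), BDPP on $F$ to get $D|_{F}$ pseudo-effective, and a relative Kodaira step to globalize. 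In effect you have reproved the ``hard'' half of Peternell's theorem by divisor duality rather than citing it. Your approach buys a self-contained argument that makes the role of BDPP explicit and yields the useful dual description $\Mov_{1}(\pi)^{\vee}=\{D:D|_{F}\in\Eff^{1}(F)\}$; the paper's citation is shorter and gives the slightly sharper identification of the contracted movable cone with $\iota_{*}\Mov_{1}(F)$.

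Two small points deserve a sentence more of care. First, the spreading claim $\iota_{*}\Mov_{1}(F)\subset\Mov_{1}(\pi)$ needs $F$ \emph{very general}, not merely general: one uses countability of the components of the relative Hilbert scheme $\mathrm{Hilb}(X/Y)$ to guarantee that every covering family of curves on $F$ sits in a component dominating $Y$ whose universal curve then dominates $X$; smoothness of $\pi$ near $F$ alone is not the operative hypothesis. Second, the relative Kodaira step is literally a statement about $\mathbb{Q}$-divisors, so you should note that $D+\epsilon A$ can be approximated by $\mathbb{Q}$-classes whose restriction to $F$ remains big (an open condition) before invoking it. Neither issue affects the validity of the argument.
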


\begin{proof}
Note that the MWC and MSC can be detected after precomposing by any surjective map.  So we may suppose $X$ is smooth. Then apply Theorem \ref{nullmovpush} to the normalization of the Stein factorization of $\pi$ to reduce to the case when $Y$ is normal and $\pi$ has connected fibers. This case is then settled by \cite[6.8 Theorem]{peternell12}. Specifically, 
he shows that if $F$ is a very general fiber of $\pi$,
with inclusion morphism $i:F\to X$, then $i_{*}(\Mov_{1}(F))= \Mov_{1}(X) \cap \ker(\pi_{*}).$
\end{proof}

\begin{cor} \label{cor:msccurvesreldim1}
Let $\pi: X \to Y$ be a surjective morphism of projective varieties over $\mathbb{C}$ of relative dimension $1$.  Let $\alpha$ be a movable curve class with $\pi_{*}\alpha = 0$.  Then $\alpha$ is proportional to the class of a general fiber.
\end{cor}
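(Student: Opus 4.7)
The plan is to mirror the reductions in the proof of the preceding theorem and then use that in relative dimension one a very general fiber is an irreducible curve, so that $\Mov_1(F) = \mathbb{R}_{\geq 0}\cdot[F]$.

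First, as in the proof above, after precomposing with a resolution of singularities (lifting $\alpha$ via surjectivity of pushforward on movable cones), Stein factorizing $\pi = g\circ f$, and replacing the base by its normalization, we may assume $X$ is smooth, $Y$ is normal, and $\pi$ has connected fibers. Theorem \ref{nullmovpush} applied to the generically finite part $g$ guarantees that the new map still contracts (a lift of) $\alpha$, and the class of a general fiber of $\pi$ changes only by a positive scalar under these reductions (most notably, the Stein factorization contributes a factor of $\deg g$). Hence it suffices to prove the corollary under these assumptions.

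Now I would apply Peternell's theorem, exactly as in the proof of the preceding theorem: for a very general fiber $F$ of $\pi$ with inclusion $i\colon F \hookrightarrow X$, one has $i_*(\Mov_1(F)) = \Mov_1(X)\cap\ker(\pi_*)$, so $\alpha = i_*\beta$ for some $\beta \in \Mov_1(F)$. Since $\pi$ has connected fibers of relative dimension one and we work over $\mathbb{C}$, generic smoothness forces a very general $F$ to be a smooth irreducible projective curve. In particular $N_1(F) = \mathbb{R}\cdot[F]$ and $\Mov_1(F) = \mathbb{R}_{\geq 0}\cdot[F]$, so $\beta = c[F]$ for some $c \geq 0$ and $\alpha = c\cdot i_*[F]$ is a non-negative multiple of the general fiber class, which is the desired conclusion.

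The only substantive input beyond the preceding theorem is the geometric observation that connectedness together with relative dimension one and characteristic zero forces a very general fiber to be irreducible rather than merely connected; everything else is formal bookkeeping on top of Peternell's theorem and Theorem \ref{nullmovpush}.
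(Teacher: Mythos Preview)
Your proof is correct and follows essentially the same route as the paper: reduce to $X$ smooth, $Y$ normal, $\pi$ with connected fibers (using the surjectivity of pushforward on movable cones and Theorem \ref{nullmovpush} for the finite part of the Stein factorization), then apply Peternell's theorem and observe that the very general fiber is a smooth irreducible curve so that $\Mov_1(F)=\mathbb{R}_{\geq 0}[F]$. The paper's proof is simply a terser version of exactly this argument.
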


\begin{proof}We may assume that $X$ is smooth, $Y$ is normal, and $\pi$ has connected fibers.
Then the general fiber of $\pi$ is a smooth curve $F$ and $\Mov_1(F)$ is clearly spanned by $[F]$. Conclude by \cite[6.8 Theorem]{peternell12}.
\end{proof}


\subsection{The Movable Strong Conjecture for divisors} \label{sec:movstrongconjdiv}

\begin{thrm}
Let $\pi: X \to Y$ be a morphism of projective varieties over $\mathbb{C}$.  Then the MPC hold for divisor classes on $X$.
\end{thrm}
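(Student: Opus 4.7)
The plan is to reduce to the essential case of relative dimension one and then upgrade the ordinary Strong Conjecture (which already holds by Theorem \ref{firstmovthrm}) to its movable refinement via Nakayama's divisorial Zariski decomposition together with the incompatibility of movability with $\pi$-vertical prime divisor components.

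First I would carry out the standard reductions of Remark \ref{bothsmconnC}, so that $X$ and $Y$ are smooth over $\mathbb C$, and $\pi$ is surjective with connected fibers of some relative dimension $e$. The case $e=0$ is immediate from Theorem \ref{nullmovpush}, and the case $e\geq 2$ is vacuous because then every effective divisor $E$ on $X$ has $\dim\pi(E)\leq\dim Y=n-e\leq n-2<n-1$, so $\pi_{*}[E]=0$ and $\Mov_{n-1}(\pi)=\Mov_{n-1}(X)$. The only case of substance is therefore $e=1$, where $\dim Y=n-1$. Here Proposition \ref{numchar} converts $\pi_{*}\alpha=0$ into $\alpha\cdot\pi^{*}H^{n-1}=0$, which matches the hypothesis of Theorem \ref{firstmovthrm} (with $k-e+1=n-1$) and yields an approximation $\alpha=\lim_i \beta_i$, where each $\beta_i=\sum_j a_{ij}[E_{ij}]$ is a nonnegative combination of classes of effective $\pi$-contracted divisors.

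To pass from effective to movable approximations, apply the $\sigma$-decomposition $\beta_i=P_i+N_i$, with $P_i$ movable and $N_i$ effective, supported on $\sigma$-exceptional prime divisors. Pushing forward, $\pi_{*}P_i+\pi_{*}N_i=0$; since both summands are pseudo-effective, both vanish. The heart of the argument is to show that each $P_i$ is the $\pi$-pullback of a movable divisor class on $Y$, so that it lies in $\Mov_{n-1}(\pi)$. The mechanism is that any $\pi$-vertical prime divisor $\Gamma$ admits a curve $C_\Gamma\subset\Gamma$ in a fiber of $\pi$ with $\Gamma\cdot C_\Gamma<0$, coming from the reducibility of the effective divisor $\pi^{*}\pi(\Gamma)$; meanwhile movability forces $\gamma\cdot C_\Gamma\geq 0$ for any $\gamma\in\Mov_{n-1}(X)$. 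Thus the $\sigma$-multiplicity of $P_i$ along every $\pi$-vertical prime is zero, which forces $P_i$ to be a $\pi$-pullback; pseudo-effectivity and movability of the pulled-back class on $Y$ then follow from those of $P_i$ via the projection formula.

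The main obstacle is the convergence $P_i\to\alpha$: Nakayama's $\sigma$-decomposition is only continuous in the interior of the big cone, whereas our approximations $\beta_i$ may sit on its boundary. A perturbation $\beta_i\mapsto\beta_i+\varepsilon_i A$ for an ample class $A$ with $\varepsilon_i\to 0$ places the sequence in the big regime where continuity of the $\sigma$-decomposition holds; combined with $N_\sigma(\alpha)=0$ (valid because $\alpha$ is itself movable), this yields $N_i\to 0$ and hence $P_i\to\alpha$, placing $\alpha$ in $\Mov_{n-1}(\pi)$. An alternative route is to pass to a birational flattening of $\pi$ where the fiber structure becomes uniform so that the $\sigma$-decomposition can be computed fiber-by-fiber, but the perturbation approach is more elementary.
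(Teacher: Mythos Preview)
Your overall strategy diverges from the paper's, and the ``upgrade'' portion contains genuine gaps.

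\textbf{What the paper does.} In the relative-dimension-one case the paper does \emph{not} first invoke the Strong Conjecture and then upgrade. It uses \cite[Theorem 1.3]{lehmann11} directly: since $\alpha$ restricts trivially to a general fiber, after passing to a birational model one has $\alpha=P_\sigma(\pi^*\beta)$ for some pseudo-effective $\beta$ on $Y$. One then approximates $\beta$ from the \emph{big} side on $Y$, $\beta_i=\beta+\frac{1}{i}\delta$; now $\pi^*\beta_i$ is automatically effective and $\pi$-contracted (it is a pullback), so both pieces of its $\sigma$-decomposition are $\pi$-contracted, and Nakayama's continuity result \cite[III.1.7.(2)]{nakayama04} applies along this \emph{ray} to give $P_\sigma(\pi^*\beta_i)\to P_\sigma(\pi^*\beta)=\alpha$.

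\textbf{On your use of Theorem \ref{firstmovthrm}.} This is a forward reference to Corollary \ref{cor:mscalmostexc} in Section \ref{movclasssection}. There is no logical circularity (that section does not use the divisor MPC), but note that Corollary \ref{cor:mscalmostexc} already asserts the \emph{Movable} Strong Conjecture, so if you are going to cite it, no upgrade is needed at all.

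\textbf{Gaps in the upgrade argument.} Independently of the above, the mechanism you describe does not work:
\begin{itemize}
\item It is false that every $\pi$-vertical prime divisor $\Gamma$ admits a fiber curve with $\Gamma\cdot C_\Gamma<0$. When $\pi^{-1}(\pi(\Gamma))$ is irreducible (e.g.\ $\pi$ smooth over $\pi(\Gamma)$), one has $\Gamma\equiv \pi^*[\pi(\Gamma)]$ up to multiplicity, and $\Gamma\cdot C=0$ for every fiber curve $C$.
\item Movable divisors need not intersect all curves non-negatively once $\dim X\geq 3$; the dual of $\Mov_{n-1}(X)$ is the movable cone of curves, not the Mori cone.
\item Vanishing $\sigma$-multiplicity along $\pi$-vertical primes holds for \emph{any} movable class by definition of $P_\sigma$, so it cannot by itself force $P_i$ to be a $\pi$-pullback. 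The implication ``movable and $\pi_*=0$ $\Rightarrow$ pullback from $Y$ (up to $P_\sigma$ and birational model)'' is exactly the nontrivial content of \cite[Theorem 1.3]{lehmann11} that the paper invokes.
\item The perturbation $\beta_i\mapsto\beta_i+\varepsilon_iA$ destroys $\pi$-contractedness, so $P_\sigma(\beta_i+\varepsilon_iA)$ has no reason to lie in $\Mov_{n-1}(\pi)$. Moreover, Nakayama's continuity of $N_\sigma$ is established along rays from the big interior, not for an arbitrary sequence converging to a boundary class; your $\beta_i$ are not of the form $\alpha+\frac{1}{i}\delta$.
\end{itemize}

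In short: either cite Corollary \ref{cor:mscalmostexc} and stop, or follow the paper and use \cite{lehmann11} to produce the pullback structure \emph{before} taking $\sigma$-decompositions; the intermediate route you sketch does not go through.
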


\begin{proof}
Let $\alpha$ be a movable divisor class on $X$ satisfying $\pi_{*}\alpha = 0$.  By Remark \ref{mindim} we reduce to the case when $c:={\rm reldim}(\pi)\in\{0,1\}$.
When $\pi$ is birational, Theorem \ref{nullmovpush} shows that $\alpha = 0$.  So the MSC holds in this case.

Suppose that $\pi$ has relative dimension $1$.  Using the reduction techniques of \cite{djv13}, we may assume that $X$ and $Y$ are nonsingular and that $\pi$ has connected fibers.  If $b$ is the class of a fiber of $\pi$, then by Corollary \ref{numchar} we have $\alpha \cdot b=0$, so the restriction of $\alpha$ to the general fiber of $\pi$ is numerically trivial. By \cite[Theorem 1.3]{lehmann11}, there exists a commutative diagram 
$$\xymatrix{X'\ar[r]^{f}\ar[d]_{\pi'}& X\ar[d]^{\pi}\\
Y'\ar[r]_g& Y}$$
with $X'$ and $Y'$ nonsingular, with $f$ and $g$ birational, and a pseudo-effective divisor class $\beta$ on $Y'$ such that $P_{\sigma}(f^*\alpha) = P_{\sigma}(\pi'^*\beta)$. It is enough to prove the proposition for $\pi'$ and $P_{\sigma}(f^*\alpha)$. Thus we may assume $\pi=\pi'$ and
$$\alpha =P_{\sigma}(\pi^*\beta)$$ for some pseudo-effective divisor class $\beta$ on $Y$.

Write $\beta$ as a limit of big classes $\beta_i:=\beta+\frac 1{i}\delta$, where $\delta$ is a big divisor class on $Y$. Then $\pi^*\beta_i$ is an effective class, therefore we have a $\sigma$-decomposition
$$\pi^*\beta_i=P_{\sigma}(\pi^*\beta_i)+N_{\sigma}(\pi^*\beta_i),$$
where the positive and negative parts are both effective classes. 
The class $\pi^*\beta_i$ is in $\ker\pi_*$ by the
projection formula. It follows that $P_{\sigma}(\pi^*\beta_i)$ and $N_{\sigma}(\pi^*\beta_i)$ both belong to $\Eff_{n-1}(\pi)$. By \cite[III.1.7.(2) Lemma]{nakayama04}, $P_{\sigma}(\pi^*\beta)$ is the limit of the sequence $P_{\sigma}(\pi^*\beta_i)$, so that $P_{\sigma}(\pi^{*}\beta)$ also belongs to the closed cone $\Mov_{n-1}(\pi)$.
Thus the MSC holds in this case as well.
\end{proof}

\begin{rmk}
A similar argument gives a quick proof of the Strong Conjecture for divisors, using the results of \cite{lehmann11} in place of the arguments of \cite{djv13}.
\end{rmk}

\section{Movable classes: the almost exceptional case} \label{movclasssection}

The main result of the section is Theorem \ref{thrm:almostexceptionalmsc} which proves the Strong Conjecture for movable classes that are ``almost exceptional'', in the sense that their contractibility index (\ref{sec:contractibilityindex}) is one away from the condition for being exceptional.   (Note that the exceptional case is covered by the proof of Theorem \ref{exceptionalclassesthrm} which shows that exceptional nonzero classes are never movable.) Throughout this section we will often work over the complex numbers; this allows us to understand the behavior of birational maps via the following proposition.

\begin{prop}[\cite{fl15} Proposition 3.8] \label{swconjforbirationalsmooth}
Suppose that $\pi: X \to Y$ is a birational morphism of varieties over $\mathbb{C}$ with $Y$ smooth.  Then the kernel of $\pi_{*}: N_{k}(X) \to N_{k}(Y)$ is spanned by classes of effective $k$-cycles contracted by $\pi$.
\end{prop}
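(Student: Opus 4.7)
The plan is to reduce to a Chow-theoretic localization argument on smooth models and then close by induction on $\dim X$, proving along the way the stronger statement that WC holds for any morphism $\pi\colon X\to Y$ of projective complex varieties with $Y$ smooth.

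\textbf{Step 1 (Reduce to $X$ smooth).} First I would take a resolution $\mu\colon X'\to X$, which exists over $\mathbb{C}$. By \cite[Corollary 3.22]{fl13}, $\mu_*\colon\Eff_k(X')\to\Eff_k(X)$ is surjective, and since $N_k$ is the difference of its pseudo-effective cones, $\mu_*\colon N_k(X')\to N_k(X)$ is surjective as well. Given $\alpha\in\ker\pi_*$, lift it to some $\alpha'\in N_k(X')$; then $\alpha'\in\ker(\pi\circ\mu)_*$. Proving the statement for the composition $\pi\circ\mu$ suffices: components of cycles contracted by $\mu$ push down to zero under $\mu_*$, while the remaining components of cycles contracted by $\pi\circ\mu$ map to $k$-cycles of $X$ contracted by $\pi$.

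\textbf{Step 2 (Localization on the smooth model).} Now suppose $X$ and $Y$ are both smooth and $\pi$ is birational. Let $i\colon E\hookrightarrow X$ be the inclusion of the exceptional locus, which by a purity theorem (a consequence of Zariski's Main Theorem for birational morphisms of smooth varieties) is either empty or pure of codimension one. Lift $\alpha$ to $\tilde\alpha\in CH_k(X)_{\mathbb R}$ and set $\beta:=\pi_*\tilde\alpha\in CH_k(Y)_{\mathbb R}$, which is numerically zero on $Y$. Since $Y$ is smooth, the Gysin pullback $\pi^*\beta\in CH_k(X)_{\mathbb R}$ is defined; for any $c\in N^k(X)$, the projection formula gives
\[
\deg\bigl(c\cdot\pi^*\beta\bigr) \;=\; \deg\bigl(\pi_*c\cdot\beta\bigr) \;=\; 0,
\]
so $\pi^*\beta$ is numerically trivial on $X$. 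Moreover $\pi_*\pi^*\beta=\beta$ since $\pi$ is birational. Replacing $\tilde\alpha$ by $\tilde\alpha-\pi^*\beta$, we obtain a Chow lift of $\alpha$ whose pushforward vanishes in $CH_k(Y)_{\mathbb R}$. The Chow localization sequence
\[
CH_k(E)_{\mathbb R}\xrightarrow{i_*}CH_k(X)_{\mathbb R}\xrightarrow{j^*}CH_k(X\setminus E)_{\mathbb R}\to 0,
\]
combined with the isomorphism $X\setminus E\cong Y\setminus\pi(E)$, then places $\tilde\alpha-\pi^*\beta$ in the image of $i_*$. Passing to numerical equivalence, $\alpha=i_*\bar\gamma$ for some $\bar\gamma\in N_k(E)$ satisfying $(\pi|_E)_*\bar\gamma=0$.

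\textbf{Step 3 (Close the induction).} With the stronger claim in hand (WC for any morphism to a smooth target), I would induct on $\dim X$. The base case $\dim X=0$ is trivial. For the inductive step, Steps 1--2 reduce the problem to showing that $\bar\gamma$ lies in the span of classes of $k$-cycles in $E$ contracted by $\pi|_E$. Since $\dim E=\dim X-1$ and $Y$ is still smooth, the inductive hypothesis applies to $\pi|_E\colon E\to Y$; pushing the resulting decomposition forward along $i$ finishes the proof, since any $k$-cycle in $E$ contracted by $\pi|_E$ is also contracted by $\pi$ when viewed in $X$.

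The main obstacle is the passage between Chow and numerical classes in Step 2: one must verify that the Gysin pullback of a numerically trivial Chow class on $Y$ remains numerically trivial on $X$, so that the Chow-level localization descends to numerical equivalence. Both the existence of $\pi^*$ and its compatibility with the projection formula hinge on the smoothness of $Y$, which is precisely the hypothesis of the proposition and cannot be dropped.
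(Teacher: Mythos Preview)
Your Steps 1 and 2 are sound: the resolution reduction and the Chow-localization argument correctly show that, for $\pi$ birational with $X$ and $Y$ both smooth, any $\alpha\in\ker\pi_*$ is numerically the pushforward of a class from the exceptional locus $E$. (The projection formula $\deg(c\cdot\pi^*\beta)=\deg(\pi_*c\cdot\beta)$ is correct but genuinely uses smoothness of $X$, so that $N^k(X)\cong N_{n-k}(X)$ and $\pi_*c$ makes sense.)

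The gap is in Step 3. The inductive hypothesis you announce---that $\ker\pi_*\subseteq N_k(\pi)$ for \emph{every} morphism to a smooth target---is false. For the finite double cover $\pi:\mathbb{P}^1\times\mathbb{P}^1\to\mathbb{P}^2$ both rulings push to the line class, so $[F_1]-[F_2]$ lies in $\ker\pi_*$, yet $\pi$ contracts nothing and $N_1(\pi)=0$. Even if you retreat to the weaker hypothesis ``each component of the source is contracted'' (which $\pi|_E$ satisfies), your Steps 1--2 do not prove that case: they require $\pi$ birational, both for $\pi_*\pi^*\beta=\beta$ and for $\pi^*$ to preserve cycle dimension. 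So the induction never closes. Concretely, after Step 2 you only know $\alpha\in i_*N_k(E)$; since $E$ typically contains $k$-cycles that are \emph{not} $\pi$-contracted (e.g.\ sections of $E\to\pi(E)$ when $k\le\dim\pi(E)$), this is strictly weaker than $\alpha\in N_k(\pi)$, and you have no mechanism to bridge the difference.

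The paper imports this proposition from \cite{fl15} without proof, so there is no in-paper argument to compare against. One way to repair your approach: after Step 1, dominate $\pi$ by a morphism $\sigma:W\to Y$ that is a composition of blow-ups along smooth centers and factors through $X$ (Hironaka applied to $\pi^{-1}$). For a single smooth blow-up the projective-bundle formula gives $\ker\sigma_*$ explicitly as a span of contracted effective classes, and this persists through the composition; pushing forward to $X$ then yields $\alpha\in N_k(\pi)$.
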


We will also need the following properties of basepoint free classes.

\begin{thrm}[\cite{fl13}, Lemma 5.4, Lemma 5.6, Corollary 5.7] \label{thrm:stricttransispullback}
Let $\pi: Y \to X$ be a morphism of projective varieties and let $p: U \to W$ be a basepoint free family of cycles on $X$.
\begin{enumerate}
\item The base change $p_{Y}: U \times_{X} Y \to W$ is also a basepoint free family.  If $X$ is smooth, we have the relation $[F_{p_{Y}}] = \pi^{*}[F_{p}]$.
\item Suppose $X$ is smooth.  For any top-dimensional (effective) cycle $T$ supported on a general fiber $U_{w}$ of $p$, there is a canonical (effective) cycle with support equal to $Y \times_{X} |T|$ whose pushforward to $Y$ represents $\pi^{*}(s|_{T})_{*}[T] \cap Y$. 
\end{enumerate}
In particular, if both $X$ and $Y$ are smooth, $\pi^{*}\bpf^{k}(X) \subset \bpf^{k}(Y)$.  Furthermore, if $Y$ is smooth then the intersection of basepoint free classes on $Y$ is basepoint free.
\end{thrm}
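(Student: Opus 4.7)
The plan is to reduce both assertions to parts (1) and (2) of the theorem, using continuity to upgrade from generators to the full closed cones.

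For the containment $\pi^{*}\bpf^{k}(X)\subset\bpf^{k}(Y)$, I would start from a generator $[F_{p}]\in\bpf^{k}(X)$ coming from a basepoint free family $p:U\to W$ on $X$. Part (1) gives that the base change $p_{Y}:U\times_{X}Y\to W$ is again a basepoint free family, and, since $X$ is smooth, that $[F_{p_{Y}}]=\pi^{*}[F_{p}]$. Hence $\pi^{*}[F_{p}]\in\bpf^{k}(Y)$, and the full containment then follows because $\pi^{*}$ is $\mathbb R$-linear and continuous and $\bpf^{k}(Y)$ is closed.

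For the intersection statement, let $\alpha,\beta\in\bpf^{*}(Y)$ arise from basepoint free families $p:U\to W$ and $q:V\to T$ on the smooth variety $Y$, with flat evaluations $s_{U}:U\to Y$ and $s_{V}:V\to Y$. I would form the exterior product family $p\times q:U\times V\to W\times T$ equipped with the evaluation $s_{U}\times s_{V}:U\times V\to Y\times Y$. The verification that this is a basepoint free family on the smooth variety $Y\times Y$ is routine: equidimensionality of $U\times V$, properness of $p\times q$, correct relative dimension, component surjectivity onto $W\times T$, and flatness of $s_{U}\times s_{V}$ all follow directly from the corresponding properties of $p$ and $q$. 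Its associated class is the exterior product $\alpha\boxtimes\beta$, because the general fiber pushes forward to $F_{p}\boxtimes F_{q}$. Applying the first containment, now already proved, to the diagonal $\Delta:Y\hookrightarrow Y\times Y$ between smooth varieties, I would conclude that $\Delta^{*}(\alpha\boxtimes\beta)\in\bpf^{*}(Y)$, and the standard identity $\Delta^{*}(\alpha\boxtimes\beta)=\alpha\cdot\beta$ on a smooth variety identifies this pullback with the desired intersection. Continuity and bilinearity then propagate the conclusion from generating classes to arbitrary basepoint free classes, and iterating handles more than two factors.

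The key point to track throughout is that the base change of the product family $p\times q$ along $\Delta$ is canonically the fiber product family $U\times_{Y}V\to W\times T$, which thereby serves as an explicit basepoint free witness for $\alpha\cdot\beta$. I do not anticipate a serious obstacle here: the substantive content is already packaged into part (1), and once that tool is in hand, the intersection statement is obtained by a single clever pullback — namely along the diagonal.
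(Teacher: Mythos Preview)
This theorem is not proved in the present paper; it is quoted from \cite{fl13}, with parts (1), (2), and the concluding assertions corresponding to Lemma~5.4, Lemma~5.6, and Corollary~5.7 there. So there is no in-paper argument to compare against, and your task is really to deduce the ``In particular'' and ``Furthermore'' statements from (1) and (2).

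Your derivation is correct. The containment $\pi^{*}\bpf^{k}(X)\subset\bpf^{k}(Y)$ follows immediately from (1) on generators plus linearity and closedness, exactly as you say. Your diagonal argument for the intersection statement is also valid and is the natural way to package the proof: the exterior product $p\times q:U\times V\to W\times T$ is a basepoint free family on the smooth variety $Y\times Y$ with class $\alpha\boxtimes\beta$, and applying the already-established pullback containment to $\Delta:Y\hookrightarrow Y\times Y$ gives $\alpha\cdot\beta=\Delta^{*}(\alpha\boxtimes\beta)\in\bpf^{*}(Y)$. The identification $(U\times V)\times_{Y\times Y,\Delta}Y\cong U\times_{Y}V$ that you note makes $U\times_{Y}V\to W\times T$ the explicit basepoint free witness, which is precisely the construction one expects. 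The only small checks you glossed over are that $W\times T$ is again an (irreducible) variety over the algebraically closed ground field, and that the irreducible components of $U\times V$ are products of components of $U$ and $V$, hence each surjects onto $W\times T$; both are routine. There is no gap.
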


\subsection{Movability and restrictions}

We first identify criteria guaranteeing that the restriction of a movable class to a subvariety is still movable. Throught this section, when $\pi$ is a birational map we will use interchangably the terms ``$\pi$-exceptional'' and ``$\pi$-contracted''.  (Note that these two terms mean the same thing for birational maps by  Example \ref{exmple:exceptionalforbirational}.)

\begin{lem} \label{lem:bpfspanblowup}
Let $\pi: Y \to X$ be a morphism of smooth varieties that is a composition of blow-ups along smooth centers.  Then $N_{k}(Y)$ is spanned by $\pi^{*}N_{k}(X)$ and by a finite set of $\pi$-exceptional effective $k$-cycles, each of which is the pushforward of a basepoint free class on a $\pi$-exceptional divisor.
\end{lem}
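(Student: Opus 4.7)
I will induct on the number of smooth blow-ups in the factorization $\pi=\pi_n\circ\cdots\circ\pi_1$, the base case $\pi=\mathrm{id}$ being trivial. For the inductive step I will write $\pi = \sigma\circ\pi'$ with $\sigma: Y\to Y'$ a single smooth blow-up along a smooth center $Z\subset Y'$ of codimension $r$, and $\pi': Y'\to X$ a shorter composition. Applying the inductive hypothesis to $\pi'$ supplies a spanning set for $N_k(Y')$ of the desired form; pulling this set back along $\sigma$ preserves basepoint freeness by Theorem~\ref{thrm:stricttransispullback}(1), and supports of pulled-back cycles become strict transforms of $\pi'$-exceptional divisors, which remain $\pi$-exceptional. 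Thus the problem reduces to producing appropriate basepoint free classes on the new exceptional divisor $E = \mathbb{P}(N_{Z/Y'})$ of $\sigma$.

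To handle the new contributions, I will use the standard blow-up formula
\[
N_k(Y) = \sigma^* N_k(Y') \oplus \bigoplus_{i=0}^{r-2} j_*\bigl(q^*N_{*}(Z)\cdot \xi^i\bigr),
\]
where $j: E\hookrightarrow Y$, $q: E\to Z$ is the projective bundle map, and $\xi = c_1(\mathcal{O}_E(1))$. The goal becomes showing that each class $j_*(q^*\beta\cdot\xi^i)$, for $\beta\in N_*(Z)$ and $0\le i\le r-2$, lies in the $\mathbb{R}$-span of pushforwards of basepoint free classes on $E$, modulo a summand in $\sigma^* N_k(Y')$.

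The positivity manipulations combine three standard inputs: (a)~for $M$ sufficiently ample on $Z$ the class $\xi+q^*M$ is very ample on $E$, so $\xi = (\xi+q^*M) - q^*M$ is a difference of basepoint free divisor classes; (b)~flat pullback by $q$ preserves basepoint freeness (Theorem~\ref{thrm:stricttransispullback}(1)); and (c)~intersections of basepoint free classes on the smooth variety $E$ are again basepoint free (Theorem~\ref{thrm:stricttransispullback}). Together (a)--(c) show that every monomial $q^*\gamma\cdot\xi^i$ with $\gamma$ basepoint free on $Z$ lies in the $\mathbb{R}$-span of basepoint free classes on $E$.

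The hard part will be handling $q^*\beta$ for $\beta\in N_*(Z)$ not a priori in the span of basepoint free classes on $Z$. My plan is to iteratively absorb such contributions into the inductively-handled piece $\sigma^* N_k(Y')$ using the projection-formula identity
\[
\sigma^*\alpha\cdot[E] = j_*\bigl(q^*i_Z^*\alpha\bigr),\qquad \alpha\in N_*(Y'),
\]
trading a $q^*i_Z^*\alpha$-summand for the class $\sigma^*\alpha\cdot [E]$ and then recursively reducing the $\xi$-power appearing in the residual terms. Since $N_k(Y)$ is finite-dimensional, this process terminates and produces a finite spanning set across the various $\pi$-exceptional divisors.
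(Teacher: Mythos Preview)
Your inductive setup and use of the blow-up formula are correct and match the paper's approach. The problem is the step you flag as ``the hard part''; both your diagnosis of the difficulty and your proposed workaround are off.

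First, the workaround does not do what you claim. The identity $\sigma^*\alpha\cdot[E]=j_*(q^*i_Z^*\alpha)$ is correct, but $\sigma^*\alpha\cdot[E]$ does \emph{not} lie in $\sigma^*N_k(Y')$: it is supported on $E$ and equals $j_*j^*\sigma^*\alpha$, which is exactly the kind of exceptional class you are trying to produce, not absorb. So the trade moves nothing into the ``inductively-handled piece''. Even setting this aside, the identity only touches classes $\beta\in N_*(Z)$ of the form $i_Z^*\alpha$, and the Gysin map $i_Z^*\colon N^*(Y')\to N^*(Z)$ is typically far from surjective (take $Z$ a smooth surface of large Picard rank in $Y'=\mathbb P^4$). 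No recursion on $\xi$-powers fixes this: at $i=0$ you are left with $j_*(q^*\beta)$ and nothing to reduce.

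Second, and more importantly, the ``hard part'' is illusory. On any smooth projective variety $Z$, the space $N_*(Z)$ is already spanned by basepoint free classes; this is the single observation the paper's proof rests on. (One justification: $N^*(Z)$ is generated as a ring by Chern classes of very ample bundles, hence by dual Segre monomials, each of which is the flat pushforward of a complete intersection of very ample divisors on a projective bundle over $Z$; compare Remark~\ref{rmk:irreduciblebpfgens}.) With this in hand there is nothing left to do: choose a finite bpf spanning set for each $N_j(Z)$, pull back by $q$, and intersect with powers of a fixed ample divisor class on $E$ (your $\xi+q^*M$ works just as well). By Theorem~\ref{thrm:stricttransispullback} these are effective bpf classes on $E$; their pushforwards to $Y$, together with $\sigma^*N_k(Y')$, span $N_k(Y)$ by the projective bundle formula, and they are visibly $\pi$-exceptional. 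This is precisely the paper's argument for the single blow-up.
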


\begin{proof}
Using Theorem \ref{thrm:stricttransispullback} inductively, it suffices to consider the case when $\pi$ is the blow-up along a single smooth center $T$.  Since $T$ is smooth, each $N_{j}(T)$ is spanned by basepoint free classes.  Using \cite[Theorem 3.3.(b)]{fulton84} and Proposition \ref{swconjforbirationalsmooth}, we see the kernel of the pushforward map is spanned by the intersection of divisors of a fixed ample divisor class with the pullbacks of these classes.  These intersections represent effective, basepoint free, $\pi$-exceptional cycles by Theorem \ref{thrm:stricttransispullback}.
\end{proof}

\begin{prop} \label{prop:movrestricttoflatfamily}
Let $X$ be a smooth projective variety over $\mathbb{C}$ and let $\alpha$ be the class of a strongly movable family of $k$-cycles $t: R \to S$.  Suppose $U$ is a scheme admitting a flat dominant map $s: U \to X$ and a proper map $q: U \to W$ to an integral variety $W$.  Then for every component $F'$ of a general fiber $F$ of $q$ we have that $\alpha|_{s(F')}$ is movable.

In particular, for any (reduced) component $Z$ of a general member of a basepoint free family on $X$, $\alpha|_{Z}$ is movable on $Z$.
\end{prop}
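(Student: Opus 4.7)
The plan is to work on a birational model $\pi: Y \to X$ that simultaneously resolves singularities and flattens the natural map from the total space $R$ to $X$ of the strongly movable family $t$. By resolution and flattening, $\pi$ may be chosen with $Y$ smooth and factoring as a composition of blow-ups along smooth centers. Under this choice the strict transform family $t': R' \to S$ is basepoint free on $Y$, and pushing forward the strict transform of a general member of $t$ recovers the original cycle, so $\pi_*[t'] = \alpha$.

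By Lemma \ref{lem:bpfspanblowup}, the kernel of $\pi_*: N_k(Y) \to N_k(X)$ is spanned by a finite collection of basepoint free effective $k$-cycles $V_i$, each supported on a $\pi$-exceptional divisor $E_i$. Since $\pi^*\alpha - [t']$ lies in this kernel, I would write
\[
\pi^*\alpha \;=\; [t'] \;+\; \sum_i c_i\,[V_i] \qquad \text{in } N_k(Y).
\]

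The geometric core of the argument, and the main obstacle, is the following claim about the strict transform $Z' \subset Y$ of $Z := s(F')$: for every closed point $p \in Z$, $\pi^{-1}(p) \subseteq Z'$. I would prove this by induction on the number of blow-ups in $\pi$. For a single blow-up of a smooth center $T \subset X$, flatness of $s$ ensures that $s^{-1}(T)$ has pure codimension $\mathrm{codim}_X T$ in every component of $U$; generality of the fiber $F$ then forces $F' \cap s^{-1}(T)$ to have the same expected codimension in $F'$, whence $Z \cap T$ has codimension exactly $\mathrm{codim}_X T$ in $Z$ when non-empty. Either $Z$ avoids $T$ (and the containment is vacuous), or the exceptional fibers of the blow-up of $Z$ along $Z \cap T$, which is $Z'$, match those of $\pi$ over $T$, giving the set-theoretic equality $Z' = \pi^{-1}(Z)$. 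The general case follows by iterating: at each step one base-changes $s: U \to X$ to the next intermediate blow-up, which preserves flatness and generality of the fiber over $W$.

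With the containment $\pi^{-1}(Z) \subseteq Z'$ in hand, I restrict the decomposition above to $Z'$. The basepoint free family $t'$ base-changes to a basepoint free family on $Z'$ via Theorem \ref{thrm:stricttransispullback} (after possibly replacing $Y$ by a further resolution so that $Z'$ is smooth), and its class represents $[t']|_{Z'}$; its pushforward under the birational map $\pi|_{Z'}: Z' \to Z$ is therefore a movable class on $Z$. For each exceptional contribution, $[V_i]|_{Z'}$ is represented by a cycle supported on $V_i \cap Z'$ (well-defined since $Z' \not\subset E_i$), and this cycle is $\pi$-exceptional: any fiber $G$ of $\pi$ meeting $V_i$ has $\dim(G \cap V_i) \geq 1$, and since $\pi^{-1}(Z) \subseteq Z'$ we have $G \subseteq Z'$, so $G \cap V_i \cap Z' = G \cap V_i$ still has positive dimension. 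Pushing forward via $\pi|_{Z'}$ kills these terms, yielding $\alpha|_Z \in \Mov_k(Z)$. The ``in particular'' clause is the special case $q = p$, since a basepoint free family is by definition one whose structure map $s$ is flat.
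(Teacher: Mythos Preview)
Your argument is correct and follows essentially the same route as the paper: pass to a smooth model $\pi:Y\to X$ that is a composition of smooth blow-ups and flattens $t$, decompose $\pi^*\alpha$ as the basepoint free strict-transform class plus $\pi$-exceptional basepoint free pieces via Lemma~\ref{lem:bpfspanblowup}, use flatness of $s$ and its base change to $Y$ to show (by induction on blow-ups) that $Z'$ contains every $\pi$-fiber it meets, and then push forward from $Z'$. One small remark: your parenthetical ``after possibly replacing $Y$ by a further resolution so that $Z'$ is smooth'' is unnecessary---Theorem~\ref{thrm:stricttransispullback}(1) only needs the source $Y$ of the basepoint free family to be smooth, not $Z'$---and dropping it avoids having to revisit the decomposition on a new model.
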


\begin{proof}
Let $\pi: Y \to X$ be a birational map from a smooth model $Y$ that flattens the map $t: R \to X$.  After possibly passing to a higher model, we may assume that $\pi$ is a compositiion of smooth blow-ups.  Let $\alpha'$ be the class of the strict transform family of $t$ on $Y$; note that $\alpha'$ is a basepoint free class.
By Lemma \ref{lem:bpfspanblowup}, we can write $\alpha' = \alpha + [V]$ where $V$ is a (not necessarily effective) linear combination of $\pi$-exceptional cycles that are general members of basepoint free families on exceptional divisors.

Set $U' := U \times_{X} Y$.  Since the natural morphism $s_{Y}: U' \to Y$ is flat, every component of $U'$ dominates $Y$.  Thus, the $s_{Y}$-image of the general fiber of $p_{Y}: U' \to W$ is the strict transform of the $s$-image of a general fiber of $p$.

Suppose that $Z$ is a $d$-dimensional component of a general fiber of $p$ and $Z'$ is its strict transform on $Y$.  We next verify that:
\begin{enumerate}
\item There is a cycle of class $[V \cdot Z']$ supported on $V \cap Z'$.
\item $V \cap Z'$ is a $\pi$-exceptional cycle.
\end{enumerate}
The two properties together show that $\pi_{*}[V \cdot Z'] = 0$.

Arguing by induction on the number of blow-ups, it suffices to consider the case when $\pi$ is a blow-up of a smooth center $W$, $E$ is the exceptional divisor, and $V$ is a $\pi$-exceptional cycle that is basepoint free in $E$.  To verify (1), note that since $E$ is a Cartier divisor whose support does not contain $Z'$, $[Z']|_{E}$ is represented by a cycle $T$ supported on $Z' \cap E$.  We then apply Theorem \ref{thrm:stricttransispullback} to $T$ and to components of $V$ as basepoint free cycles on $E$.  To verify (2), note that since $s$ and $s_{Y}$ are flat, codimension is preserved upon taking preimages.  In particular, the codimension of $W \cap Z$ in $Z$ is the same as the codimension of $W$ in $X$, and $E \cap Z'$ has codimension $1$ in $Z'$.  Since the fibers of the blow-up of $W$ in $X$ are irreducible, the only way this can happen is if $Z'$ contains every fiber of $\pi$ that it intersects.  Then since $V$ is contracted by $\pi$, $V \cap Z'$ is also contracted by $\pi$.

Thus 
\begin{align*}
\alpha|_{Z} & = (\pi|_{Z'})_{*}(\pi^{*}\alpha|_{Z'}) \\
& = (\pi|_{Z'})_{*}(\alpha'|_{Z'} + [V]|_{Z'}) \\
& = (\pi|_{Z'})_{*}(\alpha'|_{Z'}).
\end{align*}
Since $\alpha'$ is basepoint free, the restriction to $Z'$ is also basepoint free, and hence its pushforward is movable.
\end{proof}

Immediately from Proposition \ref{prop:movrestricttoflatfamily} we obtain:

\begin{cor} \label{cor:movrestricttovgbpf}
Let $X$ be a smooth projective variety over $\mathbb{C}$ and let $\alpha \in \Mov_{k}(X)$.  Suppose $U$ is a scheme admitting a flat dominant map $s: U \to X$ and a proper map $q: U \to W$ to an integral variety $W$.  Then for every component $F'$ of a very general fiber $F$ of $q$ we have that $\alpha|_{s(F')}$ is movable.

In particular, for any (reduced) component $Z$ of a very general member of a basepoint free family on $X$, $\alpha|_{Z}$ is movable on $Z$.
\end{cor}

\subsection{Almost exceptional classes: special case}

We show in Corollary \ref{cor:fiberspecialcase} that the class of a general fiber of a dominant morphism of projective varieties $\pi:X\to Y$ of relative dimension $k$ over $\mathbb C$ is the only class in $\Mov_k(X)$ satisfying $\alpha\cdot\pi^*h=0$ for some (equivalently any) ample divisor class $h$ on $Y$. This will play an important role in the proof of Theorem \ref{thrm:almostexceptionalmsc} in the next subsection.

\begin{lem}\label{lem:controllingsubvarietyalt}Let $X$ be a smooth projective variety of dimension $n$ over $\mathbb C$, and let $\alpha\in \Mov_k(X)$. Then there exists a smooth projective variety $Z$ of dimension $n-k+1$ with a morphism $f:Z\to X$
such that $f_*:N_{n-k}(Z)\to N_{n-k}(X)$ is surjective, and $f^*\alpha\in\Mov_1(X)$.
\end{lem}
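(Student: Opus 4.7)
My plan is to follow the strategy indicated by the deleted Lemma~\ref{lem:controllingsubvariety}. First, I would fix classes $[V_1],\dots,[V_r]$ spanning $N_{n-k}(X)$, each realized by a very general member of a basepoint-free family of $(n-k)$-cycles on $X$. I would then pass to a smooth birational model $\pi:W\to X$ that simultaneously flattens these $r$ families and a strongly movable family representing $\alpha$. On $W$, the strict transforms $\widetilde V_j$ are well-behaved, and $\pi^{*}\alpha\in\Mov_k(W)$: the latter follows by taking the strict transform of a strongly movable family defining $\alpha$ and appealing to Lemma~\ref{lem:bpfspanblowup}.

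Next, I would construct $Z\subset W$ as a general complete intersection of $k-1$ divisors in the sublinear system $|mH\otimes\mathcal{I}_{\bigcup\widetilde V_j}|$, for a very ample class $H$ and $m\gg 0$. For such $m$, this subsystem is base-point-free outside $\bigcup\widetilde V_j$. A Bertini argument then shows that a general complete intersection is smooth of the expected dimension $n-k+1$ away from $\bigcup\widetilde V_j$; after blowing up the base locus if necessary, I obtain a smooth irreducible $Z$ of dimension $n-k+1$ containing (the strict transforms of) each $\widetilde V_j$. Let $f$ be the composition $Z\to W\xrightarrow{\pi} X$.

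The surjectivity of $f_{*}:N_{n-k}(Z)\to N_{n-k}(X)$ is then immediate, since $[V_j]=f_{*}[\widetilde V_j]$ for each $j$ and the $[V_j]$ span by construction. For the movability of $f^{*}\alpha$: the construction exhibits $Z$ (after the resolution) as a component of a very general member of a basepoint-free family on the blow-up arising from the linear subsystem, so Corollary~\ref{cor:movrestricttovgbpf} applied to the movable class $\pi^{*}\alpha\in\Mov_k(W)$ yields $f^{*}\alpha\in\Mov_1(Z)$. The dimension check $\dim Z-(n-k)=1$ confirms that the restricted class indeed has dimension $1$.

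The hardest step is the simultaneous fulfillment of the two competing requirements on $Z$: containing every $\widetilde V_j$ (needed for surjectivity of $f_{*}$) while still being a very general member of a basepoint-free family on some smooth birational model (needed for the movability of $f^{*}\alpha$). This reconciliation hinges on a careful choice of linear subsystem, verification that its base locus behaves well, and a resolution argument showing that after blowing up $\bigcup\widetilde V_j$ the induced linear system gives rise to a basepoint-free family in the precise sense of the paper, so that Corollary~\ref{cor:movrestricttovgbpf} genuinely applies to the resolved $Z$.
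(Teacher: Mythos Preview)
Your proposal has a genuine gap at the step ``$\pi^{*}\alpha\in\Mov_k(W)$''. First, $\alpha$ is only a \emph{limit} of classes of strongly movable families, so you cannot flatten ``a strongly movable family representing $\alpha$''. Second, even for a single strongly movable family $q$ with $[q]=\alpha$, flattening via a birational $\pi:W\to X$ gives a basepoint free strict transform family of class $\alpha'$ on $W$, but Lemma~\ref{lem:bpfspanblowup} only yields $\alpha'=\pi^{*}\alpha+[V]$ with $V$ a (not necessarily effective) combination of exceptional cycles; this does not make $\pi^{*}\alpha=\alpha'-[V]$ movable. Indeed the proof of Proposition~\ref{prop:movrestricttoflatfamily} never asserts that $\pi^{*}\alpha$ is movable on the flattening model---it instead shows that $[V]|_{Z'}$ is $\pi$-contracted and disappears after pushforward. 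Since your application of Corollary~\ref{cor:movrestricttovgbpf} requires a movable class on (the blow-up of) $W$, the argument breaks. A secondary issue is smoothness: your blow-up centers $\bigcup\widetilde V_j$ are typically singular and intersecting, so neither the blow-up nor the eventual $Z$ is obviously smooth.

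The paper's proof circumvents both problems by replacing the birational model with a \emph{smooth} morphism $\pi:P\to X$ (a tower of projective bundles $P=\prod_X\mathbb P(E_i)$), so that $\pi^{*}\alpha$ is movable automatically. The generators of $N_{n-k}(X)$ are realised as flat images $T_{\underline i}=\pi_{*}Q_{\underline i}$ of \emph{smooth, disjoint} complete intersections $Q_{\underline i}\subset P$ of the tautological divisors (Bertini plus a dimension count). Blowing up $P$ along these smooth disjoint centers gives a smooth $\widetilde P$, and because the $Q_{\underline i}$ are chosen very general with respect to the countably many strongly movable families $q_j$ limiting to $\alpha$, the pullbacks $\pi^{*}q_j$ meet each $Q_{\underline i}$ properly; hence $\sigma^{*}\pi^{*}[q_j]$ is the strict transform class and $\sigma^{*}\pi^{*}\alpha$ is movable on $\widetilde P$. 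Then $Z$ is a general ample complete intersection on $\widetilde P$, which is smooth, meets each exceptional divisor in a cycle dominating $Q_{\underline i}$ (hence $f_{*}$ hits every $[T_{\underline i}]$), and $f^{*}\alpha=(\sigma^{*}\pi^{*}\alpha)|_{Z}$ is movable because intersecting a movable class with amples stays movable. The passage through a smooth bundle map, rather than a birational model, is the key idea your outline is missing.
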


\begin{proof}Choose a finite set of $r$ very ample vector bundles $E_i$ on $X$ such that $N^*(X)$ is generated \emph{as a ring} by the Segre classes $s_j(E_i^{\vee})$, and in particular $N^k(X)=N_{n-k}(X)$ is generated as a vector space by monomials of weight $k$ in these Segre classes. In this list repeat each bundle $k$ times (this will make it possible to write any monomial in dual Segre classes of bundles in this set as a monomial in dual Segre classes of different (occurrences) of these bundles in the set). Put $P=\prod_X\mathbb P(E_i)$, and let $\pi:P\to X$ denote the projection map of relative dimension denoted by $p$. Let $\xi_i$ be the pullbacks to $P$ of $c_i(\mathcal O_{\mathbb P(E_i)}(1))$. Using the proof of \cite[Proposition 3.1.(b)]{fulton84}, we see that $N_{n-k}(X)$ is generated by $\pi_*(\prod_{j=1}^{p+k}\xi_{i_j})$, where $i_j$ are arbitrary indices in the set $\{1,\ldots,r\}$ (they may repeat, and the number of occurrences the index $i$ determines which Segre class of $E_i^{\vee}$ appears in the resulting monomial in dual Segre classes of the $E_i$'s).

By Bertini, we can choose smooth representatives $Q_{\underline i}$, one for each $\prod_{j=1}^{p+k}\xi_{i_j}$. We can also ensure that they are disjoint as long as $2(p+k)>n+p$, which can be achieved for example by adding $\mathcal O(A)^{\oplus n+1}$ to the list, for some very ample $A$ on $X$. Put $T_{\underline i}:=\pi_*Q_{\underline i}$. These are cycles on $X$ whose classes generate $N_{n-k}(X)$ by the previous paragraph. 

Let $\widetilde P$ be the blow-up of $P$ along all $Q_i$. Let $Z$ be a general complete intersection of dimension $n-k+1$ on $\widetilde P$. This is smooth, and for each $i$ it contains an $(n-k)$-dimension cycle $\widetilde Q_{\underline i}$ that dominates $Q_{\underline i}$ (because the exceptional divisors are projective bundles over $Q_i$ of relative dimension $p+k-1$, which is the codimension of $Z$). If $f:Z\to X$ denotes the induced map, then $f_*:N_{n-k}(Z)\to N_{n-k}(X)$ is surjective. It remains to show that $f^*\alpha$ is movable.

Let $\sigma:\widetilde P\to P$ be the blow-up map. Since the intersection of an ample class with a movable class is movable (see \cite[Lemma 3.12]{fl14}), and a smooth pullback of a movable class from a smooth variety is movable (see \cite[Lemma 3.6.(2)]{fl14}), it is enough to check that $\sigma^*\pi^*\alpha$ is movable. Let $q_j$ be a sequence of strictly movable families such that $\alpha=\lim_{j\to\infty}[q_j]$. For very general choices of $Q_{\underline i}$, the general member of $\pi^*q_j$ meets $Q_{\underline i}$ properly for all ${\underline i}$ and for all $j$. Then $\sigma^*\pi^*[q_j]$ represents the strict transform in $\widetilde P$ of the strictly movable family $\pi^*q_j$ by \cite[Corollary 6.7.2]{fulton84}. Consequently $\sigma^*\pi^*\alpha$ is still movable.  
\end{proof}

\begin{rmk} \label{rmk:irreduciblebpfgens}
By the first two paragraphs of the argument, any smooth projective variety $X$ over $\mathbb{C}$ admits a basis of $N^{k}(X)$ consisting of classes of basepoint free families whose total space $U$ and general fiber are irreducible.  (Indeed, we can take the $Q_{\underline i}$ to be irreducible by the Bertini theorems, so that the $T_{\underline i}$ are also irreducible. Complete intersections $Q_{\underline i}$ of globally generated line bundles are cycles in basepoint free families on $P$, and then so are their flat pushforwards $T_{\underline i}$ on $X$.)

Moreover, if $f:X\to Y$ is a dominant morphism to another projective variety $Y$, with $\dim Y\geq n-k$, we can also arrange that $\dim f(T_{\underline i})=n-k$ for every $i$. (Indeed we claim that we can choose $T_{\underline i}$ such that $[T_{\underline i}]$ belongs to the interior of $\bpf^k(X)$, hence in particular to the interior of $\Nef^k(X)$. Assuming this, if $\dim f(T_{\underline i})<n-k$, then $[T_{\underline i}]\cdot f^*h^{n-k}=0$ 
for any ample $h\in N^1(Y)$, which implies $f^*h^{n-k}=0$, leading to the contradiction $\dim Y<n-k$. For the claim, replace first each $E_{i}$ by $E_{i}\otimes\det E_i$, and add the ample line bundles $\det E_i$ to the initial list. These have
the same linear span of dual Segre monomials. Since complete intersections belong to the interior of $\bpf^r(X)$ for all $r$, and dual Segre monomials of globally generated bundles are basepoint free, by Theorem \ref{thrm:stricttransispullback} it is enough to check that $s_j((E_i\otimes\det E_i)^{\vee})$ belongs to the interior of $\bpf^j(X)$. By \cite[Example 3.1.1]{fulton84}, the class $s_j((E_i\otimes\det E_i)^{\vee})$ is a positive combination of classes in $\bpf^j(X)$, one of which is a positive multiple of the interior class $c_1^j(E_i)$.)\qed
\end{rmk}

\begin{cor} \label{cor:fiberspecialcase}
Let $\pi: X \to Y$ be a surjective morphism of projective varieties over $\mathbb{C}$ with relative dimension $k$.  Let $\alpha \in \Mov_{k}(X)$ be such that $\alpha \cdot \pi^{*}H = 0$ for an ample divisor $H$ on $Y$.  Then $\alpha$ is proportional to the class of a fiber.  
\end{cor}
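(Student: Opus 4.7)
My plan is to reduce the problem to the known curve case (Corollary \ref{cor:msccurvesreldim1}) by cutting $X$ down with a morphism $f:Z\to X$ from a variety of dimension one larger than $Y$, and then to recover the statement for $\alpha$ by a projection-formula duality argument. First I would reduce to $X$ smooth by precomposing with a resolution $\pi':X'\to X$ and lifting $\alpha$ to a movable class $\alpha'\in\Mov_k(X')$ via \cite[Corollary 3.12]{fl14}; the condition $\alpha'\cdot(\pi\circ\pi')^*H=0$ is preserved by pullback, and since a general fiber of $\pi\circ\pi'$ maps birationally onto a general fiber of $\pi$, proving the statement for $\alpha'$ yields it for $\alpha$ upon pushing forward.

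Next, assuming $X$ is smooth of dimension $n$, I apply Lemma \ref{lem:controllingsubvarietyalt} to produce a smooth projective variety $Z$ of dimension $n-k+1$ together with a morphism $f:Z\to X$ such that $f^*\alpha\in\Mov_1(Z)$ and $f_*:N^1(Z)\to N^k(X)$ is surjective. The construction in the lemma presents $Z$ as a general complete intersection in a blow-up of $\prod_X\mathbb{P}(E_i)$ whose composition to $Y$ is surjective; a Bertini argument then guarantees that $g:=\pi\circ f:Z\to Y$ is surjective of relative dimension one.

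The hypothesis gives $f^*\alpha\cdot g^*H=f^*(\alpha\cdot\pi^*H)=0$, so by Proposition \ref{numchar} we have $g_*f^*\alpha=0$. Corollary \ref{cor:msccurvesreldim1} then supplies a constant $c\in\mathbb{R}$ with $f^*\alpha=c[G]$, where $G$ is a general fiber of $g$. Since $G=f^{-1}(F)$ for $F=\pi^{-1}(y)$ a general fiber of $\pi$, and the intersection has the expected dimension one on the smooth variety $Z$, we have $[G]=f^*[F]$ in $N_1(Z)$. Using the projection formula, for any $\gamma\in N^1(Z)$,
\[
\alpha\cdot f_*\gamma \;=\; f_*\bigl(f^*\alpha\cdot\gamma\bigr) \;=\; c\,f_*\bigl(f^*[F]\cdot\gamma\bigr) \;=\; c\,[F]\cdot f_*\gamma.
\]
The surjectivity of $f_*$ onto $N^k(X)$ then forces $\alpha=c[F]$, which is the desired proportionality.

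The step I expect to be most delicate is extracting from Lemma \ref{lem:controllingsubvarietyalt} the additional feature that $g=\pi\circ f$ is surjective of relative dimension one, and establishing the intersection-theoretic identity $[G]=f^*[F]$ even though $f$ need not be flat; both rely on generic flatness of $\pi$ combined with the genericity of $Z$ as a complete intersection in the blow-up, ensuring that $f^{-1}(F)$ has the expected dimension for general $y\in Y$.
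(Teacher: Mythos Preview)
Your proof follows essentially the same route as the paper's: reduce to the smooth case, invoke Lemma~\ref{lem:controllingsubvarietyalt} to produce $f:Z\to X$ with $f^*\alpha$ movable and $f_*$ surjective on $N_{n-k}$, apply Corollary~\ref{cor:msccurvesreldim1} to the curve class $f^*\alpha$ on $Z$, and then use the projection formula plus surjectivity of $f_*$ to conclude. One small correction: in your singular reduction, the vanishing $\alpha'\cdot(\pi\circ\pi')^*H=0$ is \emph{not} ``preserved by pullback'' (your $\alpha'$ is a movable $\pi'_*$-preimage, not $\pi'^*\alpha$); rather, $\alpha'\cdot\pi'^*\pi^*H$ is movable and $\pi'_*$ sends it to $\alpha\cdot\pi^*H=0$, so it vanishes by Theorem~\ref{nullmovpush}, exactly as the paper argues. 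Your identification $[G]=f^*[F]$ can be made painless by writing $[F]=\pi^*h^{\dim Y}/(h^{\dim Y})$ for $h$ ample on $Y$, so that $f^*[F]=g^*h^{\dim Y}/(h^{\dim Y})=[G]$; this sidesteps any flatness or expected-dimension concerns.
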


\begin{proof}
First suppose $X$ is smooth.  Choose $Z$ as in Lemma \ref{lem:controllingsubvarietyalt} with morphism $f:Z\to X$.  Note that $\pi\circ f: Z \to Y$ has relative dimension $1$.  Then $f^*\alpha$ is a movable curve class that pushes forward to $0$ on $Y$.  Thus it is proportional to the class of a fiber by Corollary \ref{cor:msccurvesreldim1}.  So for any divisor $D$ on $Z$ we have $$D \cdot \alpha = c\deg(\pi\circ f|_{D}).$$ But since $f_{*}:N_{n-k}(Z) \to N_{n-k}(X)$ is surjective, the same proportionality relationship holds for $(n-k)$-cycles on $X$.  So $\alpha$ is proportional to the class of a general fiber of the map $\pi$.

When $X$ is singular, let $\phi: X' \to X$ be a smooth birational model and let $\alpha'$ be a movable preimage of $\alpha$. By Theorem \ref{nullmovpush}, $\alpha'\cdot \phi^*\pi^*H=0$.  Applying the smooth case to $\alpha'$, we see that $\alpha'$ is proportional to the class of a general fiber of $\pi \circ \phi$.  Pushing forward, we see that $\alpha$ is also proportional to the class of a general fiber of $\pi$.
\end{proof}

\subsection{Almost exceptional classes: general case}

In Theorem \ref{thrm:almostexceptionalmsc} and its corollary we prove the almost exceptional case of the Movable Strong Conjecture over $\mathbb C$, and discuss how this settles most of the cases of the Strong Conjecture for morphisms from complex fourfolds. 

\begin{lem} \label{lem:finitepullback}
Let $g: X \to Y$ be a finite dominant map of projective varieties with $Y$ smooth.  Let $\alpha \in N^{n-k}(X)$.  Suppose that there is a finite collection of $(n-k)$-dimensional subvarieties $\{ W_{i} \}$ of $Y$ containing general points of $Y$, such that if $Z_{1}$ and $Z_{2}$ are $(n-k)$-dimensional integral subvarieties of $X$ both mapping to the same $W_{i}$, then $$\frac{\alpha{\cdotp}Z_{1}}{\deg(Z_{1}/W_{i})} = \frac{\alpha{\cdotp}Z_{2}}{\deg(Z_{2}/W_{i})}.$$  Then there is some $\beta \in N^{n-k}(Y)$ so that for any $Z$ above one of the $W_{i}$ we have $\alpha \cdot Z = g^{*}{\beta} \cdot Z$.   If $\alpha \cap [X]$ is movable, we may ensure that $\beta \cap [Y]$ is also movable.
\end{lem}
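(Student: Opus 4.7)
The plan is to take $\beta := \frac{1}{d}\, g_{*}(\alpha) \in N^{n-k}(Y)$, where $d = \deg g$. Here we interpret $\alpha$ as the class $\alpha \cap [X] \in N_{k}(X)$, push it forward to $N_{k}(Y)$, and identify $N_{k}(Y)$ with $N^{n-k}(Y)$ using the Poincar\'e duality isomorphism valid since $Y$ is smooth of dimension $n$ (and $\dim X = \dim Y = n$ because $g$ is finite dominant). This $\beta$ is canonical; the content of the lemma is then that it automatically enjoys the asserted intersection-theoretic and movability properties.

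To verify $g^{*}\beta \cdot Z = \alpha \cdot Z$ for any $Z$ above some $W_{i}$, denote by $c_{i}$ the common value $(\alpha \cdot Z')/\deg(Z'/W_{i})$ for $Z'$ above $W_{i}$. The projection formula gives $g_{*}(g^{*}\beta \cap [Z]) = \beta \cap g_{*}[Z] = \deg(Z/W_{i})\,(\beta \cap [W_{i}])$, and since pushforward of a $0$-cycle class preserves degree, $g^{*}\beta \cdot Z = \deg(Z/W_{i})\,(\beta \cdot [W_{i}])$. By a second application of the projection formula, $\beta \cdot [W_{i}] = \frac{1}{d}\,\alpha \cdot g^{*}[W_{i}]$. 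It therefore suffices to establish the key identity $\alpha \cdot g^{*}[W_{i}] = d\, c_{i}$.

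For this key step, observe that $g^{*}[W_{i}] \cap [X]$ is an $(n-k)$-dimensional cycle class supported on $g^{-1}(W_{i})$; since $g$ is finite, every irreducible component of $g^{-1}(W_{i})$ is $(n-k)$-dimensional and dominates $W_{i}$ (these are precisely the $Z_{j}$ above $W_{i}$). Hence $g^{*}[W_{i}] \cap [X] = \sum_{j} m_{j}\,[Z_{j}]$ numerically for some $m_{j} \in \mathbb{R}$. Pushing forward via $g$ and using $g_{*}[X] = d\,[Y]$ yields $d\,[W_{i}] = \sum_{j} m_{j} \deg(Z_{j}/W_{i})\,[W_{i}]$, from which $\sum_{j} m_{j} \deg(Z_{j}/W_{i}) = d$. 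Using the hypothesis $\alpha \cdot Z_{j} = \deg(Z_{j}/W_{i})\,c_{i}$, we compute
\[
\alpha \cdot g^{*}[W_{i}] \;=\; \sum_{j} m_{j}\,(\alpha \cdot Z_{j}) \;=\; c_{i} \sum_{j} m_{j}\,\deg(Z_{j}/W_{i}) \;=\; d\,c_{i},
\]
as desired. Combining with the preceding paragraph, $g^{*}\beta \cdot Z = \deg(Z/W_{i})\,c_{i} = \alpha \cdot Z$.

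For the final claim, if $\alpha \cap [X] \in \Mov_{k}(X)$, then $g_{*}(\alpha \cap [X]) \in \Mov_{k}(Y)$ by the surjectivity of movable pushforward (\cite[Corollary 3.12]{fl14}), so $\beta \cap [Y]$, which is a positive multiple of this pushforward, is also movable. The main subtlety lies in the cycle-level decomposition $g^{*}[W_{i}] \cap [X] = \sum_{j} m_{j}[Z_{j}]$: one must verify that the refined Gysin pullback (well-defined since $Y$ is smooth) produces a class supported on $g^{-1}(W_{i})$, so that it decomposes over the set-theoretic components $Z_{j}$. Once this is granted, the hypothesis on $\alpha$ is perfectly suited to collapse the sum into the single constant $c_{i}$, and the rest of the argument is pure projection formula.
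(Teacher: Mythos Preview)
Your proof is correct and follows the same strategy as the paper: set $\beta=\frac{1}{d}g_*(\alpha\cap[X])$ and verify the intersection identity via the projection formula. The only substantive difference is in establishing $\alpha\cdot g^*[W_i]=dc_i$. The paper explicitly identifies the multiplicities in $g^*[W_i]\cap[X]=\sum_j m_j[Z_j]$ as the ramification degrees $\mathrm{ramdeg}(Z_j/W_i)$ (by showing $g^*[W_i]\cap[X]=[g^{-1}W_i]$, which uses the generality hypothesis on $W_i$) and then invokes \cite[Example 4.3.7]{fulton84} to obtain $\sum_j\mathrm{ramdeg}(Z_j/W_i)\deg(Z_j/W_i)=d$. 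You instead leave the $m_j$ unspecified and extract $\sum_j m_j\deg(Z_j/W_i)=d$ directly from $g_*(g^*[W_i]\cap[X])=d[W_i]$, which is slightly slicker and does not require the generality assumption at this step. Your appeal to the refined Gysin pullback landing in $A_{n-k}(g^{-1}W_i)$ (free on the $Z_j$ since $g$ is finite) is exactly what is needed to justify the decomposition.
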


\begin{proof}
Let $d = \deg(X/Y)$.  Set $\beta = \frac{1}{d}{\cdotp}g_{*}(\alpha \cap [X])$; since $Y$ is smooth we can think of $\beta \in N^{n-k}(Y)$.  If $E$ is an irreducible subvariety with $g(E) = W_{i}$, we have
\begin{align*}
g^{*}\beta \cdot E & = \beta \cdot g_{*}E = \frac{1}{d}\cdot g_{*}(\alpha \cap [X]) \cdot \deg(E/W_{i})W_{i} \\
& = \frac{\deg(E/W_{i})}{d} \alpha \cdot (g^{*}[W_{i}] \cap [X]).
\end{align*}
We check that $g^*[W_i]\cap[X]=[g^{-1}W_i]$. It is enough to check the equality in Chow groups. Note that $g^{-1}W_i$ has the expected dimension because $Y$ is smooth and $g$ is finite. Using the restriction sequence for Chow groups, the finiteness of $g$, and the generality assumption on $W_i$, it is enough to check the equality of Chow classes over the flat locus of $g$. Over the flat locus the equality is the definition of the flat pullback of $W_i$ via the compatibility between flat and smooth pullback (see \cite[Proposition 8.1.2.(a)]{fulton84}).

Using this equality, we find
\begin{align*}
g^{*}\beta \cdot E  & = \frac{\deg(E/W_{i})}{d} \alpha \cdot \sum_{g(E_{j}) = W_{i}} \mathrm{ramdeg}(E_{j}/W_{i}) E_{j} \\
& = \frac{\deg(E/W_{i})}{d} \cdot \sum_{g(E_{j}) = W_{i}} \mathrm{ramdeg}(E_{j}/W_{i})\cdot\frac{\deg(E_{j}/W_{i})}{\deg(E/W_{i})}{\cdotp} \alpha \cdot E.
\end{align*}
Since the $W_{i}$ contain general points, the smooth locus of any component of the preimage intersects the smooth locus of $X$, so that by \cite[Example 4.3.7]{fulton84}
\begin{equation*}
d =  \sum_{g(E_{j}) = W_{i}} \mathrm{ramdeg}(E_{j}/W_{i})\cdot \deg(E_{j}/W_{i}).
\end{equation*}
This implies that $g^{*}\beta \cdot E = \alpha \cdot E$.  The final statement follows since $\beta$ is proportional to the pushforward of $\alpha$ and movability is preserved by pushforward.
\end{proof}

We also need a version of Lemma \ref{lem:finitepullback} where the morphism $\pi$ is allowed to be generically finite.  The cost is that we need stronger positivity assumptions on the cycles involved.

\begin{lem} \label{lem:genfinitepullback}
Let $\pi: X \to Y$ be a generically finite dominant map of smooth projective varieties over $\mathbb{C}$.  Let $\alpha \in N_{k}(X)$.  Let $T_{1}, \ldots, T_{r}$ be $(n-k)$-cycles on $Y$ which are components of general members of bpf families.  Suppose that for each $T_{i}$ there is a constant $s_{i}$ so that
\begin{equation*}
\alpha \cdot Z =  s_{i} \deg(Z/T_{i})
\end{equation*}
for any subvariety $Z$ lying above $T_{i}$.  Then there is a class $\beta \in N^{n-k}(Y)$ such that $\alpha \cdot Z = \pi^{*}\beta \cdot Z$ for any $Z$ lying above some $T_{i}$.

If $\alpha$ is movable, then so is $\beta \cap [Y]$.
\end{lem}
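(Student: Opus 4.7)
The plan is to define $\beta$ as an explicit scalar multiple of $\pi_*\alpha$ and then verify the desired relation via the projection formula combined with Theorem \ref{thrm:stricttransispullback}(2), which supplies an effective representative of $\pi^*[T_i]$.

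First I would set $d=\deg(\pi)$ and put $\beta:=\tfrac{1}{d}\pi_*\alpha\in N_k(Y)$, identified with $N^{n-k}(Y)$ via the smoothness of $Y$. The movability claim is then immediate: if $\alpha\in\Mov_k(X)$, then $\pi_*\alpha\in\Mov_k(Y)$ by \cite[Corollary 3.12]{fl14}, so $\beta\cap[Y]$ is movable.

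The core of the argument is the evaluation $\beta\cdot[T_i]=s_i$. Since $T_i$ is a component of a general member of a basepoint free family on $Y$, Theorem \ref{thrm:stricttransispullback}(2) applied to $\pi:X\to Y$ with $T=T_i$ produces an effective cycle $\sum_j m_{ij}[Z_{ij}]$ representing $\pi^*[T_i]\cap[X]$ whose support is exactly $\pi^{-1}(T_i)=\bigcup_j Z_{ij}$. The projection formula then gives
\[
d\,[T_i]=[T_i]\cap\pi_*[X]=\pi_*\pi^*[T_i]=\sum_j m_{ij}\deg(Z_{ij}/T_i)\,[T_i],
\]
so $\sum_j m_{ij}\deg(Z_{ij}/T_i)=d$. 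Combining this with the hypothesis $\alpha\cdot Z_{ij}=s_i\deg(Z_{ij}/T_i)$ yields
\[
\alpha\cdot\pi^*[T_i]=\sum_j m_{ij}(\alpha\cdot Z_{ij})=s_i\sum_j m_{ij}\deg(Z_{ij}/T_i)=s_id,
\]
whence $\beta\cdot[T_i]=\tfrac{1}{d}\alpha\cdot\pi^*[T_i]=s_i$ by one more application of the projection formula.

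For the final step, given any subvariety $Z$ lying above some $T_i$, the projection formula and $\pi_*[Z]=\deg(Z/T_i)[T_i]$ yield
\[
\pi^*\beta\cdot Z=\beta\cdot\pi_*[Z]=\deg(Z/T_i)\,\beta\cdot[T_i]=\deg(Z/T_i)\,s_i=\alpha\cdot Z,
\]
as required. The only nontrivial technical input is Theorem \ref{thrm:stricttransispullback}(2), which is exactly where the ``general member of a bpf family'' hypothesis is used to guarantee an effective representative of $\pi^*[T_i]$ with the expected support; the rest of the argument is bookkeeping with the projection formula, parallel in spirit to the finite-map version in Lemma \ref{lem:finitepullback}. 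I do not expect a serious obstacle beyond this invocation.
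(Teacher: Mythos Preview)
Your candidate $\beta=\tfrac{1}{d}\pi_*\alpha$ is exactly the class the paper produces (unwinding the paper's construction one finds $f_{Y*}\beta'=\tfrac{1}{d}\pi_*f_{X*}f_X^*\alpha=\tfrac{1}{d}\pi_*\alpha$), and your final projection-formula step is the same as theirs. The difference is in the verification that $\alpha\cdot\pi^*[T_i]=s_id$, and here your argument has a real gap.

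You write $\pi^*[T_i]=\sum_j m_{ij}[Z_{ij}]$ with $\bigcup_j Z_{ij}=\pi^{-1}(T_i)$ and then invoke the hypothesis $\alpha\cdot Z_{ij}=s_i\deg(Z_{ij}/T_i)$ for every $j$. But the hypothesis only applies to subvarieties $Z$ \emph{lying above} $T_i$, i.e.\ dominating it. Since $\pi$ is only generically finite, the effective cycle furnished by Theorem~\ref{thrm:stricttransispullback}(2) could a priori have $(n-k)$-dimensional components $Z_{ij}$ contained in the non-finite locus of $\pi$ and mapping onto a proper subvariety of $T_i$; for such $Z_{ij}$ one has $\deg(Z_{ij}/T_i)=0$, so they drop out of $\sum_j m_{ij}\deg(Z_{ij}/T_i)=d$, but nothing in the hypotheses forces $\alpha\cdot Z_{ij}=0$. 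Theorem~\ref{thrm:stricttransispullback}(2) by itself does not rule this out: it only tells you the support of the canonical cycle before pushforward is $X\times_Y|T|$, not that every component of the resulting cycle on $X$ dominates $T_i$.

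This is precisely why the paper detours through a flattening $\pi':X'\to Y'$: there $\pi'$ is \emph{finite}, so every component of a preimage automatically dominates, and Lemma~\ref{lem:finitepullback} applies cleanly to $f_X^*\alpha$ and the strict transforms $T_i'$. The check that $f_X^*\alpha$ inherits the compatibility hypothesis over the $T_i'$ uses that any $(n-k)$-subvariety $Z'\subset X'$ dominating $T_i'$ must push forward to an $(n-k)$-subvariety of $X$ dominating $T_i$ (since $f_X(Z')$ surjects onto $T_i$, its dimension cannot drop). Your direct route would be salvageable if you supplied an argument that for general bpf $T_i$ every $(n-k)$-dimensional component of $\pi^{-1}(T_i)$ dominates $T_i$, but that requires a separate (Bertini-type) dimension count you have not given.
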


\begin{proof}
Let $\pi': X' \to Y'$ be a flattening via a birational map $f_{Y}: Y' \to Y$ with $Y'$ smooth.  Let $f_{X}: X' \to X$ denote the corresponding birational map.  For each $T_{i}$ denote its strict transform on $Y'$ by $T_{i}'$. Since $T_i$ is a component of a general member of a bpf family, $[T'_i]=f_Y^*[T_i]$ by Theorem \ref{thrm:stricttransispullback}. Note that any $(n-k)$-dimensional subvariety $Z$ on $X$ dominating some $T_{i}$ is again a component of a basepoint free family, since it is a component of the base-change of $p$ (see \cite[Lemma 5.6]{fl13}). Thus, the pullback $f_{X}^{*}[Z]$ coincides with the class of its strict transform, so that $$\pi'_{*}f_{X}^{*}[Z] = \deg(Z/T_{i}) [T_{i}']=f_Y^*\pi_*[Z].$$
Consider the pullback $f_{X}^{*}\alpha$.  It still satisfies the intersection compatibility with degree for cycles lying over the $T_{i}'$.  So Lemma \ref{lem:finitepullback} shows that $\beta' := \frac{1}{\deg\pi}\pi'_{*}(f_{X}^{*}\alpha)$ has the property that
\begin{equation*}
\pi'^{*}\beta' \cdot [Z'] = f_{X}^{*}\alpha \cdot [Z']
\end{equation*}
for any cycle $Z'$ lying above one of the $T_{i}'$.
Define $\beta = f_{Y*}\beta'$.  Then for any $Z$ lying above one of the $T_{i}$
\begin{align*}
\pi^{*}\beta \cdot [Z] & = f_{Y*}\beta' \cdot \pi_{*}[Z] \\
& = \beta' \cdot \deg(Z/T_{i}) [T_{i}'] \\
& = \beta' \cdot \pi'_{*}f_{X}^{*}[Z] \\
& = \alpha \cdot [Z].
\end{align*}
As for the final statement, we see that $\beta \cap [Y]$ is movable since it is constructed to be proportional to the pushforward of $\alpha$.
\end{proof}

Finally we prove the main theorem of this section.

\begin{thrm} \label{thrm:almostexceptionalmsc}
Let $\pi: X \to Y$ be a surjective morphism with connected fibers of smooth projective varieties  over $\mathbb{C}$ of relative dimension $e$.  Suppose $\alpha \in \Mov_{k}(X)$ for some $k \geq e$ and that $\alpha \cdot \pi^{*}H^{k-e+1} = 0$ for some ample divisor $H$ on $Y$.  Then there is a diagram
$$\xymatrix{
X' \ar[d]^{\pi'}\ar[r]^(.55){f_{X}} & X \ar[d]^{\pi} \\
Y'  \ar[r]^(.55){f_{Y}} & Y
}$$
with $f_{X}$ and $f_{Y}$ birational, $Y'$ smooth, and $\pi'$ flat and a class $\beta \in \Mov_{k-e}(Y')$ such that $f_{X*}\pi'^{*}\beta = \alpha$.
\end{thrm}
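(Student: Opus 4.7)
The plan is to reduce to a generically finite setting and then apply Lemma~\ref{lem:genfinitepullback}, with Corollary~\ref{cor:fiberspecialcase} as the main structural input. First I would flatten: choose $\pi': X' \to Y'$ flat of relative dimension $e$ with $X', Y'$ smooth, and birational maps $f_X: X' \to X$, $f_Y: Y' \to Y$ making the diagram commute. Lift $\alpha$ to $\alpha' \in \Mov_k(X')$ with $f_{X*}\alpha' = \alpha$ via \cite[Corollary 3.12]{fl14}. The class $\alpha' \cdot \pi'^*(f_Y^*H)^{k-e+1}$ lies in $\Mov_{e-1}(X')$ as movable intersected with basepoint free, and its $f_{X*}$-pushforward equals $\alpha \cdot \pi^*H^{k-e+1} = 0$; since $f_X$ is generically finite, Theorem~\ref{nullmovpush} forces this class to vanish on $X'$. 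By the projection formula combined with Proposition~\ref{numchar} and another application of Theorem~\ref{nullmovpush} (now to $f_Y$), one also deduces $\pi'_*\alpha' = 0$ on $Y'$.

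Next I would cut down to a generically finite problem. Let $A$ be a very ample divisor on $X'$ and let $\Sigma \subset X'$ be a smooth complete intersection of $e$ general sections of $|A|$, with inclusion $\imath$; by Bertini and general position, $\pi'|_\Sigma: \Sigma \to Y'$ is surjective and generically finite. Corollary~\ref{cor:movrestricttovgbpf} gives $\alpha'|_\Sigma := \imath^*\alpha' \in \Mov_{k-e}(\Sigma)$. The aim is to apply Lemma~\ref{lem:genfinitepullback} to $(\pi'|_\Sigma, \alpha'|_\Sigma)$ to produce the desired $\beta \in \Mov_{k-e}(Y')$. The hypothesis of that lemma is a constancy condition: for each basepoint free class $T$ of codimension $k-e$ on $Y'$ in a spanning basis of the sort produced by Remark~\ref{rmk:irreduciblebpfgens}, the quantity $\alpha'|_\Sigma \cdot [Z]/\deg(Z/T)$ should be independent of the component $Z$ of $(\pi'|_\Sigma)^{-1}(T)$.

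The crux of the argument, and the main obstacle, is verifying this constancy. The driving engine is Corollary~\ref{cor:fiberspecialcase}: for each basepoint free $T$ of codimension $k-e$ on $Y'$, I aim to show $\alpha' \cdot \pi'^*[T] = c_T \cdot [F]$ for some $c_T \geq 0$, where $[F]$ denotes the class of a general fiber of $\pi'$. Since $\alpha' \cdot \pi'^*[T] \in \Mov_e(X')$, Corollary~\ref{cor:fiberspecialcase} reduces this to the vanishing $\alpha' \cdot \pi'^*([T] \cdot f_Y^*H) = 0$ on $X'$. The $\pi'$-pushforward of the movable, $(e-1)$-dimensional class $\alpha' \cdot \pi'^*([T] \cdot f_Y^*H)$ vanishes by $\pi'_*\alpha' = 0$; since $\pi'$ is not generically finite, Theorem~\ref{nullmovpush} does not apply directly, and upgrading this pushforward-vanishing to an on-the-nose vanishing is the main technical difficulty. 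The strategy is to restrict to $\Sigma$, where $\pi'|_\Sigma$ is generically finite so that Theorem~\ref{nullmovpush} gives the vanishing on $\Sigma$, and then to propagate back to $X'$ by varying $\Sigma$ within a family of complete intersections, using Corollary~\ref{cor:movrestricttovgbpf} to preserve movability under restriction.

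Once the constancy condition is established, Lemma~\ref{lem:genfinitepullback} yields $\beta \in \Mov_{k-e}(Y')$ with $(\pi'|_\Sigma)^*\beta = \alpha'|_\Sigma$ on $\Sigma$, which extends to $\pi'^*\beta = \alpha'$ on $X'$ by varying the slice $\Sigma$ and appealing again to Corollary~\ref{cor:movrestricttovgbpf}. Pushing forward then gives $f_{X*}\pi'^*\beta = f_{X*}\alpha' = \alpha$, as required.
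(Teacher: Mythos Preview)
Your overall architecture matches the paper's: flatten, cut down by a complete intersection of dimension $d=\dim Y$, and invoke Lemma~\ref{lem:genfinitepullback} together with Corollary~\ref{cor:fiberspecialcase}. But two steps are genuine gaps.

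First, a minor point: one cannot in general arrange a flattening $\pi':X'\to Y'$ with both $X'$ and $Y'$ smooth. In the paper $X'$ is the singular flattening and one passes to a resolution $\psi:\widetilde{X}\to X'$; the map $\rho=\pi'\circ\psi$ is then no longer flat, and this must be tracked through the argument.

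The real issue is your treatment of the constancy condition. You want to apply Corollary~\ref{cor:fiberspecialcase} to $\pi':X'\to Y'$ and the class $\alpha'\cdot\pi'^{*}[T]$, but you have not justified that this class lies in $\Mov_{e}(X')$; pushforward of a movable class under a closed embedding is typically not movable, and ``movable intersect bpf is movable'' is not established anywhere. Worse, the hypothesis of Corollary~\ref{cor:fiberspecialcase} requires vanishing against an \emph{ample} pullback from $Y'$, whereas $f_{Y}^{*}H$ is only big and nef. Your proposed fix of restricting to $\Sigma$ and invoking Theorem~\ref{nullmovpush} cannot work: the class $\alpha'\cdot\pi'^{*}([T]\cdot A')$ has dimension $e-1$, so restricting to a codimension-$e$ slice annihilates it for trivial reasons and tells you nothing. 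The paper avoids both problems by restricting $\tilde\alpha$ not to $\Sigma$ but to the subvariety $V$ sitting over $T$ (constructed as a member of a carefully chosen bpf family on $\widetilde{X}$). Corollary~\ref{cor:movrestricttovgbpf} then gives $\tilde\alpha|_{V}\in\Mov_{e}(V)$, and after writing $f_{Y}^{*}H=A+E$ with $A$ ample on $Y'$, the inequality $\tilde\alpha\cdot[V]\cdot\rho^{*}A\preceq C\,\tilde\alpha\cdot\rho^{*}A^{k-e+1}=0$ (valid because $C\,A^{k-e}-[T]$ is bpf on $Y'$ for $C\gg0$) supplies the vanishing needed to run Corollary~\ref{cor:fiberspecialcase} on $V$ rather than on $X'$. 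Only after this does the complete intersection $M$ (your $\Sigma$) enter, to feed Lemma~\ref{lem:genfinitepullback}.
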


Note that the classes satisfying the condition $\alpha \cdot \pi^{*}H^{k-e+1}=0$ are almost exceptional: the contractibility index (cf. \S\ref{sec:contractibilityindex}) is (at most) one away from the condition for being exceptional.

\begin{proof} Let $n$ be the dimension of $X$ and $d$ the dimension of $Y$ so that $e = n-d$. Consider a flattening $\pi': X' \to Y'$ of $\pi$ with $Y'$ smooth:
$$\xymatrix{
X' \ar[d]^{\pi'}\ar[r]^(.55){f_{X}} & X \ar[d]^{\pi} \\
Y'  \ar[r]^(.55){f_{Y}} & Y
}$$
Let $\psi: \widetilde{X} \to X'$ be a resolution and let $\rho: \widetilde{X} \to Y'$ denote the composition of $\psi$ and $\pi'$.  Let $\tilde\alpha$ be a movable preimage of $\alpha$ on $\widetilde{X}$. From Theorem \ref{nullmovpush} we see that 
\begin{equation*}
\tilde\alpha\cdot (\rho^{*}f_{Y}^{*}H)^{k-e+1} = 0.
\end{equation*}
Writing $f_{Y}^{*}H = A + E$  for an effective Cartier divisor $E$ and an ample divisor $A$, and using the movability of $\tilde\alpha$ and Lemma \ref{dercycle}, we also obtain
\begin{equation*}
\tilde\alpha \cdot \rho^{*}A^{k-e+1} = 0.
\end{equation*}
Thus $\tilde\alpha$ is almost exceptional for the map $\rho$.  By pushing forward, we observe $(\psi_*\tilde\alpha)\cdot\pi'^*A^{k-e+1}=0$, so that $\psi_{*}\tilde\alpha$ is almost exceptional for the map $\pi'$.

Let $\{ p_{i}: U_{i} \to W_{i} \}$ be a finite collection of basepoint free families whose classes span $N^{k}(X)$. We can choose them such that the $U_i$'s and the general fiber of each $p_i$ are irreducible and their images on $X$ are not contracted by $\pi$ (see Remark \ref{rmk:irreduciblebpfgens}).  We will do a series of constructions to $p_{i}$; at each step, we will replace $W_{i}$ by an open subset which for simplicity we also denote by $W_{i}$.  The strict transform families $p_{i}': U_{i}' \to W_{i}$ on $X'$ are still basepoint free by Theorem \ref{thrm:stricttransispullback} and the images on $X'$ of general fibers are not contracted by $\pi'$.  Consider the diagram
$$\xymatrix{
U_{i}' \times_{Y'} X' \ar@/_.3in/[dd]_{q_{i}}\ar[d]^{\pi'_i}\ar[r]^(.55){t_{i}} & X' \ar[d]^{\pi'} \\
U_{i}' \ar[d]^{p_{i}'}\ar[r]^(.55){\pi' \circ s_{i}'} & Y'  \\
W_{i}}
$$
Note that the map $t_{i}$ is flat and the map $q_{i}$ is proper, making $U_{i}' \times_{Y'} X'$ a basepoint free family on $X'$.  We then take the pushout diagram to $\widetilde{X}$ to obtain a basepoint free family $r_{i}: \widetilde{U}_{i} \to W_{i}$ with a flat map $\widetilde{s}_{i}: \widetilde{U}_{i} \to \widetilde{X}$.  For each $i$, let $F_{i}$ be a very general fiber of $r_{i}$. Since $\pi'$ is flat equidimensional with
irreducible general fiber, Theorem \ref{thrm:stricttransispullback} implies that $F_i$ is irreducible. Since the general cycles in $p_i$ do not contract in $Y$, it follows that $\dim\widetilde{s}_i(F_i)=n-k+e$ and $\dim\rho(\widetilde{s}_i(F_i))=n-k$.

We claim that there is some $\beta \in \Mov_{k-e}(Y')$ such that $\tilde\alpha \cdot Z = \rho^{*}\beta \cdot Z$ for every $(n-k)$-cycle $Z$ contained in some $\widetilde{s}_{i}(F_{i})$.  This will conclude the proof of the theorem: since each $\widetilde{s}_i(F_{i})$ contains the strict transform of the corresponding cycle of $p_{i}$ and since the classes of these strict transforms are the pullbacks of a basis of $N^{k}(X)$, we see that $f_{X*}\pi'^{*}\beta = f_{X*}\psi_{*}\tilde\alpha = \alpha$.



With $V=\widetilde{s}_{i}(F_{i})$, we have $\dim V=(n-k+e)$, and $\dim\rho(V)=n-k$.   By Corollary \ref{cor:movrestricttovgbpf}, we see that $\tilde\alpha|_{V}$ is a movable class in $N_{e}(V)$.  Setting $r = k-e + 1$, with $\imath: V \hookrightarrow \widetilde{X}$ denoting the natural map, for $C\gg0$, we have
\begin{align*}
\imath_*\imath^*(\tilde\alpha\cdot\rho^{*}A) & = \tilde\alpha \cdot [V] \cdot \rho^{*}A^{r-(k-e)}  \\
& \preceq \tilde\alpha \cdot C \rho^{*}A^{r} = 0,
\end{align*}
so that $\tilde\alpha|_{V}$ satisfies the conditions for Corollary \ref{cor:fiberspecialcase}.  The conclusion is that $\tilde\alpha|_{V}$ is proportional to the class of a fiber of $\rho|_{V}$.  Thus $\tilde\alpha \cdot [V] \in N_{e}(X)$ is proportional to the class of a fiber of $\rho$ via some constant $b$.  


As we vary $i$, the argument above yields constants $b_{i}$ which are a priori unrelated.  Let $\{ T_{i} \}$ be the $(n-k)$-dimensional subvarieties of $Y'$ that are the images of the $V_{i}$.  Note that each $T_{i}$ is the image of a general member of a basepoint free family on $Y'$ (constructed as the flat image of the $p_{i}'$).

 Let $M$ be a very general complete intersection of ample divisors on $\widetilde{X}$ of dimension $d$.  By very generality, $\tilde\alpha|_{M}$ is movable.  The previous paragraph shows that for each $T_{i}$, intersections of $\tilde\alpha|_{M}$ against cycles with support contained in each $\rho|_{M}^{-1}(T_{i})$ are proportional via some constant $b_{i}$.  Lemma \ref{lem:genfinitepullback} shows that there is a movable class $\beta$ on $Y'$ such that $\rho^{*}\beta|_{M} \cdot Z = \tilde\alpha |_{M} \cdot Z$ for any cycle $Z$ lying above one of the $T_{i}$.  But since intersections are compatible against degree for \emph{any} subvariety of the $V_{i}$, we see that $\tilde\alpha \cdot Z = \rho^{*}\beta \cdot Z$ for any cycle $Z$ with support contained in $\widetilde{s}_{i}(F_{i})$ which has dimension $(n+e-k)$.\end{proof}

\begin{cor}\label{cor:mscalmostexc}
Let $\pi: X \to Y$ be a surjective morphism of projective varieties over $\mathbb{C}$ of relative dimension $e$.  Suppose $\alpha \in \Mov_{k}(X)$ for some $k \geq e$ and that $\alpha \cdot \pi^{*}H^{k-e+1}=0$ for some ample divisor $H$ on $Y$.  Then the MSC holds for $\alpha$. In particular the MSC is true when $e=1$.
\end{cor}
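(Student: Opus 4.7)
The plan is to invoke Theorem \ref{thrm:almostexceptionalmsc} and then verify that the flat pullback of a movable class along the flattening model lies in the cone of strongly movable contracted classes, so that pushing forward recovers $\alpha$ as a limit of strongly movable $\pi$-contracted cycles. First I would apply Theorem \ref{thrm:almostexceptionalmsc} to obtain the diagram
\begin{equation*}
\xymatrix{
X' \ar[d]_{\pi'}\ar[r]^{f_X} & X \ar[d]^{\pi} \\
Y' \ar[r]^{f_Y} & Y
}
\end{equation*}
with $f_X, f_Y$ birational, $Y'$ smooth, $\pi'$ flat, and a class $\beta \in \Mov_{k-e}(Y')$ satisfying $f_{X*}\pi'^{*}\beta = \alpha$. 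The task then becomes showing that $\pi'^{*}\beta$ lies in $\Mov_k(\pi \circ f_X) = \Mov_k(f_Y \circ \pi')$, since pushforward by $f_X$ will then land in $\Mov_k(\pi)$.

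For this step I would write $\beta$ as a limit of classes $r_i[q_i]$, where $q_i: R_i \to S_i$ are strongly movable families of effective $(k-e)$-cycles on $Y'$. For each general cycle $Z$ in the family $q_i$, form the scheme-theoretic preimage $\pi'^{-1}(Z)$; since $\pi'$ is flat and surjective of relative dimension $e$, this represents the class $\pi'^{*}[Z]$ and yields an effective $k$-cycle on $X'$. As the components of $R_i$ dominate $Y'$ and $\pi'$ is flat surjective, the family of flat preimages is strongly movable on $X'$. Moreover each cycle $\pi'^{-1}(Z)$ is contracted by $f_Y \circ \pi'$, because $f_Y(\pi'(\pi'^{-1}(Z))) = f_Y(Z)$ has dimension at most $k-e < k$. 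Taking the limit in $i$ shows $\pi'^{*}\beta \in \Mov_k(f_Y \circ \pi')$, and since $f_{X*}$ sends strongly movable $(\pi \circ f_X)$-contracted effective cycles to movable $\pi$-contracted effective cycles, pushing forward delivers $\alpha = f_{X*}\pi'^{*}\beta \in \Mov_k(\pi)$.

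For the final assertion, when $e = 1$ the hypothesis $\alpha \cdot \pi^{*}H^{k-e+1} = 0$ becomes $\alpha \cdot \pi^{*}H^{k} = 0$, which by Proposition \ref{numchar} is equivalent to $\pi_{*}\alpha = 0$; the case $k = 0$ is trivial since a nonzero $0$-cycle class has nonzero pushforward. Hence the first part applies to every movable class in $\ker \pi_{*}$. Since Theorem \ref{thrm:almostexceptionalmsc} already does the analytic and birational work, the only remaining content here is the flat-pullback construction of strongly movable families, and that is routine given flatness of $\pi'$; so I do not anticipate any serious obstacle beyond keeping the bookkeeping between the original morphism $\pi$ and the flattened morphism $\pi'$ straight.
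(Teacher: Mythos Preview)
Your approach is essentially the same as the paper's, but there is one genuine gap: Theorem \ref{thrm:almostexceptionalmsc} requires that $X$ and $Y$ are \emph{smooth} and that $\pi$ has \emph{connected fibers}, whereas the corollary is stated for arbitrary projective varieties.  You invoke the theorem immediately without bridging this.  The paper handles this by first noting that the MSC is known for generically finite maps (Theorem \ref{nullmovpush}), and then arguing as in Remark \ref{bothsmconnC} to replace $\pi$ by a model $\pi':X'\to Y'$ with $X',Y'$ smooth and $\pi'$ having connected fibers, lifting $\alpha$ to a movable class $\alpha'$ still satisfying the almost-exceptional intersection condition.  This reduction is routine but cannot be skipped.

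Once that reduction is made, your argument matches the paper's: write $\beta$ as a limit of classes of movable cycles $Z_i$, use flatness of $\pi'$ so that $\pi'^*[Z_i]=[\pi'^{-1}Z_i]$ is an effective movable $\pi'$-contracted cycle (the paper cites \cite[Lemma 3.6]{fl14}), and push forward by $f_X$.  One terminological quibble: you claim the flat preimage family is \emph{strongly} movable, but $R_i\times_{Y'}X'$ need not be irreducible; it is only \emph{strictly} movable in general.  This is harmless for the conclusion since strictly movable classes lie in $\Mov_k$, but it is worth saying correctly.  Your treatment of the $e=1$ case via Proposition \ref{numchar} is fine (and the $k=0$ aside is unnecessary since the hypothesis $k\geq e=1$ already excludes it).
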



\begin{proof}
Since we know the MSC for generically finite maps, arguing as in Remark \ref{bothsmconnC} we may assume that $X$ and $Y$ are smooth and $\pi$ has connected fibers.  Applying Theorem \ref{thrm:almostexceptionalmsc} we obtain a smooth birational model $Y'$ and a class $\beta \in \Mov_{k-e}(Y')$.  Let $\{ Z_{i} \}_{i=1}^{\infty}$ be a sequence of strictly movable cycles whose classes limit to $\beta$.  Since $\pi'$ is flat, each $\pi'^{*}[Z_{i}] = [\pi'^{-1}Z_{i}]$ is the class of an effective $\pi'$-contracted cycle which is strictly movable by \cite[Lemma 3.6]{fl14}.  The image of each $\pi'^*Z_i$ under $(f_{X})_*$ is a movable $\pi$-contracted cycle, and the corresponding classes limit to $\alpha$.
\end{proof}

In fact, we can weaken the hypotheses of Theorem \ref{thrm:almostexceptionalmsc} without changing the proof.  We now explain this stronger version.

\begin{rmk}\label{rmk:weakext}We have not used the full strength of the movability condition on $\alpha$
in this section. In Lemmas \ref{lem:finitepullback} and \ref{lem:genfinitepullback}, one can replace movability with any notion invariant under pushforward by surjective morphisms. Theorem \ref{thrm:almostexceptionalmsc} uses three properties of movability:

\begin{enumerate}[i)]
\item Movable classes admit movable preimages by surjective maps.
\item Corollary \ref{cor:movrestricttovgbpf}.
\item Movable curves are movable in the sense of \cite{bdpp04}.
\end{enumerate}
\end{rmk}

Consider the following partial substitute:
\begin{defn}Say that a class $\alpha\in\Eff_k(X)$ is \emph{weakly movable} if there exists a sequence $\{V_i\}_{i=1}^{\infty}$ of effective $k$-cycles on $X$ such that 
$\alpha=\lim_i[V_i]$, and for each (reducible) divisor $E$ on $X$, infinitely many of the $V_i$'s meet $|E|$ properly.
\end{defn}

The following remark explains how the analogue of Theorem \ref{thrm:almostexceptionalmsc} for weakly movable classes is proved.

\begin{rmk}\label{rmk:weakprop}Weakly movable classes are invariant under pushforward by surjective morphisms and have the following additional characteristics:
\begin{enumerate}[i)]
\item Extremal classes in $\Eff_k(X)$ that are not the pushforward of any pseudoeffective class on a (reducible) divisor on $X$ are weakly movable and admit weakly movable preimages by surjective \emph{generically finite} maps.
\item The analogue of Corollary \ref{cor:movrestricttovgbpf} holds for weakly movable classes when the basepoint free family has irreducible general fiber that is not contracted by the map to $X$, i.e. it produces a nonzero class.
\item Weakly movable curves are movable in the sense of \cite{bdpp04}.
\end{enumerate}
(If $\pi:X\to Y$ is surjective, $\alpha\in\Eff_k(X)$ is weakly movable, and $\{V_i\}$ is a sequence of cycles that verifies its movability, then $\{\pi_*V_i\}$ is a sequence of cycles that verifies the weak movability of $\pi_*\alpha$. 

If $\alpha\in\Eff_k(Y)$ is extremal and not pushed from a divisor on $Y$, then there exists a sequence of cycles $V_i$ on $Y$ having irreducible support, and such that every subsequence is dense in $Y$. The sequence $\{V_i\}$ verifies the weak movability of $\alpha$. Furthermore, any extremal pseudoeffective preimage $\beta\in\Eff_k(X)$ with $\pi_*\beta=\alpha$ is likewise not pushed from a divisor on $X$. Therefore $\beta$ is also weakly movable and this proves i).

The justification of ii) is a standard relative Hilbert scheme argument. Let $\alpha$ be weakly movable and let $V_i$ be $k$-cycles that verify its weak movability. Let $p:U\to W$ be a projective morphism with irreducible general fiber to $W$ integral and let $s:U\to X$ be an equidimensional flat morphism. For very general $w\in W$, the fiber $U_w$ sits in general position relative to all $V_i$'s. Consider the relative Hilbert scheme $\mathcal H_j$ parameterizing pairs $(w,D_w)$, where $D_w$ is a divisor on $U_w$ that contains a component of $s^{-1}V_i\cap U_w$ for all $i\geq j$. If the weak movability of restrictions (computed as proper intersections in the sense of \cite[\S7]{fulton84}) fails, then by the uncountability of the base field, some component of some $\mathcal H_j$ dominates $W$. One then uses the universal family over an appropriate subvariety of this component to construct a divisor whose image in $X$ is a divisor that meets all but finitely many of the $V_i$'s improperly.

It is an immediate consequence of the definition that a weakly movable curve class has nonnegative intersection with any effective Cartier divisor. Then iii) follows by \cite{bdpp04}.) 
\end{rmk}

\begin{cor} \label{cor:scfourfolds}
The Strong Conjecture holds for surjective morphisms from fourfolds to threefolds over $\mathbb C$. More generally, it holds for almost exceptional classes on fourfolds regardless of the target.\end{cor}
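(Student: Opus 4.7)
The first assertion reduces to the second. For $\pi\colon X\to Y$ with $X$ a fourfold and $Y$ a threefold (so $e=1$), the cases $k\in\{0,4\}$ of the PC are trivial, the cases $k\in\{1,3\}$ are the curve and divisor cases of \cite{djv13}, and for $k=2$ Proposition \ref{numchar} gives that $\pi_{*}\alpha=0$ is equivalent to $\alpha\cdot\pi^{*}H^{2}=0$; since $k-e+1=2$, this is precisely the almost exceptional condition, so the second assertion covers this case. The same case decomposition by $k$ shows that only $k=2$ is non-trivial in the second assertion as well.

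For $k=2$ the set of almost exceptional classes forms a face of $\Eff_{2}(X)$: if $\alpha=\alpha_{1}+\alpha_{2}$ with $\alpha_{i}\in\Eff_{2}(X)$ and $\alpha$ is almost exceptional, then the two pseudo-effective classes $\alpha_{i}\cdot\pi^{*}H^{k-e+1}$ sum to $0$ and each must vanish. Applying Lemma \ref{irraprox} to this face, every almost exceptional $\alpha\in\Eff_{2}(X)$ is a limit of nonnegative combinations of extremal rays of the face; these are extremal in $\Eff_{2}(X)$ by the face property, and since $\Eff_{2}(\pi)$ is closed and convex, it suffices to treat extremal $\alpha$.

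For such $\alpha$, apply the Zariski decomposition of \cite{fl15}: $\alpha=P(\alpha)+N(\alpha)$ with $P\in\Mov_{2}(X)$ and $N\in\Eff_{2}(X)$, both almost exceptional by the salience argument above. Extremality of $\alpha$ forces either $P(\alpha)=\alpha$ (movable case) or $P(\alpha)=0$ (rigid case). In the movable case, Corollary \ref{cor:mscalmostexc} yields $\alpha\in\Mov_{2}(\pi)\subset\Eff_{2}(\pi)$. In the rigid case, Remark \ref{rmk:weakprop} provides a second dichotomy: either $\alpha$ is pushed forward from a (reducible) divisor $W\subset X$, or $\alpha$ is weakly movable. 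If $\alpha=\imath_{*}\beta$ with $\imath\colon W\hookrightarrow X$, then $\dim W\leq 3$, and the SC in dimension $\leq 3$ (known from \cite{djv13} together with Proposition \ref{thrm:birationalnandn-1}) gives $\beta\in\Eff_{2}(\pi\circ\imath)$, hence $\alpha\in\Eff_{2}(\pi)$ by pushforward. If $\alpha$ is weakly movable, apply the weakly movable extension of Theorem \ref{thrm:almostexceptionalmsc} (outlined in Remarks \ref{rmk:weakext} and \ref{rmk:weakprop}) to obtain $\alpha=f_{X*}\pi'^{*}\beta$ for some weakly movable $\beta\in\Eff_{k-e}(Y')$; approximating $\beta=\lim_{m}[V_{m}]$ by effective cycles on $Y'$ and using the flatness of $\pi'$, the classes $f_{X*}[\pi'^{-1}V_{m}]$ are effective $\pi$-contracted cycles on $X$ with limit $\alpha$, so $\alpha\in\Eff_{2}(\pi)$.

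The main obstacle is the weakly movable extension of Theorem \ref{thrm:almostexceptionalmsc}, which must be verified by substituting weak movability for strict movability throughout that theorem's proof, as indicated in Remarks \ref{rmk:weakext} and \ref{rmk:weakprop}. Granted this substitution, the rest of the argument is a structural synthesis of the exceptional-class reductions of \S\ref{sec:excclasses} with the movable results of \S\ref{movclasssection}.
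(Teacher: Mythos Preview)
Your argument is essentially the paper's, with one unnecessary detour and one small gap.

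The Zariski decomposition step is redundant. The dichotomy you invoke afterward from Remark~\ref{rmk:weakprop}(i) --- an extremal class is either pushed from a divisor or weakly movable --- applies directly to any extremal $\alpha$, not just to those with $P(\alpha)=0$. Since movable implies weakly movable, your ``movable case'' is already subsumed by the weakly movable branch, and Corollary~\ref{cor:mscalmostexc} is just the movable instance of the same extension of Theorem~\ref{thrm:almostexceptionalmsc}. The paper skips straight to the divisor/weakly-movable dichotomy.

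The gap: you never isolate the generically finite case $e=0$. Your final step asserts that $f_{X*}[\pi'^{-1}V_m]$ is $\pi$-contracted, but the image under $\pi$ has dimension at most $\dim V_m = k-e$, which is strictly less than $k$ only when $e\geq 1$. For $e=0$ this fails. The paper handles $e=0$ separately: there $\pi_*\alpha=0$ already makes $\alpha$ $\pi$-exceptional (Example~\ref{exmple:exceptionalforbirational}), so Theorem~\ref{exceptionalclassesthrm} pushes $\alpha$ from a proper subscheme and one finishes by the SC for threefolds. Equivalently, in your framework, the last part of Lemma~\ref{exceptionalintersectsnonpsef} together with Proposition~\ref{prop:pushforwardfrome} shows that for $e=0$ every extremal $\pi$-contracted class is pushed from a divisor, so your weakly movable branch is vacuous there --- but you need to say so.
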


\begin{proof}
Let $\pi:X\to Y$ be a morphism from a fourfold, and let $\alpha\in\Eff_k(X)$ satisfy $\pi_*\alpha=0$. It is enough to treat the case of surface classes ($k=2$), since curves and divisors are covered by \cite[Theorem 1.4]{djv13}. By Remark \ref{mindim} and the reduction steps of \cite{djv13} we may assume that $\pi$ is surjective and furthermore that its relative dimension is $e\in\{0,1,2\}$. If $e=0$, i.e. $\pi$ is generically finite, then $\alpha$ is exceptional, and hence pushed forward from a subscheme of $X$ by Theorem \ref{exceptionalclassesthrm}.   We obtain the Strong Conjecture for $\alpha$ from the Strong Conjecture for threefolds. 

When $e=1$, the condition $\pi_*\alpha=0$ is equivalent to $\alpha\cdot\pi^*h^{k-e+1}=\alpha\cdot\pi^*h^2=0$ for some $h$ ample on $Y$. In particular $\alpha$ is almost exceptional. We may assume that $\alpha$ is extremal and is not pushed from any divisor on $X$, otherwise we reduce to the case when $\alpha$ is a divisor class on a threefold.
Then $\alpha$ is weakly movable and Remarks \ref{rmk:weakext} and \ref{rmk:weakprop} show that the proof of Theorem \ref{thrm:almostexceptionalmsc} carries through. 

The same argument works when $e=2$ if $\alpha$ is almost exceptional. 
\end{proof}

\begin{rmk}The only unsettled case of the Strong Conjecture in dimension $4$ over $\mathbb C$ is that of a surjective morphism $\pi:X\to Y$ to a surface and of classes $\alpha\in\Eff_2(X)$ with $\alpha\cdot\pi^*h^2=0$, but $\alpha\cdot\pi^*h\neq 0$, where $h$ is an ample divisor class on $Y$.
\end{rmk}

\begin{ques} Are weakly movable classes movable?
\end{ques}

\noindent As mentioned above, this is true for curves by \cite{bdpp04} (which holds in arbitrary characteristic; see \cite[Section 2.2]{fl14}).  It is also true for divisors: using Remark \ref{rmk:weakprop}.(ii) one reduces to the case of smooth varieties.  But on smooth varieties the weakly movable condition for $L$ implies that $N_{\sigma}(L) = 0$ so that $L$ is movable (see \cite{nakayama04} and \cite{mustata13}).  

\nocite{*}
\bibliographystyle{amsalpha}
\bibliography{swshort}

\providecommand{\bysame}{\leavevmode\hbox to3em{\hrulefill}\thinspace}
\providecommand{\MR}{\relax\ifhmode\unskip\space\fi MR }
\providecommand{\MRhref}[2]{%
  \href{http://www.ams.org/mathscinet-getitem?mr=#1}{#2}
}
\providecommand{\href}[2]{#2}
\begin{thebibliography}{BDPP13}

\bibitem[BDPP13]{bdpp04}
S{\'e}bastien Boucksom, Jean-Pierre Demailly, Mihai P{\u{a}}un, and Thomas
  Peternell, \emph{The pseudo-effective cone of a compact {K}\"ahler manifold
  and varieties of negative {K}odaira dimension}, J. Algebraic Geom.
  \textbf{22} (2013), no.~2, 201--248.

\bibitem[CC15]{cc15}
Dawei Chen and Izzet Coskun, \emph{Extremal higher codimension cycles on moduli
  spaces of curves}, Proc. London Math. Soc \textbf{111} (2015), no.~1,
  181--204.

\bibitem[DELV11]{delv11}
Olivier Debarre, Lawrence Ein, Robert Lazarsfeld, and Claire Voisin,
  \emph{Pseudoeffective and nef classes on abelian varieties}, Compos. Math.
  \textbf{147} (2011), no.~6, 1793--1818.

\bibitem[dJ96]{dejong96}
Aise~Johan de~Jong, \emph{Smoothness, semi-stability and alterations}, Inst.
  Hautes \'Etudes Sci. Publ. Math. (1996), no.~83, 51--93.

\bibitem[DJV13]{djv13}
Olivier Debarre, Zhi Jiang, and Claire Voisin, \emph{Pseudo-effective classes
  and pushforwards}, Pure Appl. Math. Q. \textbf{9} (2013), no.~4, 643--664.

\bibitem[FL13]{fl14}
Mihai Fulger and Brian Lehmann, \emph{Zariski decompositions of numerical cycle
  classes}, 2013, arXiv:1310.0538 [math.AG], Journal of Algebraic Geometry, to
  appear.

\bibitem[FL14]{fl13}
\bysame, \emph{Positive cones of dual cycle classes}, 2014, arXiv:1408.5154
  [math.AG].

\bibitem[FL15]{fl15}
\bysame, \emph{Kernels of numerical pushforwards}, 2015, in preparation.

\bibitem[Ful84]{fulton84}
William Fulton, \emph{Intersection theory}, Ergebnisse der Mathematik und ihrer
  Grenzgebiete (3) [Results in Mathematics and Related Areas (3)], vol.~2,
  Springer-Verlag, Berlin, 1984.

\bibitem[Kle74]{kleiman74}
Steven~L. Kleiman, \emph{The transversality of a general translate}, Compos.
  Math. \textbf{28} (1974), no.~3, 287--297.

\bibitem[Laz04]{lazarsfeld04}
Robert Lazarsfeld, \emph{Positivity in algebraic geometry}, Ergebnisse der
  Mathematik und ihrer Grenzgebiete. 3. Folge. A Series of Modern Surveys in
  Mathematics [Results in Mathematics and Related Areas. 3rd Series. A Series
  of Modern Surveys in Mathematics], vol.~48, Springer-Verlag, Berlin, 2004,
  Classical setting: line bundles and linear series.

\bibitem[Leh11]{lehmann11}
Brian Lehmann, \emph{Numerical triviality and pullbacks}, 2011, arXiv:1109.4382
  [math.AG], J. Pure Appl. Algebra, to appear.

\bibitem[Leh13]{lehmann13}
\bysame, \emph{Geometric characterizations of big cycles}, 2013,
  arXiv:1309.0880 [math.AG].

\bibitem[Mum68]{mumford68}
David Mumford, \emph{Rational equivalence of {$0$}-cycles on surfaces}, J.
  Math. Kyoto Univ. \textbf{9} (1968), 195--204.

\bibitem[Mus13]{mustata13}
Mircea Musta{\c{t}}{\u{a}}, \emph{The non-nef locus in positive
  characteristic}, A celebration of algebraic geometry, Clay Math. Proc.,
  vol.~18, Amer. Math. Soc., Providence, RI, 2013.

\bibitem[Nak04]{nakayama04}
Noboru Nakayama, \emph{Zariski-decomposition and abundance}, MSJ Memoirs,
  vol.~14, Mathematical Society of Japan, Tokyo, 2004.

\bibitem[Pet12]{peternell12}
Thomas Peternell, \emph{Varieties with generically nef tangent bundles}, J.
  Eur. Math. Soc. (JEMS) \textbf{14} (2012), no.~2, 571--603.

\bibitem[Ro{\u\i}72]{roitman72}
A.~A. Ro{\u\i}tman, \emph{Rational equivalence of zero-dimensional cycles},
  Mat. Sb. (N.S.) \textbf{89(131)} (1972), 569--585, 671.

\bibitem[Sch15]{luca15}
Luca Schaffler, \emph{On the cone of effective 2-cycles on
  $\overline{M}_{0,7}$}, 2015, arXiv:1501.01736 [math.AG], to appear in the
  European Journal of Mathematics.

\end{thebibliography}

\end{document}